\documentclass[11pt]{article}

\usepackage{amsmath,amssymb,amsfonts,amsthm,enumitem,xy,pstricks,pst-node,comment,mathrsfs}
\xyoption{all}

\usepackage{footnote}
\usepackage[symbol]{footmisc}


\setcounter{topnumber}{1}

\addtolength{\textheight}{1.25in}
\addtolength{\topmargin}{-.62in}
\addtolength{\textwidth}{1.5in}
\addtolength{\oddsidemargin}{-.75in}
\addtolength{\evensidemargin}{-.75in}

\numberwithin{equation}{section}
\newtheorem{thm}{Theorem}[section] 
\newtheorem{prp}[thm]{Proposition}
\newtheorem{lmm}[thm]{Lemma}   
\newtheorem{crl}[thm]{Corollary} 

\theoremstyle{definition}
\newtheorem{dfn}[thm]{Definition}
\newtheorem{rmk}[thm]{Remark}
\newtheorem{eg}[thm]{Example}

\def\BE#1{\begin{equation}\label{#1}}
\def\EE{\end{equation}}
\def\e_ref#1{(\ref{#1})}

\def\ov#1{\overline{#1}}
\def\ti#1{\widetilde{#1}}
\def\wt#1{\widetilde{#1}}
\def\wh#1{\widehat{#1}}
\def\sf#1{\textsf{#1}}
\def\smsize#1{\begin{small}#1\end{small}}

\def\lra{\longrightarrow}

\def\xlra#1{\xrightarrow{#1}}

\def\eset{\emptyset}
\def\i{\infty}

\def\al{\alpha}

\def\de{\delta}

\def\io{\iota}

\def\si{\sigma}
\def\vp{\varphi}

\def\cA{\mathcal A}
\def\cB{\mathcal B}

\def\D{\mathcal D}

\def\cH{\mathcal H}
\def\sH{\mathscr H}

\def\cI{\mathcal I}
\def\R{\mathbb R}
\def\cS{\mathcal S}
\def\Z{\mathbb Z}

\def\De{\Delta}

\def\Bd{\textnormal{Bd}~\!}

\def\nd{\textnormal{d}}
\def\CH{\textnormal{CH}}

\def\Hom{\textnormal{Hom}}
\def\id{\textnormal{id}}
\def\Id{\textnormal{Id}}
\def\Im{\textnormal{Im}~\!}
\def\Int{\textnormal{Int}~\!}

\def\PD{\textnormal{PD}}

\def\rk{\textnormal{rk}~\!}
\def\sd{\textnormal{sd}\,}
\def\St{\textnormal{St}}
\def\sign{\textnormal{sign}~\!}
\def\supp{\textnormal{supp}}
\def\top{\textnormal{top}}

\def\cal{\mathcal}
\def\Bbb{\mathbb}

\def\ale{\aleph}

\def\Hcl{H^{\textnormal{lf}}}
\def\ovHcl{\ov{H}^{\textnormal{lf}}}
\def\cHcl{{\mathcal H}^{\textnormal{cl}}}
\def\Scl{S^{\textnormal{lf}}}
\def\ovScl{\ov{S}^{\textnormal{lf}}}
\def\prScl{S'^{\textnormal{lf}}}

\def\hb{\hbar}
\def\prt{\partial}

\begin{document}

\title{Pseudocycles for Borel-Moore Homology} 
\author{Spencer Cattalani\thanks{Partially supported by NSF grant DMS 1901979}
~and Aleksey Zinger\thanks{Partially supported by NSF grant DMS 1901979}}
\date{\today}
\maketitle

\begin{abstract}
\noindent
Pseudocycles are geometric representatives for integral homology classes
on smooth manifolds that have proved useful in particular for defining 
gauge-theoretic invariants.
The Borel-Moore homology  is often a more natural object
to work with in the case of non-compact manifolds than the usual homology.
We define weaker versions of the standard notions of pseudocycle and
pseudocycle equivalence and then describe a natural isomorphism between 
the set of equivalence classes of these weaker pseudocycles and
the Borel-Moore homology.
We also include a direct proof of a Poincar\'e Duality between the singular cohomology
of an oriented manifold and its Borel-Moore homology.
\end{abstract}

\thispagestyle{empty}
\tableofcontents

\section{Introduction}

\subsection{Main theorem}

\noindent
Constructions of some important gauge-theoretic invariants involve representing
cohomology classes on a smooth manifold~$X$ geometrically.
As the submanifolds in~$X$ and embedded cobordisms between them do not generally suffice
for  representing the singular homology $H_*(X;\Z)$ of~$X$,
\sf{pseudocycles} have been used as a suitable replacement in the case of compact manifolds.
For example, pseudocycles are central to the constructions of Gromov-Witten invariants
for compact semi-positive symplectic manifolds
in \cite[Section~7.1]{MS12} and \cite[Section~1]{RT}.\\

\noindent
The \sf{Borel-Moore} \sf{homology} $\Hcl_*(X;\Z)$ of a topological space~$X$,
also known as the homology with closed supports and the homology based on locally finite chains,
is introduced from a sheaf-theoretic perspective in~\cite{BM60}.
If $X$ is compact, $\Hcl_*(X;\Z)$ is just the usual singular homology $H_*(X;\Z)$.
On the other hand, a closed oriented $k$-submanifold~$M$ in a manifold~$X$ determines
a class $[M]_X$ in $\Hcl_k(X;\Z)$, even if $M$ is not compact.
If $X$ is an oriented $n$-manifold, $\Hcl_k(X;\Z)$ is Poincar\'e dual to 
the singular cohomology $H^{n-k}(X;\Z)$ with respect to their pairing 
with the compactly supported cohomology $H_c^k(X;\Z)$.
The purpose of the present paper is to provide an analogue of pseudocycles for
the Borel-Moore homology of a non-compact manifold~$X$ and a geometric way of representing
all cohomology of~$X$.
As indicated in \cite[Section~1]{FZ1}, \cite[Section~1]{FZ2}, and \cite[Sections~3a,3b,5a.2,5c]{Wilkins},
classes on non-compact manifolds can be relevant even if one is interested
only in compact manifolds.\\

\noindent
A subset~$Z$ of a manifold~$X$  is \sf{of dimension at most~k}, 
which we write~as
$$\dim\,Z\le k\,,$$
if there exists a \hbox{$k$-dimensional} manifold~$Y$ and 
a smooth map $h\!:Y\!\lra\!X$ such that \hbox{$Z\!\subset\!h(Y)$}.
If \hbox{$f\!:M\!\lra\!X$} is a continuous map between topological spaces,
\sf{the boundary of~$f$} is the~subspace
$$\Bd f\equiv \bigcap_{K\subset M\text{~cmpt}}\!\!\!\!\!\!\ov{f(M\!-\!K)}\subset X,$$
where the overline $\ov{~\cdot~}$ denotes the closure in~$X$.
If $A\!\subset\!X$ is a closed compact subset disjoint from~$\Bd f$,
then $f^{-1}(A)\!\subset\!M$ is compact.
A continuous map~$f$ as above is called \sf{proper} 
if $f^{-1}(A)\!\subset\!M$ is compact for every
compact subset $A\!\subset\!X$.
If $\Bd f\!=\!\eset$ and $X$ is Hausdorff, then $f$ is proper.
If $f$ is proper and $X$ is locally compact, 
i.e.~every point of~$X$ has an arbitrarily small precompact open neighborhood,
then $\Bd f\!=\!\eset$.
If $f\!:M\!\lra\!X$ is a continuous map from a compact space to a Hausdorff one,
then $f$ is proper.

\begin{dfn}\label{BMpseudo_dfn} 
Let $X$ be a smooth manifold.
\begin{enumerate}[label=(\alph*),leftmargin=*]	

\item A smooth map $f\!:M\!\lra\!X$ is a \sf{Borel-Moore $k$-pseudocycle} if
$M$ is an oriented $k$-manifold and $\dim\Bd f\!\le\!k\!-\!2$.

\item\label{BMequiv_it} Two Borel-Moore $k$-pseudocycles 
$f_0\!:M_0\!\lra\!X$ and $f_1\!:M_1\!\lra\!X$
are \sf{equivalent} if there exist a smooth oriented manifold~$\wt{M}$,
a smooth map $\wt{f}\!:\wt{M}\!\lra\!X$, and
closed subsets $Y_0\!\subset\!M_0$ and $Y_1\!\subset\!M_1$ such~that
\begin{gather*}
\dim Y_0,\dim Y_1\le k\!-\!1, \quad \prt\wt{M}=(M_1\!-\!Y_1)\!\sqcup\!-(M_0\!-\!Y_0),\\
\dim\Bd\wt{f}\le k\!-\!1,\quad 
\wt{f}|_{M_0-Y_0}=f_0|_{M_0-Y_0}, \quad \wt{f}|_{M_1-Y_1}=f_1|_{M_1-Y_1}.
\end{gather*}

\item The \sf{$k$-th Borel-Moore pseudocycle group} is the set $\cHcl_*(X)$ of 
equivalence classes of Borel-Moore $k$-pseudocycles to~$X$ with the addition induced
by the disjoint union.

\end{enumerate}\end{dfn}

\begin{eg}\label{BMpseudo_eg}
If $f\!:M\!\lra\!X$ is a Borel-Moore $k$-pseudocycle and $Z\!\subset\!X$ is a closed subset 
of dimension at most $k\!-\!2$, then $f|_{M-f^{-1}(Z)}$ is also a Borel-Moore $k$-pseudocycle 
(with $\Bd f|_{M-f^{-1}(Z)}$ contained in~$(\Bd f)\!\cup\!Z$)  and
$$\wt{f}\!:\wt{M}\!\equiv\![0,1]\!\times\!M\!-\!\{1\}\!\times\!f^{-1}(Z)\lra X, \quad
\wt{f}(t,p)=f(p),$$
is a Borel-Moore pseudocycle equivalence between~$f$ and~$f|_{M-f^{-1}(Z)}$.
\end{eg}

\begin{thm}\label{main_thm}
Let $X$ be a smooth manifold. 
There exist homomorphisms of graded abelian groups
\BE{mainthm_e}\Psi_*\!: \Hcl_*(X;\Z)\lra\cHcl_*(X)
\quad\hbox{and}\quad
\Phi_*\!: \cHcl_*(X)\lra \Hcl_*(X;\Z)\EE
that are natural with respect to proper maps 
such that \hbox{$\Phi_*\!\circ\!\Psi_*\!=\!\Id$} and \hbox{$\Psi_*\!\circ\!\Phi_*\!=\!\Id$}.
\end{thm}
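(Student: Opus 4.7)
The plan is to carry over, at the chain level, the classical proof that pseudocycles compute singular homology, with locally finite chains replacing finite ones. Two ingredients recur throughout: an oriented $k$-manifold $M$ carries a canonical Borel-Moore fundamental class $[M]\!\in\!\Hcl_k(M;\Z)$ represented by a smooth locally finite triangulation compatible with the orientation, and a closed subset $Z\!\subset\!X$ of dimension at most $k\!-\!2$ does not obstruct the formation of a Borel-Moore $k$-class in the sense that the restriction map $\Hcl_k(X;\Z)\!\lra\!\Hcl_k(X\!-\!Z;\Z)$ is an isomorphism. This dimensional vanishing is the key technical lemma underlying everything that follows.

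To construct $\Phi_*$, I would take a Borel-Moore $k$-pseudocycle $f\!:\!M\!\lra\!X$ and a smooth oriented triangulation representing $[M]$. Because $f$ need not be proper, I cannot directly push the triangulation to~$X$; but for every open neighborhood $U\!\supset\!\Bd f$, the restriction of $f$ to $M\!-\!f^{-1}(\ov U)$ is proper into $X\!-\!\ov U$ and produces a class $\al_U\!\in\!\Hcl_k(X\!-\!\ov U;\Z)$ by pushing forward the fundamental class of $M\!-\!f^{-1}(\ov U)$. The classes $\al_U$ are compatible under shrinking $U$, and the dimensional vanishing above together with the long exact sequence of the pair $(X,X\!-\!\ov U)$ forces the restriction $\Hcl_k(X;\Z)\!\lra\!\Hcl_k(X\!-\!\ov U;\Z)$ to be an isomorphism once $U$ is a sufficiently thin neighborhood of $\Bd f$. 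This produces a canonical class $\Phi_*[f]\!\in\!\Hcl_k(X;\Z)$; independence of the triangulation and of the pseudocycle representative is checked by applying the same construction to the equivalence datum $(\ti M,\ti f)$ of Definition~\ref{BMpseudo_dfn}\ref{BMequiv_it}, whose boundary bound $\dim\Bd\ti f\!\le\!k\!-\!1$ is exactly what is needed for the relative class to vanish in degree~$k$.

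For $\Psi_*$, given a locally finite singular cycle $c\!\in\!\Scl_k(X;\Z)$, I would subdivide so that each simplex is smooth and the combinatorial link of every codimension-two face is a single oriented circle of simplices, as in the pseudomanifold-resolution step in the compact case; the subdivisions must be chosen cofinally with respect to a compact exhaustion of $X$ to preserve local finiteness. Smoothing the resulting oriented $k$-pseudomanifold away from its codimension-two skeleton produces a smooth oriented $k$-manifold~$M$ and a smooth map $f\!:\!M\!\lra\!X$ whose image of the unsmoothed locus has dimension at most $k\!-\!2$, so $\dim\Bd f\!\le\!k\!-\!2$. The chain-level constructions underlying $\Phi_*$ and $\Psi_*$ differ only by an explicit locally finite chain homotopy, which yields both $\Phi_*\!\circ\!\Psi_*\!=\!\Id$ and $\Psi_*\!\circ\!\Phi_*\!=\!\Id$; naturality under proper maps is transparent from the chain-level formulas, since proper pushforward commutes with both triangulation and simplicial resolution.

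The main obstacle is the uniform chain-level bookkeeping near $\Bd f$ and near infinity in~$X$ simultaneously: unlike in the compact case, both pseudocycles and locally finite chains can escape to infinity, so subdivisions and smoothings must be arranged to be locally finite in $M$ and in $X$ at the same time, and the dimensional vanishing of $\Hcl_k(X,X\!-\!Z;\Z)$ for $\dim Z\!\le\!k\!-\!2$ must be invoked in precisely the right way. It is this dimensional vanishing which allows the ill-defined pushforward across the non-properness locus $\Bd f$ to be canonically extended to a genuine class in $\Hcl_k(X;\Z)$, and analogously allows the boundary of a smoothed pseudomanifold to be cleanly extracted as a codimension-two subset; without this single uniform mechanism, the two constructions would not fit into a common framework.
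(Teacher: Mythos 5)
Your overall architecture is the same as the paper's ($\Phi_*$ by pushing forward the fundamental class of a properly-mapped part of $M$ and working relative to a small neighborhood of $\Bd f$; $\Psi_*$ by gluing the simplices of a locally finite cycle along cancelling codimension-one faces and deleting the codimension-two skeleton), but your key mechanism has a genuine gap. You assert that for a sufficiently thin neighborhood $U$ of $\Bd f$ the restriction $\Hcl_k(X;\Z)\!\lra\!\Hcl_k(X\!-\!\ov{U};\Z)$ is an isomorphism, ``forced'' by the long exact sequence of $(X,X\!-\!\ov{U})$ and the dimensional vanishing for closed sets of dimension at most $k\!-\!2$. The third term of that sequence is governed by the closed set $\ov{U}$ itself, which is $n$-dimensional no matter how thin $U$ is, not by $\Bd f$; thinness gives no bound on its locally finite homology, and locally finite singular homology does not pass to a limit over shrinking neighborhoods, so the (itself unproven, for the possibly wild set $\Bd f$, in the singular locally-finite framework of the theorem) statement about $X\!-\!Z$ cannot be transferred to $X\!-\!\ov{U}$. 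Moreover, restriction of a locally finite chain on $X$ to an open subset is not defined by discarding simplices, so even the map you propose to invert requires justification. The paper circumvents all of this: Proposition~\ref{neighb2_prp} constructs a very particular $U$ (a locally finite union of open stars of a triangulation transverse to a map whose image covers $\Bd f$) and proves the chain-level vanishing $\Hcl_{\{U\};l}(X;\Z)\!=\!0$ for $l\!>\!k\!-\!2$ --- a statement about chains of $X$ lying in $U$, not about $\Hcl_*(\ov{U})$ or $\Hcl_*(X\!-\!\ov{U})$ --- and then defines $[f]_{X;U}\!=\!f_*[\ov{V}]$ in the relative group $\ovHcl_k(X,\{U\};\Z)$ for a submanifold with boundary $\ov{V}\!\supset\!M\!-\!f^{-1}(U)$ with $f|_{\ov{V}}$ proper into all of~$X$ (Lemma~\ref{proper_lmm}\ref{properext_it}), never invoking homology of complements of neighborhoods.

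The identity $\Psi_*\!\circ\!\Phi_*\!=\!\Id$ is also not addressed: saying the two constructions ``differ by an explicit locally finite chain homotopy'' does not yield the required geometric statement that a Borel-Moore pseudocycle with $[f]\!=\!0$ is equivalent, as a pseudocycle, to the empty one. In the paper this is Lemma~\ref{isom_l2}, the most delicate step: it uses a nested sequence of neighborhoods $U_r$ of $\Bd f$ with $\bigcap_r U_r\!=\!\Bd f$, each satisfying the chain-level vanishing, bounding chains supported in $U_{r-1}$, and an infinite gluing of these bounding chains into a single pseudocycle equivalence whose boundary set is controlled by $\bigcap_r\ov{U}_r$; in the non-compact setting this assembly over infinitely many shells is unavoidable. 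A minor further point: requiring subdivisions to make the link of every codimension-two face a single circle is neither achievable in general nor needed, since that skeleton is deleted anyway; what is actually used is the passage to the oriented complex $\ovScl_*(X;\Z)$, which guarantees that codimension-one faces of a cycle cancel in pairs (Lemma~\ref{gluing_l1}).
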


\noindent
A \sf{pseudocycle} is a Borel-Moore pseudocycle~$f$ as in Definition~\ref{BMpseudo_dfn} 
such that the closure $\ov{f(M)}$ of~$f(M)$ in~$X$ is compact.
Two pseudocycles~$f_0$ and~$f_1$ are \sf{equivalent} if there exists 
a Borel-Moore pseudocycle equivalence~$\wt{f}$ as in Definition~\ref{BMpseudo_dfn} 
such that $\ov{\wt{f}(M)}$ is a compact subset of~$X$.
The set~$\cH_k(X)$ of equivalence classes of $k$-pseudocycles to $X$ with 
the addition induced by the disjoint union is also an abelian group.
The analogue of Theorem~\ref{main_thm} for pseudocycles and the standard singular homology
is \cite[Theorem~1.1]{Z}.

\begin{rmk}\label{McDuff_rmk}
Let $X$, $f_0$, $f_1$, $\wt{f}$, $Y_0\!\subset\!M_0$, and $Y_1\!\subset\!M_1$ 
be as in Definition~\ref{BMpseudo_dfn}\ref{BMequiv_it}.
Identify a neighborhood~$W$
of~$\prt\wt{M}$ in~$\wt{M}$ with $[0,1)\!\times\!\prt\wt{M}$.
The~space
\begin{gather*}
\wh{M}\equiv\big(\wt{M}\!\sqcup\![0,1]\!\times\!(M_0\!\sqcup\!M_1)
\!-\!\{1\}\!\times\!Y_0\!-\!\{0\}\!\times\!Y_1\big)\!\big/\!\sim,
\qquad\hbox{where}\\
\wt{M}\ni p_0\sim (1,p_0)\!\in\![0,1]\!\times\!(M_0\!-\!Y_0), \quad
\wt{M}\ni p_1\sim (0,p_1)\!\in\![0,1]\!\times\!(M_1\!-\!Y_1),
\end{gather*}
is then a smooth oriented manifold with boundary $M_1\!\sqcup\!(-M_0)$.
We can deform~$\wt{F}$, while keeping it fixed on~$\prt\wt{M}$, so that it is constant
on the fibers of the projection \hbox{$W\!\lra\!\prt\wt{M}$}.
The~map
$$\wh{f}\!:\wh{M}\lra X, \qquad
\wh{f}(p)=\wt{F}(p)~~\forall\, p\!\in\!\wt{M}, \quad
\wh{f}(t,p)=f_r(p) ~~\forall~p\!\in\!M_r,\,r\!=\!0,1,$$
is then well-defined and smooth.
It satisfies the conditions in Definition~\ref{BMpseudo_dfn}\ref{BMequiv_it} 
with~$\wt{f}$ replaced by~$\wh{f}$ and $Y_0,Y_1\!=\!\eset$.
Thus, D.~McDuff's idea of attaching two collars, which is used in 
the proof of \cite[Theorem~1.1]{Z}, leads to a more relaxed, but equivalent,
formulation of pseudocycle equivalence than the traditional one, with $Y_0,Y_1\!=\!\eset$.
\end{rmk}

\begin{rmk}\label{contdfn_rmk}
As with \cite[Theorem~1.1]{Z}, it is sufficient for the purposes of Theorem~\ref{main_thm}
to require Borel-Moore pseudocycles and equivalences to be  just continuous.
All constructions in this paper would go through; Lemma~\ref{bdpush_lmm} would no longer
be needed.
On the other hand, smooth pseudocycles are more advantageous for transversality 
considerations.
\end{rmk}

\noindent
The constructions in this paper and in~\cite{Z} are direct and geometric;
both are motivated by the outline proposed in \cite[Section~7.1]{McSa}.
The proof of Theorem~\ref{main_thm} is conceptually similar to the proof of \cite[Theorem~1.1]{Z}, 
but the specifics are different because the Borel-Moore homology does not behave like 
the standard singular homology.
Inspired by~\cite{Spanier93}, we use the chain complex~$\Scl_{\{U\};*}(X;\Z)$
of singular chains that are locally finite in~$X$ and lie in a subspace $U\!\subset\!X$
to adjust the construction in~\cite{Z} to the setting of Theorem~\ref{main_thm}.
The homologies~$\Hcl_{\{U\};*}(X;\Z)$ of this complex, $\Hcl_*(X;\Z)$ of~$\Scl_*(X;\Z)$,
and $\Hcl_*(X,\{U\};\Z)$ of the quotient complex 
\BE{Sclquotdfn_e}\Scl_*\big(X,\{U\};\Z\big)\equiv \Scl_*(X;\Z)/\Scl_{\{U\};*}(X;\Z)\EE
form an exact triangle.
Given a Borel-Moore $k$-pseudocycle~$f$ to~$X$, we construct an arbitrarily small neighborhood~$U$
of $\Bd f$ with $\Hcl_{\{U\};*}(X;\Z)$ vanishing for $l\!>\!k\!-\!2$ and define
an element~$[f]_{X;U}$ in~$\Hcl_*(X,\{U\};\Z)$.
Via the aforementioned exact triangle, $[f]_{X;U}$ corresponds to an element~$[f]$ 
in~$\Hcl_*(X;\Z)$.
It is shown in~\cite{Z} that for each $k$-pseudocycle~$f$ there is an arbitrarily small
neighborhood~$U$ of $\Bd f$ with $H_l(U;\Z)$ vanishing for $l\!>\!k\!-\!2$;
a~class~$[f]_{X;U}$ is then constructed in~$H_k(X,U;\Z)$.
Our neighborhoods~$U$ are more carefully chosen versions of
the neighborhoods~$U$ of~\cite{Z}; see the proof of Proposition~\ref{neighb2_prp}.\\

\noindent
Section~\ref{outline_subs} outlines the proof of Theorem~\ref{main_thm} in
Section~\ref{mainpf_sec}.
This outline is nearly identical to \cite[Section~1.2]{Z},
with the standard homology theory replaced by an appropriate
homology theory
of locally finite singular chains.
However, care needs to be exercised in actually implementing this outline
as we are now dealing with infinite chains.
Section~\ref{BMhom_sec} thoroughly reviews the relevant background on the Borel-Moore homology
in a straightforward manner readily accessible to a broad mathematical audience
and provides the necessary tools to adapt the approach of~\cite{Z}.
In order to show that the Borel-Moore pseudocycles represent all of the cohomology of an oriented manifold,
we also give a relatively simple proof of a Poincar\'e Duality between the singular cohomology
of such a manifold and its Borel-Moore homology.
Our proof is motivated by the approach of \cite[Appendix~A]{MiSt}, which
shows that the compactly supported cohomology of an oriented manifold is
dual to its standard singular homology.
Throughout the remainder of this paper, a \sf{manifold} will always mean a smooth manifold.\\

\noindent
We are grateful to the anonymous reviewer for many helpful suggestions.

\subsection{Outline of Section~\ref{mainpf_sec}}
\label{outline_subs}

\noindent
An oriented $k$-manifold is equipped with a \sf{fundamental class}
$[M]\!\in\!\Hcl_k(M;\Z)$;
see Proposition~\ref{FC_prp}.
A smooth proper map $f\!:M\!\lra\!X$ from such a manifold determines 
an element 
$$[f]\equiv f_*[M] \in \Hcl_k(X;\Z)\,.$$
A Borel-Moore $k$-pseudocycle $f\!:M\!\lra\!X$ need not be a proper map.
However, one can choose a closed $k$-submanifold with boundary, $\ov{V}\!\subset\!M$,
so that $f|_{\ov{V}}$ is proper and $f(M\!-\!\ov{V})$ lies in a small neighborhood~$U$ of~$\Bd f$.
This implies that $f|_{\ov{V}}$ determines an element
$$[f]_{X;U}\equiv\big[f|_{\ov{V}}\big]\equiv f_*[\ov{V}]\in \Hcl_k\big(X,\{U\};\Z\big).$$
By Proposition~\ref{neighb2_prp}, $U$ can be chosen so that $\Hcl_k(X,\{U\};\Z)$
is
naturally isomorphic to~$\Hcl_k(X;\Z)$.\\

\noindent
In order to show that the image~$[f]$ of $f_*[\ov{V}]$ in $\Hcl_k(X;\Z)$ depends only on~$f$, 
we replace the chain complex~\e_ref{Sclquotdfn_e} by a quotient complex~$\ovScl_*(X,\{U\};\Z)$.
The latter is the direct adaptation of the complex~$\ov{S}_*(X)$ of~\cite{Z}
from the standard singular chains to the locally finite singular chains.
The homology~$\ovHcl_*(X,\{U\};\Z)$ of~$\ovScl_*(X,\{U\};\Z)$ is naturally isomorphic 
to~$\Hcl_*(X,\{U\};\Z)$,
but cycles and boundaries in this chain complex can be constructed more easily;
see the last paragraph of \cite[Section~2.3]{Z}.\\

\noindent
A Borel-Moore pseudocycle equivalence $\ti{f}\!:\ti{M}\!\lra\!X$ between
two Borel-Moore pseudocycles 
$$f_r\!:M_r\!\lra\!X, \qquad r\!=\!0,1,$$ 
gives rise to a chain equivalence 
between the corresponding cycles in $\ovScl_*(X,\{W\};\Z)$, 
for a small neighborhood~$W$ of~$\Bd\ti{f}$.
This implies~that 
$$[f_0]_{X;W}=[f_1]_{X;W}\in 
\ovHcl_k\big(X,\{W\};\Z\big)\cong\Hcl_k\big(X,\{W\};\Z\big).$$
By Proposition~\ref{neighb2_prp}, $W$ can be chosen so that 
$\Hcl_k(X;\Z)$ naturally injects into $\Hcl_k(X,\{W\};\Z)$.
Thus, 
$$[f_0]=[f_1]\in\Hcl_k(X;\Z)$$ 
and the homomorphism $\Phi_*$ is well-defined;
see Section~\ref{phi_sec} for details.\\

\noindent
The homomorphism $\Psi_*$ is constructed by first showing that 
all codimension~1 faces of the  simplices of a cycle
in~$\ovScl_k(X;\Z)$ come in pairs with opposite orientations;
see Lemma~\ref{gluing_l1}.
By gluing the $k$-simplices along the codimension~1 faces paired up in this way,
we obtain a proper map from a simplicial complex~$M'$ to~$X$.
The complement of the codimension~2 simplices in~$M'$ is a manifold;
the continuous map from~it can be smoothed out in a standardized manner via Lemma~\ref{bdpush_lmm}.
This systematic procedure produces a Borel-Moore pseudocycle from a cycle in~$\ovScl_k(X;\Z)$.
A chain equivalence between two $k$-cycles in~$\ovScl_k(X;\Z)$, $\{c_0\}$ and~$\{c_1\}$,
similarly determines a Borel-Moore pseudocycle equivalence between 
the pseudocycles obtained from~$\{c_0\}$ and~$\{c_1\}$.\\

\noindent
In Section~\ref{isom_sec}, we conclude by confirming that the homomorphisms $\Psi_*$ 
and $\Phi_*$ are mutual inverses.
As in~\cite{Z}, it is fairly straightforward to show  
that the map $\Phi_*\!\circ\!\Psi_*$ is the identity on~$\Hcl_*(X;\Z)$.
Following the approach in~\cite{Z},
we then show that the homomorphism~$\Phi_*$ is injective.\\

\noindent
We now note some basic facts concerning proper maps
that will be used in the proof of Theorem~\ref{main_thm}.

\begin{lmm}\label{proper_lmm}
Let $f\!:M\!\lra\!X$ be a continuous map.
\begin{enumerate}[label=(\arabic*),leftmargin=*]

\item\label{proper1} If $X$ is Hausdorff and \hbox{$U\!\subset\!X$} is an open neighborhood of $\Bd f$,
then $f|_{M-f^{-1}(U)}$ is a proper map.

\item\label{Bdext_it} If $X$ is Hausdorff and locally compact, then
$$\Bd f|_{M-B}\subset (\Bd f)\!\cup\ov{f(B)} \qquad\forall~B\!\subset\!M\,.$$

\item If $f$ is proper,  $B\!\subset\!M$ is closed, and either $M$ or $X$ is Hausdorff, 
then $f|_B$ is also proper.

\item\label{proper4} If $f$ is proper and $X$ is Hausdorff and locally compact,
then $f$ is a closed map.

\item\label{properext_it} 
If $X$ is Hausdorff and admits a locally finite cover $\{A_i\}_{i\in\cI}$ by compact subsets, 
$M$ is normal and locally compact, and $B\!\subset\!M$ is a closed subset
such that $f|_B$ is proper, then there exists an open neighborhood~$W\!\subset\!M$ of~$B$ so
that $f|_{\ov{W}}$ is still proper.

\end{enumerate}
\end{lmm}

\begin{proof}
\begin{enumerate}[label=(\arabic*),leftmargin=*]
\item Let $A \subset X$ be a compact set disjoint from $\Bd f$. Then, sets of the form $X - \overline{f(M-K)}$, where $K \subset M$ is compact, form an open cover of $A$. Taking a finite subcover yields a compact set $K' \subset M$ such that $A$ is disjoint from $\overline{f(M-K')}$. Therefore, $f^{-1}(A)$ is a closed subset of the compact set $K'$, and is thus compact.

\item We prove the contrapositive. Let $x \in X$ be disjoint from $\Bd f \cup \overline{f(B)}$. As this latter set is closed, there is a precompact open neighborhood $U$ of $x$ that is disjoint from it. As $X$ is Hausdorff and $\overline{U}$ is compact, we may, by shrinking $U$, assume $\overline{U}$ is disjoint from $\Bd f \cup \overline{f(B)}$. By Lemma~\ref{proper_lmm}\ref{proper1}, $f^{-1}(\overline{U})$ is compact. It is also disjoint from $B$. Therefore, as $x \not\in \overline{f(M - B - \overline{U})}$, $x\not\in \Bd f|_{M-B}$

\item Let $A \subset X$ be compact. As either $M$ or $X$ is Hausdorff and $f$ is proper, then $f^{-1}(A)$ is a closed compact set. As $B$ is closed, $f^{-1}(A) \cap B$ is a closed subset of a compact set, so it is compact.

\item Let $B \subset M$ be closed and $x \in X$ be a limit point of $f(B)$. Let $U$ be a precompact open neighborhood of $x$. As $f$ is proper, $f^{-1}(\overline{U})$ is compact. As $B$ is closed, $f^{-1}(\overline{U}) \cap B$ is compact. As $f$ is continuous, $\overline{U} \cap f(B)$ is compact and therefore closed. As $x$ is a limit point of $\overline{U} \cap f(B)$, it lies in $f(B)$.

\item Since the cover $\{A_i\}_{i\in\cI}$ of~$X$ is locally finite, every compact subset~$A\!\subset\!X$
is covered by finitely many elements of this collection.
It is thus sufficient to construct a neighborhood~$W\!\subset\!M$ of~$B$
so that \hbox{$\ov{W}\!\cap\!f^{-1}(A_i)$} is compact for every $i\!\in\!\cI$.\\

\noindent
The cover $\{f^{-1}(A_i)\!\}_{i\in\cI}$ of~$M$ is locally finite and 
consists of closed subsets of~$M$.
For each $i\!\in\cI$, let
$$\cI_i=\big\{j\!\in\!\cI\!:A_i\!\cap\!A_j\!\neq\!\eset\big\} \quad\hbox{and}\quad
B_i^c=\bigcup_{j\in\cI-\cI_i}\!\!\!\!\!f^{-1}(A_j)\subset M.$$
By the compactness of~$A_i$, the collection $\cI_i$ is finite.
Since $\{f^{-1}(A_i)\!\}_{i\in\cI-\cI_i}$ is a locally finite collection of closed subsets
of~$M$, $B_i^c$ is a closed subset of~$M$ disjoint from the closed subset~$f^{-1}(A_i)$.
Let $U_i\!\subset\!M$ be an open neighborhood of~$f^{-1}(A_i)$ disjoint from~$B_i^c$.
Since
$$\big\{j\!\in\!\cI\!:U_i\!\cap\!U_j\!\neq\!\eset\big\}
\subset\bigcup_{k\in\cI_i}\!\cI_k,$$
the open cover $\{U_i\}_{i\in\cI}$ is locally finite.\\

\noindent
For each $i\!\in\!\cI$, $B\!\cap\!f^{-1}(A_i)\!\subset\!M$ is a compact subset.
Let $V_i\!\subset\!M$ be an open neighborhood of $B\!\cap\!f^{-1}(A_i)$ so that 
$\ov{V}_i\!\subset\!M$ is compact and contained in~$U_i$.
Let 
$$W=\bigcup_{i\in\cI}\!V_i\subset M.$$
Since the collection $\{\ov{V}_i\}_{i\in\cI}$ is locally finite,
$$\ov{W}=\bigcup_{i\in\cI}\!\ov{V}_i\subset M.$$
For any $i\!\in\!\cI$,
$$\ov{W}\!\cap\!f^{-1}(A_i)=\bigcup_{j\in\cI_i}\!(\ov{V}_j\!\cap\!f^{-1}(A_i)\!\big)\subset M.$$
The above finite union of compact subsets of $M$ is compact, as needed.
\end{enumerate}
\end{proof}

\section{Borel-Moore homology}
\label{BMhom_sec}

\subsection{Standard simplicies}
\label{notat_subs}

\noindent
In order to set up notation for the standard simplices, their subsets, and maps between
them consistent with~\cite{Z}, we reproduce most of \cite[Section~2.1]{Z}.
The present section can be skipped at first and referred to as needed later. Throughout, $\Z^{\ge0}$ denotes the nonnegative integers and $\Z^+$ denotes the positive integers.\\

\noindent
For $k\!\in\!\Z^{\ge0}$, let
$$[k]=\big\{0,1,2,\ldots,k\big\}.$$
For a finite subset $A\!\subset\!\R^k$, we denote by $\CH(A)$ and $\CH^0(A)$ 
the (\sf{closed}) \sf{convex hull} of~$A$ and the \sf{open convex hull} of~$A$, 
respectively, i.e.
\begin{equation*}\begin{split}
\CH(A)&=\Big\{\sum_{v\in A}t_vv\!: t_v\!\in\![0,1];\,\sum_{v\in A}t_v\!=\!1\Big\}
\qquad\hbox{and}\\
\CH^0(A)&=\Big\{\sum_{v\in A}t_vv\!: t_v\!\in\!(0,1);\,\sum_{v\in A}t_v\!=\!1\Big\}.
\end{split}\end{equation*}
If $B\!\subset\!\R^m$ is also a finite set, a map $f\!:\CH(A)\!\lra\!\CH(B)$ is
\sf{linear} if
$$f\bigg(\sum_{v\in A}t_vv\bigg)=\sum_{v\in A}t_vf(v)
\qquad\forall~t_v\!\in\![0,1]^A~~\hbox{s.t.}~~\sum_{v\in A}t_v\!=\!1.$$
Such a map is determined by its values on~$A$.\\

\noindent
For each $p\!=\!1,\ldots,k$, let $e_p$ be the $p$-th coordinate vector in $\R^k$.
Put $e_0\!=\!0\!\in\!\R^k$.
Denote by
$$\De^k=\CH\big(e_0,e_1,\ldots,e_k\big) \qquad\hbox{and}\qquad
\Int\De^k=\CH^0\big(e_0,e_1,\ldots,e_k\big)$$
the \sf{standard $k$-simplex} and its interior.
Let 
$$b_k=\frac{1}{k\!+\!1}\bigg(\sum_{p=0}^{p=k}e_p\bigg)=
\Big(\frac{1}{k\!+\!1},\ldots,\frac{1}{k\!+\!1}\Big)\in\Bbb{R}^k$$
be the \sf{barycenter} of~$\De^k$.\\

\noindent
For each $p\!=\!0,1,\ldots,k$, let
$$\De^k_p=\CH\big(\big\{e_q\!: q\!\in\![k]\!-\!\{p\}\!\big\}\big)
\qquad\hbox{and}\qquad
\Int\De_p^k=\CH^0\big(\big\{e_q\!: q\!\in\![k]\!-\!\{p\}\!\big\}\big)$$
denote the \sf{$p$-th face} of $\De^k$ and its interior.
Define a linear~map
$$\io_{k;p}\!: \De^{k-1}\lra\De^k_p\subset\De^k \qquad\hbox{by}\qquad
\io_{k;p}(e_q)=\begin{cases}
e_q,& \hbox{if}~q\!<\!p;\\
e_{q+1},& \hbox{if}~q\!\ge\!p.
\end{cases}$$
We also define a projection map
$$\wt\pi^k_p\!: \De^k\!-\!\{e_p\}\lra\De^k_p \qquad\hbox{by}\qquad
\wt\pi^k_p\Big(\sum_{q=0}^{q=k}t_qe_q\Big)=
\frac{1}{1\!-\!t_p}\bigg(
\sum_{\begin{subarray}{c}0\le q\le k\\ q\neq p\end{subarray}}\!\!\!t_qe_q\bigg).$$
Put
$$b_{k;p}=\io_{k;p}(b_{k-1}),\qquad
b_{k;p}'=\frac{1}{k\!+\!1}\bigg(b_k+\!
\sum_{\begin{subarray}{c}0\le q\le k\\ q\neq p\end{subarray}}\!\!\!e_q\bigg).$$
The points $b_{k;p}$ and $b_{k;p}'$ are the barycenters of 
the $(k\!-\!1)$-simplex~$\De^k_p$
and of the $k$-simplex spanned by~$b_k$ and the vertices of~$\De^k_p$.
Define a neighborhood of $\Int\De^k_p$ in $\De^k$ by
\begin{equation*}\begin{split}
U^k_p&=\big\{t_pb_{k;p}'\!+\!
\sum_{\begin{subarray}{c}0\le q\le k\\ q\neq p\end{subarray}}\!\!\!t_qe_q\!: 
t_p\!\ge\!0,~t_q\!>\!0~\forall q\!\neq\!p;~ \sum_{q=0}^{q=k}t_q\!=\!1\big\}\\
&=\big(\Int\De^k_p\big)\!\cup\!
\CH^0\big(\big\{e_q\!:q\!\in\![k]\!-\!\{p\}\!\big\}\!\cup\!\{b_{k;p}'\}\!\big);
\end{split}\end{equation*}
see Figure~\ref{simplices_fig}.
These disjoint neighborhoods are used to construct Borel-Moore pseudocycles out of Borel-Moore homology cycles
via Lemma~\ref{bdpush_lmm}.\\
\begin{figure}
\begin{pspicture}(-.7,-1.8)(10,1.5)
\psset{unit=.4cm}
\psline{->}(0,-3)(8,-3)\psline{->}(1,-4)(1,4)\psline(1,3)(7,-3)
\pscircle*(1,-3){.2}\pscircle*(7,-3){.2}\pscircle*(1,3){.2}
\rput(.4,-3.5){\smsize{$e_0$}}\rput(7.4,-3.5){\smsize{$e_1$}}
\rput(.3,2.7){\smsize{$e_2$}}
\rput(4,-3.7){\smsize{$\De^2_2$}}\rput{90}(.3,0){\smsize{$\De^2_1$}}
\rput{-45}(5,0){\smsize{$\De^2_0$}}
\pscircle*(3,-1){.2}\pscircle*(3.67,-2.33){.2}
\rput(2.4,-1){\smsize{$b_2$}}\rput(4.6,-1.9){\smsize{$b_{2;2}'$}}
\psline{->}(15,-3)(23,-3)\psline{->}(16,-4)(16,4)\psline(16,3)(22,-3)
\pscircle*(16,-3){.2}\pscircle*(22,-3){.2}\pscircle*(16,3){.2}
\rput(15.4,-3.5){\smsize{$e_0$}}\rput(22.4,-3.5){\smsize{$e_1$}}
\rput(15.3,2.7){\smsize{$e_2$}}\rput(19,-3.7){\smsize{$\De^2_2$}}
\pscircle*(16,0){.2}\pscircle*(19,0){.2}
\rput(15.1,-.3){\smsize{$b_{2;1}$}}\rput(20.2,0){\smsize{$b_{2;0}$}}
\psline{->}(30,-3)(38,-3)\psline{->}(31,-4)(31,4)\psline(31,3)(37,-3)
\pscircle*(31,-3){.2}\pscircle*(37,-3){.2}\pscircle*(31,3){.2}
\rput(30.4,-3.5){\smsize{$e_0$}}\rput(37.4,-3.5){\smsize{$e_1$}}
\rput(30.3,2.7){\smsize{$e_2$}}
\rput(34,-3.7){\smsize{$\De^2_2$}}
\psline[linestyle=dotted](31,-3)(33.67,-2.33)\psline[linestyle=dotted](37,-3)(33.67,-2.33)
\pnode(33.5,-2.65){A}\rput(29,0){\rnode{B}{\smsize{$U_2^2$}}}
\nccurve[nodesep=0,angleA=-90,angleB=180,ncurv=.5,linewidth=.02]{->}{B}{A}
\end{pspicture}
\caption{The standard 2-simplex and some of its distinguished subsets}
\label{simplices_fig}
\end{figure}
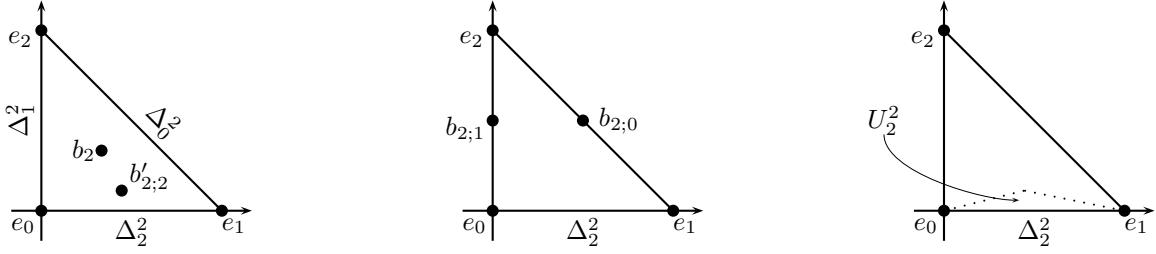

\noindent
If $p,q\!=\!0,1,\ldots,k$ and $p\!\neq\!q$, let 
$$\De^k_{p,q}\equiv \De^k_p\!\cap\!\De^k_q$$
be the corresponding codimension~2 simplex.
Define a projection map
$$\wt\pi^k_{p,q}\!: \De^k-\CH(e_p,e_q)\lra\De^k_{p,q} \qquad\hbox{by}\qquad
\wt\pi^k_{p,q}\Big(\sum_{r=0}^{r=k}t_re_r\Big)=
\frac{1}{1\!-\!t_p\!-\!t_q}\bigg(
\sum_{\begin{subarray}{c}0\le r\le k\\ r\neq p,q\end{subarray}}\!\!\!\!t_re_r\bigg).$$
We define a neighborhood of $\Int\De^k_{p,q}$ in $\De^k$ by
\begin{equation*}\begin{split}
U^k_{p,q}&=\big\{t_p\io_{k;p}(b_{k-1;\io_{k;p}^{-1}(q)}')
\!+\!t_q\io_{k;q}(b_{k-1;\io_{k;q}^{-1}(p)}')
\!+\!\sum_{\begin{subarray}{c}0\le r\le k\\ r\neq p,q\end{subarray}}\!\!\!t_re_r\!:
t_p,t_q\!\ge\!0,\, t_r\!>\!0\, \forall r\!\neq\!p,q;\,\sum_{r=0}^{r=k}t_r\!=\!1\big\}\\
&=\big(\Int\De^k_{p,q}\big)\!\cup\!
\CH^0\big(\big\{e_r\!:r\!\in\![k]\!-\!\{p,q\}\!\big\}\!\cup\!
\{\io_{k;p}(b_{k-1;\io_{k;p}^{-1}(q)}'),\io_{k;q}(b_{k-1;\io_{k;q}^{-1}(p)}')\}\!\big);
\end{split}\end{equation*}
see Figure~\ref{simplices_fig2}.
These disjoint neighborhoods are used to construct Borel-Moore pseudocycle equivalences out of 
Borel-Moore bounding chains via Lemma~\ref{bdpush_lmm}.\\

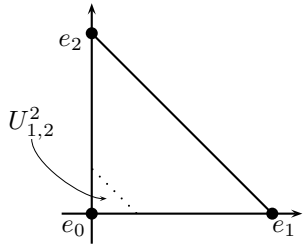
\begin{figure}
\begin{pspicture}(-5,-1.8)(10,1.5)
\psset{unit=.4cm}
\psline{->}(2,-3)(10,-3)\psline{->}(3,-4)(3,4)\psline(3,3)(9,-3)
\pscircle*(3,-3){.2}\pscircle*(9,-3){.2}\pscircle*(3,3){.2}
\rput(2.4,-3.5){\smsize{$e_0$}}\rput(9.4,-3.5){\smsize{$e_1$}}
\rput(2.3,2.7){\smsize{$e_2$}}
\psline[linestyle=dotted](3,-1.5)(4.5,-3)
\pnode(3.5,-2.5){A}\rput(1,0){\rnode{B}{\smsize{$U^2_{1,2}$}}}
\nccurve[nodesep=0,angleA=-90,angleB=180,ncurv=.5,linewidth=.02]{->}{B}{A}
\end{pspicture}
\caption{The standard 2-simplex and a distinguished neighborhood of a codimension 2 simplex}
\label{simplices_fig2}
\end{figure}
\noindent
Denote by $\cS_k$  the group of permutations of the set~$[k]$. 
We view the set $\cS_k$ as a subset of $\cS_{k+1}$ by setting  
\hbox{$\tau(k\!+\!1)\!=\!k\!+\!1$} for each \hbox{$\tau\!\in\!\cS_k$}. 
For any $\tau\!\in\!\cS_k$, let
$$\tau\!:\De^k\lra\De^k$$
be the linear map defined by
$$\tau(e_q)=e_{\tau(q)} \qquad\forall\,q=0,1,\ldots,k.$$

\begin{lmm}[{\cite[Lemma~2.1]{Z}}]\label{bdpush_lmm}
Let $k\!\in\!\Z^+$, $Y\!\subset\!\De^k\!$ be the $(k\!-\!2)$-skeleton of $\De^k$, and 
\hbox{$\wt{Y}\!\subset\!\De^{k+1}$} be the $(k\!-\!2)$-skeleton of $\De^{k+1}$.
There exist continuous functions 
$$\vp_k\!: \De^k\lra\De^k \qquad\hbox{and}\qquad 
\wt\vp_{k+1}\!: \De^{k+1}\lra\De^{k+1}$$
such~that
\begin{enumerate}[label=(\alph*),leftmargin=*]
\item $\vp_k$ is smooth outside of $Y$ and 
$\wt\vp_{k+1}$ is smooth outside of $\wt{Y}$;
\item for all $p\!=\!0,\ldots,k$ and $\tau\!\in\!\cS_k$,
\BE{bdpush_e1}
\vp_k|_{U^k_p}=\wt\pi^k_p\big|_{U^k_p}\,, \qquad
\vp_k\!\circ\!\tau=\tau\!\circ\!\vp_k;\EE
\item for all $p,q\!=\!0,\ldots,k\!+\!1$ with $p\!\neq\!q$ and 
$\wt\tau\!\in\!\cS_{k+1}$,
\BE{bdpush_e2}
\wt\vp_{k+1}|_{U^{k+1}_{p,q}}=\wt\pi^{k+1}_{p,q}\big|_{U^{k+1}_{p,q}}, \quad
\wt\vp_{k+1}\!\circ\!\wt\tau=\wt\tau\!\circ\!\ti\vp_{k+1}, \quad
\wt\vp_{k+1}\!\circ\!\io_{k+1;p}=\io_{k+1;p}\!\circ\!\vp_k.\EE
\end{enumerate}
\end{lmm}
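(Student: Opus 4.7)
The plan is to construct $\vp_k$ and $\wt\vp_{k+1}$ piecewise: use the prescribed projection formulas on the distinguished neighborhoods ($U^k_p$ and $U^{k+1}_{p,q}$, respectively), set the map to the identity on a symmetric central region around the barycenter, and smoothly interpolate across the intermediate annular region via an $\cS_k$-equivariant (resp.\ $\cS_{k+1}$-equivariant) partition of unity.

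For $\vp_k$: the neighborhoods $U^k_0,\ldots,U^k_k$ are pairwise disjoint and are permuted equivariantly by $\cS_k$, so setting $\vp_k\!=\!\wt\pi^k_p$ on $U^k_p$ already gives a smooth, $\cS_k$-equivariant partial map (the projection $\wt\pi^k_p$ is smooth because $e_p\!\notin\!U^k_p$). To extend across the rest of $\De^k$, I would choose an $\cS_k$-equivariant smooth cut-off $\rho\!:\De^k\!\lra\![0,1]$ that equals $1$ on $\bigcup_pU^k_p$ and vanishes in a symmetric neighborhood of $b_k$, and then define $\vp_k(x)\!=\!\rho(x)\wt\pi^k_p(x)\!+\!(1\!-\!\rho(x))x$ in the region closer to $\De^k_p$ than to any other face, and $\vp_k(x)\!=\!x$ in the central region. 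The regions are cut out by an $\cS_k$-symmetric decomposition of $\De^k$ analogous to the barycentric dual cells, which makes equivariance automatic.

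For $\wt\vp_{k+1}$: proceed analogously. Set $\wt\vp_{k+1}\!=\!\wt\pi^{k+1}_{p,q}$ on each $U^{k+1}_{p,q}$. To secure the face compatibility $\wt\vp_{k+1}\!\circ\!\io_{k+1;p}\!=\!\io_{k+1;p}\!\circ\!\vp_k$, take an $\cS_{k+1}$-equivariant collar of each face $\De^{k+1}_p$, parametrized by $(t_p,y)\!\mapsto\!(1\!-\!t_p)\io_{k+1;p}(y)\!+\!t_pe_p$, and on the inner portion of this collar (small $t_p$) set $\wt\vp_{k+1}$ equal to $\io_{k+1;p}\!\circ\!\vp_k$ in the $y$-variable; then interpolate smoothly to the identity as $t_p$ increases. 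A direct calculation using the explicit formulas for $\wt\pi^k_p$, $\wt\pi^{k+1}_p$, and $\wt\pi^{k+1}_{p,q}$ shows that this collar prescription agrees with $\wt\pi^{k+1}_{p,q}$ on $U^{k+1}_{p,q}$, so the pieces glue consistently; filling in the remaining interior by an equivariant smooth interpolation to the identity completes the definition.

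The main obstacle is confirming that $\vp_k$ and $\wt\vp_{k+1}$ are smooth outside of the $(k\!-\!2)$-skeletons $Y$ and $\wt{Y}$, respectively. Each $\wt\pi^k_p$ fails to be smooth only at $e_p\!\in\!Y$, so no singularity arises within a single $U^k_p$; the only potential source of nonsmoothness is the overlap of transition zones associated to two distinct projections $\wt\pi^k_p$ and $\wt\pi^k_q$, whose common boundary necessarily meets the interior of $\De^k$ only along a subset of $\De^k_p\!\cap\!\De^k_q\!=\!\De^k_{p,q}\!\subset\!Y$. An analogous statement, now in codimension~3, controls $\wt\vp_{k+1}$. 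Verifying this requires a careful choice of $\rho$ and of the symmetric decomposition, and---for $\wt\vp_{k+1}$---bookkeeping of how the face collars interact with the central interpolation; this detail-tracking is where the bulk of the technical work lies.
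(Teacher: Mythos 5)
You are re-proving a quoted result: the paper gives no argument for this lemma, which is imported verbatim as \cite[Lemma~2.1]{Z}, so there is no in-paper proof to compare against. Your overall strategy -- impose $\wt\pi^k_p$ on the disjoint neighborhoods $U^k_p$ (resp.\ $\wt\pi^{k+1}_{p,q}$ on $U^{k+1}_{p,q}$), impose the identity on a central region, interpolate $\cS_k$-equivariantly, and enforce $\wt\vp_{k+1}\!\circ\!\io_{k+1;p}\!=\!\io_{k+1;p}\!\circ\!\vp_k$ via collars of the faces -- is viable, and your collar-compatibility claim is in substance correct: writing out the barycentric coordinates of $\io_{k+1;p}(b_{k;\io_{k+1;p}^{-1}(q)}')$ and $\io_{k+1;q}(b_{k;\io_{k+1;q}^{-1}(p)}')$, one checks that $\wt\pi^{k+1}_p$ maps $U^{k+1}_{p,q}$ into $\io_{k+1;p}(U^k_{p'})$ with $p'\!=\!\io_{k+1;p}^{-1}(q)$, so that $\io_{k+1;p}\!\circ\!\vp_k\!\circ\!\io_{k+1;p}^{-1}\!\circ\!\wt\pi^{k+1}_p\!=\!\wt\pi^{k+1}_{p,q}$ on all of $U^{k+1}_{p,q}$ once $\vp_k$ satisfies~\e_ref{bdpush_e1}.

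The genuine gap is in the smoothness bookkeeping, i.e.\ precisely the part you defer. Your assertion that the interface between the transition zones of $\wt\pi^k_p$ and $\wt\pi^k_q$ meets the interior of $\De^k$ only along $\De^k_{p,q}\!\subset\!Y$ is false: the wall of a nearest-face decomposition is a codimension-one set running through $\Int\De^k$ (already for $k\!=\!2$ it is a segment from a vertex into the opposite edge), and along it the formulas $\rho\,\wt\pi^k_p+(1\!-\!\rho)\id$ and $\rho\,\wt\pi^k_q+(1\!-\!\rho)\id$ disagree unless $\rho\!=\!0$. So the cutoff must be forced to vanish near these walls while equaling $1$ on $U^k_p$ and $U^k_q$, which approach each other along $\De^k_{p,q}$; such a cutoff cannot be smooth up to $\De^k_{p,q}$ -- tolerable since $\De^k_{p,q}\!\subset\!Y$, but it must be arranged explicitly, and the continuity of $\vp_k$ on $Y$ then needs its own (easy, but absent) argument using that every map being interpolated fixes $\De^k_{p,q}$ pointwise. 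More seriously, for $\wt\vp_{k+1}$ the same crowding occurs along the codimension-two simplices $\De^{k+1}_{p,q}$, which are \emph{not} in $\wt{Y}$: the $(k\!-\!2)$-skeleton of $\De^{k+1}$ has codimension three, so $\wt\vp_{k+1}$ must be smooth on $\Int\De^{k+1}_{p,q}$, exactly where the prescription $\wt\pi^{k+1}_{p,q}$ on $U^{k+1}_{p,q}$, the two face collars, and the interpolation to the identity all meet. Your closing remark that "an analogous statement, now in codimension~3, controls $\wt\vp_{k+1}$" is therefore not a detail but the crux of the lemma: it asserts that all interfaces and cutoff degenerations can be pushed into $\wt{Y}$, and this is not established -- indeed the codimension-two version of the claim on which you model it already fails as stated for $\vp_k$.
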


\subsection{Basic definitions}
\label{BMdfn_subs}

\noindent
Let $R$ be a commutative ring with unity~1 and $X$ be a topological space.
For $k\!\in\!\Z^{\ge0}$, denote by $\Hom(\De^k,X)$ the set of 
\sf{singular $k$-simplicies} on~$X$, i.e.~of continuous maps from~$\De^k$ to~$X$.
A
\sf{singular chain on~$X$ with coefficients in~$R$}, i.e.~a map
\BE{cmapdfn_e} c\!:\Hom(\De^k,X)\lra R,\EE
can be written (in a slight abuse of notation)
as a formal sum
\BE{BMch_e} c=\sum_{\si\in\Hom(\De^k,X)}\!\!\!\!\!\!\!\!\!\!\!\!a_{\si}\si~, 
\qquad a_{\si}\in R\,.\EE
We identify $\Hom(\De^k,X)$ with a subset of such maps by defining
$$\si\!: \Hom(\De^k,X)\lra R, \qquad 
\si(\tau)=\begin{cases}1,&\hbox{if}~\tau\!=\!\si;\\
0,&\hbox{if}~\tau\!\neq\!\si;\end{cases} \quad
\forall\,\si,\tau\!\in\!\Hom(\De^k,X).$$
We say that a singular $k$-simplex~$\si$ \sf{appears} in a singular chain~$c$ as 
in~\e_ref{cmapdfn_e} and~\e_ref{BMch_e} if $c(\si)\!\equiv\!a_{\si}$
is not zero.\\

\noindent
For a singular chain~$c$ as in~\e_ref{cmapdfn_e} and~\e_ref{BMch_e}, define
\BE{suppdfn_e}\supp(c)=
\bigcup_{\begin{subarray}{c}\si\in\Hom(\De^k,X)\\ c(\si)\neq0\end{subarray}}
\!\!\!\!\!\!\!\!\!\!\!\!\si(\De^k)
=\bigcup_{\begin{subarray}{c}\si\in\Hom(\De^k,X)\\ a_{\si}\neq0\end{subarray}}
\!\!\!\!\!\!\!\!\!\!\!\!\si(\De^k)\subset X\EE
to be the \sf{support of~$c$}.
If a $k$-simplex~$\si$ appears in~$c$, then $\si(\De^k)\!\subset\!\supp(c)$.
For  $U\!\subset\!X$, let
\BE{alecUdfn_e}\begin{split}
\ale_c(U)
&= \big\{\si\!\in\!\Hom(\De^k,X)\!:c(\si)\!\neq\!0,\,
\si(\De^k)\!\cap\!U\neq\!\eset\big\}\\
&=\big\{\si\!\in\!\Hom(\De^k,X)\!:a_{\si}\!\neq\!0,\,
\si(\De^k)\!\cap\!U\neq\!\eset\big\}.
\end{split}\EE
A \sf{finite singular $k$-chain on~$X$ with coefficients in~$R$} 
is a~map~$c$ as in~\e_ref{cmapdfn_e} such that the~set $\ale_c(X)$ is finite.
The $R$-module of such chains is the $k$-th module of 
the usual chain complex~$S_*(X;R)$ determining the standard singular homology~$H_*(X;R)$
of~$X$.\\

\noindent
A \sf{Borel-Moore $k$-chain on~$X$} is a~map~$c$ as in~\e_ref{cmapdfn_e}
such that for every $x\!\in\!X$ there exists an open neighborhood \hbox{$U_x\!\subset\!X$} of~$x$
so that the set $\ale_c(U_x)$ defined by~\e_ref{alecUdfn_e} is finite.
If $X$ is second countable, at most countably many simplicies appear 
in a Borel-Moore $k$-chain on~$X$.
If $X$ is Hausdorff, the support~\e_ref{suppdfn_e} of a Borel-Moore chain~$c$ 
is closed in~$X$.
The set~$\Scl_k(X;R)$ of Borel-Moore $k$-chains on~$X$ with coefficients in~$R$
is an $R$-module under the addition and scalar multiplication
of the values of the chains on the $k$-simplices.
This set contains~$\Hom(\De^k,X)$.
We call a~map
\BE{hrigid_e}h\!:\Hom(\De^k,X)\lra \Scl_p(X;R)\EE
\sf{rigid} if
\BE{hrigid_e2}
\supp\big(h(\si)\!\big)\subset\si(\De^k) \qquad\forall~\si\!\in\!\Hom(\De^k,X).\EE
Rigid maps, like the acyclic carriers in \cite[Section 13]{Mu2}, greatly ease the construction of chain homotopies.

\begin{lmm}\label{hrigid_lmm}
Let $X$ be a topological space. 
A rigid map~$h$ as in~\e_ref{hrigid_e} induces a homomorphism
\BE{hrigid_e3}\begin{split}
&\hspace{1in}h\!:\Scl_k(X;R)\lra\Scl_p(X;R),\\
&\big\{h(c)\!\big\}(\tau)= \sum_{\si\in\Hom(\De^k,X)}\!\!\!\!\!\!\!\!\!\!\!\!
c(\si)\big\{\!h(\si)\!\big\}(\tau)\in R
\quad\forall~\tau\!\in\!\Hom(\De^p,X),~c\!\in\!\Scl_k(X;R),
\end{split}\EE
extending~\e_ref{hrigid_e} such that 
\BE{hrigid_e5} 
\supp\big(h(c)\!\big)\subset\supp(c)\qquad\forall~c\!\in\!\Scl_k(X;R).\EE
\end{lmm}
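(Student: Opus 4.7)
The plan is to verify, in order, three properties of the formula \eqref{hrigid_e3}: (i) for each fixed $\tau$, the right-hand side is a finite sum in $R$, so that $\{h(c)\}(\tau)$ is well-defined; (ii) the resulting map $h(c)\colon \Hom(\De^p,X)\lra R$ is locally finite, so that $h(c)\in\Scl_p(X;R)$; (iii) the support containment \eqref{hrigid_e5} holds. The $R$-linearity of $c\mapsto h(c)$ and the fact that this extends the given map on simplices will then both be immediate from \eqref{hrigid_e3}. The engine driving all three parts is the rigidity condition \eqref{hrigid_e2}: whenever $\{h(\si)\}(\tau)\!\neq\!0$, the simplex $\tau$ appears in $h(\si)$, so $\tau(\De^p)\!\subset\!\supp(h(\si))\!\subset\!\si(\De^k)$.

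For (i), fix $\tau\!\in\!\Hom(\De^p,X)$ and choose any point $y\!\in\!\tau(\De^p)$. A simplex $\si$ contributes a nonzero term to $\{h(c)\}(\tau)$ only if $c(\si)\!\neq\!0$ and $\{h(\si)\}(\tau)\!\neq\!0$; by rigidity the latter forces $y\!\in\!\tau(\De^p)\!\subset\!\si(\De^k)$. Using that $c\!\in\!\Scl_k(X;R)$, pick an open neighborhood $U_y\!\subset\!X$ of $y$ for which $\ale_c(U_y)$ is finite. Every contributing $\si$ then lies in $\ale_c(U_y)$, so only finitely many terms are nonzero.

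The step I expect to be the main obstacle is (ii), because local finiteness for $c$ together with local finiteness for each individual $h(\si)$ must be combined carefully. Given $x\!\in\!X$, first use local finiteness of $c$ to select an open neighborhood $U_x\!\subset\!X$ of $x$ with $\ale_c(U_x)\!=\!\{\si_1,\ldots,\si_n\}$ finite. Each $h(\si_i)\!\in\!\Scl_p(X;R)$ admits an open neighborhood of $x$ on which it has only finitely many appearing simplices meeting that neighborhood; intersecting these with $U_x$ produces an open neighborhood $V_x\!\subset\!U_x$ of $x$ such that $\ale_{h(\si_i)}(V_x)$ is finite for $i\!=\!1,\ldots,n$. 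Now suppose $\tau\!\in\!\ale_{h(c)}(V_x)$ and pick $y\!\in\!\tau(\De^p)\!\cap\!V_x$. Repeating the argument of (i) inside $V_x$ shows that any $\si$ contributing to $\{h(c)\}(\tau)$ satisfies $y\!\in\!\si(\De^k)\!\cap\!V_x$ with $c(\si)\!\neq\!0$, so $\si\!\in\!\ale_c(V_x)\!\subset\!\{\si_1,\ldots,\si_n\}$. Since $\{h(\si)\}(\tau)\!\neq\!0$ and $\tau(\De^p)\!\cap\!V_x\!\neq\!\eset$, we get $\tau\!\in\!\ale_{h(\si_i)}(V_x)$ for some $i$. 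Hence $\ale_{h(c)}(V_x)\!\subset\!\bigcup_{i=1}^{n}\ale_{h(\si_i)}(V_x)$, a finite set, proving local finiteness of $h(c)$ at $x$.

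Finally, (iii) is immediate: if $x\!\in\!\supp(h(c))$, there exists $\tau$ with $\{h(c)\}(\tau)\!\neq\!0$ and $x\!\in\!\tau(\De^p)$, so some $\si$ with $c(\si)\!\neq\!0$ has $\{h(\si)\}(\tau)\!\neq\!0$, and by rigidity $x\!\in\!\tau(\De^p)\!\subset\!\si(\De^k)\!\subset\!\supp(c)$. That \eqref{hrigid_e3} extends the original rigid map on $\Hom(\De^k,X)$ follows by specializing to $c\!=\!\si_0$, and $R$-linearity of the extension is visible on the defining formula. This completes the plan.
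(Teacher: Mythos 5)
Your proposal is correct and follows essentially the same route as the paper: rigidity forces $\tau(\De^p)\subset\si(\De^k)$ whenever $\{h(\si)\}(\tau)\neq 0$, which gives finiteness of each defining sum, local finiteness of $h(c)$ via a finite intersection of neighborhoods attached to the finitely many relevant $\si$, and the support containment \e_ref{hrigid_e5}. The only cosmetic difference is that for well-definedness you use a single point of $\tau(\De^p)$ where the paper invokes compactness of $\tau(\De^p)$ to choose a neighborhood of the whole image; both succeed for the same reason.
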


\begin{proof} Let $c\!\in\!\Scl_k(X;R)$ and $\tau\!\in\!\Hom(\De^p,X)$.
By the compactness of $\tau(\De^p)\!\subset\!X$,
there exists an open neighborhood~$U_{\tau}$ of~$\tau(\De^p)$ in~$X$ such that
the~set $\ale_c(U_{\tau})$ is finite.
By~\e_ref{suppdfn_e} and~\e_ref{hrigid_e2},
\begin{equation*}\begin{split}
\big\{\si\!\in\!\Hom(\De^k,X)\!:c(\si)\big\{\!h(\si)\!\big\}(\tau)\!\neq\!0\big\}
&\subset \big\{\si\!\in\!\Hom(\De^k,X)\!:c(\si)\!\neq\!0,\,
\tau(\De^p)\!\subset\!\supp\big(h(\si)\big)\big\}\\
&\subset \big\{\si\!\in\!\Hom(\De^k,X)\!:c(\si)\!\neq\!0,\,
\tau(\De^p)\!\subset\!\si(\De^k)\big\}\\
&\subset \big\{\si\!\in\!\Hom(\De^k,X)\!:c(\si)\!\neq\!0,\,
\si(\De^k)\!\cap\!U_{\tau}\!\neq\!\eset\big\}=\ale_c(U_{\tau}).
\end{split}\end{equation*}
Thus, the sum in~\e_ref{hrigid_e3} is finite.\\

\noindent
Let $c\!\in\!\Scl_k(X;\Z)$, $x\!\in\!X$, and $U_c$ be an open neighborhood
of~$x$ in~$X$ such that the~set $\ale_c(U_c)$ is finite.
For each $\si\!\in\!\Hom(\De^k,X)$, let $U_{\si}$ 
be an open neighborhood of~$x$ in~$X$ such that
the~set 
$$\ale_{h(\si)}(U_{\si})\equiv \big\{\tau\!\in\!\Hom(\De^p,X)\!:
\big\{\!h(\si)\!\big\}(\tau)\!\neq\!0,\,\tau(\De^p)\!\cap\!U_{\si}\neq\!\eset\big\}$$ 
is finite.
The~subset
$$U_x\equiv U_c \cap \bigcap_{\si\in\ale_c(U_c)}\!\!\!\!\!\!\!U_{\si}~\subset X$$
is also an open neighborhood of~$x$ in~$X$.
By~\e_ref{hrigid_e2},
$$\ale_{h(\si)}(U_c)\subset\big\{\tau\!\in\!\Hom(\De^p,X)\!:
\tau(\De^p)\!\subset\!\si(\De^k),\,\tau(\De^p)\!\cap\!U_c\!\neq\!\eset\big\}
=\eset\quad\forall~\si\!\in\!\ale_c(X)\!-\!\ale_c(U_c).$$ 
Combining this with~\e_ref{hrigid_e3}, we obtain
\begin{equation*}
\ale_{h(c)}(U_x)\subset
\bigcup_{\si\in\ale_c(X)}\!\!\!\!\!\!\ale_{h(\si)}(U_x)
=\bigcup_{\si\in\ale_c(U_c)}\!\!\!\!\!\!\!\ale_{h(\si)}(U_x)
\subset\bigcup_{\si\in\ale_c(U_c)}\!\!\!\!\!\!\!\ale_{h(\si)}(U_{\si}).
\end{equation*}
Since the last set above is finite, we conclude that $h(c)\!\in\!\Scl_p(X;R)$.\\

\noindent
It is immediate that the map~$h$ in~\e_ref{hrigid_e3} is a homomorphism of $R$-modules
and restricts to~\e_ref{hrigid_e}.
By~\e_ref{hrigid_e3} and~\e_ref{hrigid_e2},
\begin{equation*}\begin{split}
&\big\{\tau\!\in\!\Hom(\De^p,X)\!:\big\{\!h(c)\!\big\}(\tau)\!\neq\!0\big\}
\subset\bigcup_{\si\in\ale_c(X)}\!\!\!\!\!\!
\big\{\tau\!\in\!\Hom(\De^p,X)\!:\big\{\!h(\si)\!\big\}(\tau)\!\neq\!0\big\}\\
&\qquad\subset \!\!\!
\bigcup_{\begin{subarray}{c}\si\in\Hom(\De^k,X)\\ \si(\De^k)\subset\supp(c)\end{subarray}}
\hspace{-.32in}
\big\{\tau\!\in\!\Hom(\De^p,X)\!:\tau(\De^p)\!\subset\!\supp\big(h(\si)\big)\big\}
\subset 
\big\{\tau\!\in\!\Hom(\De^p,X)\!:\tau(\De^p)\!\subset\!\supp(c)\big\}.
\end{split}\end{equation*}
This establishes~\e_ref{hrigid_e5}.
\end{proof}

\noindent
In the notation of~\e_ref{BMch_e}, 
$$h(c)=\sum_{\si\in\Hom(\De^k,X)}\!\!\!\!\!\!\!\!\!\!\!\!a_{\si}h(\si)\,.$$
Each $h(\si)$ is a formal sum.
By the first part of the proof of Lemma~\ref{hrigid_lmm},
each $p$-simplex~$\tau$ appears in only finitely many chains~$h(\si)$.
Thus, the implicit double sum above can be reduced to a single sum as in~\e_ref{BMch_e}.
By the second part of the proof of Lemma~\ref{hrigid_lmm}, $h(c)$ satisfies 
the required local finiteness condition.\\

\noindent
A map
\BE{hmap_e2}\hb\!:\Hom(\De^k,X)\lra \Scl_p(\De^k;R)\!=\!S_p(\De^k;R)\EE
induces a rigid map
\BE{hmap_e2b}h\!:\Hom(\De^k,X)\lra S_p(X;R), \qquad 
h(\si)=\si_{\#}\big(\hb(\si)\!\big),\EE
and thus a homomorphism
$$h\!\equiv\!\hb_{\#}\!: \Scl_k(X;R)\lra \Scl_p(X;R).$$

\vspace{.2in}

\noindent
If $k\!\in\!\Z^+$, the \sf{boundary homomorphism} 
\BE{prtBMdfn_e}\prt_X\!: \Scl_k(X;R)\lra \Scl_{k-1}(X;R), \quad
\prt_X\!\!\!\!\!\!\!\!\!\sum_{\si\in\hbox{Hom}(\De^k,X)}\!\!\!\!\!\!\!\!\!\!\!\!\!a_{\si}\si
=\sum_{\si\in\hbox{Hom}(\De^k,X)}\sum_{p=0}^k(-1)^pa_{\si}\big(\si\!\circ\!\io_{k;p}\big)\EE
is induced by the constant map
$$\hb\!:\Hom(\De^k,X)\lra \Scl_{k-1}(\De^k;R), \quad
\hb(\si)=\prt_{\De^k}\id_{\De^k}\equiv\sum_{p=0}^k(-1)^p\io_{k;p}\,.$$
By Lemma~\ref{hrigid_lmm}, the homomorphism~\e_ref{prtBMdfn_e} is thus well-defined.
We define~$\prt_X$ on $\Scl_0(X;R)$ to be the zero homomorphism.
It is immediate that $\prt_X^2\!=\!0$. 
The quotient
$$\Hcl_k(X;R)=\frac{\ker(\prt_X\!:\Scl_k(X;R)\lra \Scl_{k-1}(X;R))}
{\Im(\prt_X\!:\Scl_{k+1}(X;R)\lra \Scl_k(X;R))}$$
is the \sf{$k$-th Borel-Moore homology module of~$X$ with coefficients in~$R$}.
If $X$ is compact, $(\Scl_*(X;\R),\prt_X)$ is the usual singular chain complex 
$(S_*(X;R),\prt_X)$ and the Borel-Moore homology modules are the standard homology modules
with coefficients in~$R$.\\

\noindent
For $q\!\in\!\Z^{\ge0}$, let 
$$S^q(X;R)\equiv\Hom_{\R}\big(S_q(X;R),R\big)$$ 
denote the usual $R$-module of the $R$-valued $p$-cochains on~$X$.
For each $\al\!\in\!S^q(X;R)$, the~map
$$\al\cap\!:\Hom(\De^{p+q},X)\lra \Scl_p(X;R), \quad
\al\!\cap\!\si=\al({}\si^q)\,{}^{p\!}\si,$$
where ${}^{p\!}\si = \si(\CH(e_0, \dots ,e_p))$ and $\si^q = \si(\CH(e_{p+1}, \dots ,e_{p+q}))$ are the $p$-th front and $q$-th back faces,
respectively,
of a singular  \hbox{$(p\!+\!q)$-simplex~$\si$}, is rigid.
By Lemma~\ref{hrigid_lmm}, this map thus induces a homomorphism
\BE{Scapdfn_e}\cap\!: S^q(X;R)\!\otimes_R\!\Scl_{p+q}(X;R)\lra \Scl_p(X;R),
\quad \al\!\otimes\!\mu\mapsto\al\!\cap\!\mu\,.\EE
This \sf{cap product} restricts to the cap product on $S^q(X;R)\!\otimes_R\!S_{p+q}(X;R)$
in the standard singular theory defined in \cite[Section~66]{Mu2}.
The homomorphism~\e_ref{Scapdfn_e} satisfies
\BE{capbd_e} \prt_X\big(\al\!\cap\!\mu\big)=(-1)^p(\de_X\al)\!\cap\!\mu
+\al\!\cap\!(\prt_X\mu)\qquad
\forall\,\al\!\in\!S^p(X;R),\,\mu\!\in\!\Scl_{p+q}(X;R),\EE
where $\de_X\!=\!\prt_X^{\,*}$.
Thus, \e_ref{Scapdfn_e} descends to a homomorphism
$$\cap\!: H^q(X;R)\!\otimes_R\!\Hcl_{p+q}(X;R)\lra \Hcl_p(X;R).$$

\subsection{Basic properties}
\label{BMprp_subs}

\noindent
Let $X$ be a topological space.
We call a collection of maps
\BE{hbrigid_e} \hb\!:\Hom(\De^k,X)\lra S_*(\De^k;R),
\quad k\!\in\!\Z^{\ge0},\EE
a \sf{pre-chain map} if 
\BE{hbrigid_e4}\prt_{\De^k}\big(\hb(\si)\!\big)
=\sum_{p=0}^k(-1)^p\big\{\io_{k;p}\big\}_{\#}\big(\hb(\si\!\circ\!\io_{k;p})\!\big)
\quad\forall~\si\!\in\!\Hom(\De^k,X),~k\!\in\!\Z^{\ge0}.\EE
A pre-chain map~$\hb$ determines a chain map
\BE{hbrigid_e4b}\hb_{\#}\!:\Scl_*(X;R)\lra\Scl_*(X;R),\EE
not necessarily preserving the grading, via \e_ref{hmap_e2b} and Lemma~\ref{hrigid_lmm}.
A linear combination of pre-chain maps is a pre-chain map.\\

\noindent
Let $\hb$ be a collection of maps as in~\e_ref{hbrigid_e}.
A~\sf{null-homotopy for~$\hb$} is a~collection of maps
$$D_{\hb}\!:\Hom(\De^k,X)\lra S_{*+1}(\De^k;R), \quad k\!\in\!\Z^{\ge0},$$
such~that
\BE{hbrigid_e7}
\prt_{\De^k}\big(D_{\hb}(\si)\!\big)=\hb(\si)-
\sum_{p=0}^k(-1)^p\big\{\io_{k;p}\big\}_{\#}
\big(D_{\hb}(\si\!\circ\!\io_{k;p})\!\big)
\quad\forall~\si\!\in\!\Hom(\De^k,X),~k\!\in\!\Z^{\ge0}.\EE
In such a case,
$$\hb_{\#}=\prt_XD_{\hb\,\#}+D_{\hb\,\#}\prt_X\!: \Scl_*(X;R)\lra\Scl_*(X;R),$$
i.e.~$D_{\hb\#}$ is a chain homotopy from~$\hb_{\#}$ to the zero homomorphism.

\begin{lmm}\label{Dhb_lmm}
Let $X$ be a topological space and 
\BE{Dhb_e0a}\hb\!:\Hom(\De^k,X)\lra S_k(\De^k;R), 
\qquad k\!\in\!\Z^{\ge0},\EE
be a pre-chain map.
If $\hb$ vanishes on $\Hom(\De^0,X)$, then there exists a null-homotopy
$$ D_{\hb}\!:\Hom(\De^k,X)\lra S_{k+1}(\De^k;R), \qquad k\!\in\!\Z^{\ge0}\,,$$
for~$\hb$.
\end{lmm}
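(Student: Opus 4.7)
The plan is to build $D_\hb$ by induction on the dimension $k$. For $k=0$, since $\hb$ vanishes on $\Hom(\De^0,X)$ by hypothesis, I simply set $D_\hb(\si) := 0$ for every $\si \in \Hom(\De^0, X)$; both sides of \eqref{hbrigid_e7} are then zero at this level.

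For the inductive step, assume $D_\hb$ has been defined on $\Hom(\De^j, X)$ for every $j < k$ so that \eqref{hbrigid_e7} is satisfied in that range. For each $\si \in \Hom(\De^k, X)$, form the candidate boundary
$$Z_\si := \hb(\si) - \sum_{p=0}^{k} (-1)^p \{\io_{k;p}\}_{\#} D_\hb(\si \circ \io_{k;p}) \in S_k(\De^k; R),$$
so that \eqref{hbrigid_e7} at level $k$ is equivalent to finding a chain $D_\hb(\si) \in S_{k+1}(\De^k; R)$ with $\prt_{\De^k} D_\hb(\si) = Z_\si$.

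The main technical step is verifying that $Z_\si$ is a cycle. Expanding $\prt_{\De^k}(\hb(\si))$ via the pre-chain identity \eqref{hbrigid_e4} and each $\prt_{\De^{k-1}} D_\hb(\si \circ \io_{k;p})$ via the inductive instance of \eqref{hbrigid_e7}, the terms containing $\hb(\si \circ \io_{k;p})$ cancel in pairs, leaving
$$\prt_{\De^k} Z_\si = \sum_{p=0}^{k}\sum_{q=0}^{k-1} (-1)^{p+q} \{\io_{k;p}\!\circ\!\io_{k-1;q}\}_{\#} D_\hb(\si \circ \io_{k;p} \circ \io_{k-1;q}).$$
For the indices with $q < p$, one applies the standard simplicial identity $\io_{k;p}\!\circ\!\io_{k-1;q} = \io_{k;q}\!\circ\!\io_{k-1;p-1}$ and then reindexes by $(p',q') := (q, p-1)$; the $q < p$ subsum is thereby converted into the negative of the $q \geq p$ subsum, so $\prt_{\De^k} Z_\si = 0$. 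This is the same $\prt^2 = 0$ bookkeeping that underlies the verification that the singular boundary squares to zero, and it is the main obstacle in the argument.

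Once $Z_\si$ is known to be a cycle, the contractibility of $\De^k$ furnishes the required preimage. Explicitly, let $b_k \ast \colon S_k(\De^k; R) \to S_{k+1}(\De^k; R)$ denote the cone operator from the barycenter $b_k$ of $\De^k$, which for $k \geq 1$ satisfies $\prt_{\De^k}(b_k \ast c) = c - b_k \ast \prt_{\De^k} c$. Setting $D_\hb(\si) := b_k \ast Z_\si$ then gives $\prt_{\De^k} D_\hb(\si) = Z_\si$, verifying \eqref{hbrigid_e7} at level $k$ and completing the induction. The cycle verification is the only non-formal step; once it is carried out carefully, the rest of the construction is essentially bookkeeping.
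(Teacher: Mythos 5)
Your proposal is correct and follows essentially the same route as the paper: induct on $k$, form the candidate boundary $Z_\si$, verify it is a cycle by combining the pre-chain identity \e_ref{hbrigid_e4} with the inductive instance of \e_ref{hbrigid_e7} (the same cancellation the paper performs, just organized in the opposite order), and then solve $\prt_{\De^k}D_{\hb}(\si)=Z_\si$ using the acyclicity of $\De^k$ — your explicit barycentric cone operator is simply a constructive substitute for the paper's appeal to $H_k(\De^k;R)=0$. The only cosmetic difference is that the paper treats $k=1$ as a separate (degenerate) case, which in your setup is covered automatically since $\hb$ and $D_{\hb}$ both vanish on $\Hom(\De^0,X)$.
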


\begin{proof}
We take $D_{\hb}\!=\!0$ on $\Hom(\De^0,X)$.
Suppose $k\!\in\!\Z^+$ and we have constructed $D_{\hb}$ on $\Hom(\De^l,X)$
with $l\!<\!k$ so that it satisfies~\e_ref{hbrigid_e7} 
on $\Hom(\De^l,X)$ with $l\!<\!k$.
Let \hbox{$\si\!\in\!\Hom(\De^k,X)$} and
$$c_{\si}=\hb(\si)- \sum_{p=0}^k(-1)^p
\io_{k;p\,\#}\big(D_{\hb}(\si\!\circ\!\io_{k;p})\big).$$
For $k\!\ge\!2$,  the inductive assumption gives
\begin{equation*}\begin{split}
\prt_{\De^k}(c_{\si})
&=\prt_{\De^k}\big(\hb(\si)\!\big)-
\sum_{p=0}^k(-1)^p\io_{k;p\,\#}
\big(\prt_{\De^{k-1}}D_{\hb}(\si\!\circ\!\io_{k;p})\!\big)\\
&=\prt_{\De^k}\big(\hb(\si)\!\big)-
\sum_{p=0}^k(-1)^p\io_{k;p\,\#}\bigg(\!\!\hb(\si\!\circ\!\io_{k;p})
-\sum_{q=0}^{k-1}(-1)^q\io_{k-1;q\,\#}
D_{\hb}\big(\si\!\circ\!\io_{k;p}\!\circ\!\io_{k-1;q}\big)\!\!\bigg)\,.
\end{split}\end{equation*}
The terms in the double sum cancel in pairs, while the remaining difference
vanishes by~\e_ref{hbrigid_e4}.
For $k\!=\!1$,  \e_ref{hbrigid_e4} and the vanishing of~$\hb$ and~$D_{\hb}$ on~$\Hom(\De^0,X)$
imply~that 
$$\prt_{\De^k}c_{\si}=0$$
in this case as well. Since $H_k(\De^k;R)$ is trivial, there exists
$$D_{\hb}\si\in S_{k+1}(\De^k;R)
\qquad\hbox{s.t.}\quad 
\prt_{\De^k}\big(D_{\hb}(\si)\!\big)=c_{\si}\,.$$
This completes the inductive step.
\end{proof}

\noindent
A Hausdorff topological space $X'$ is \sf{locally compact} if
for every point $x\!\in\!X'$ there exists an open neighborhood~$U_x$ of~$x$ in~$X'$ 
such that the closure~$\ov{U_x}$ of~$U_x$ in~$X'$ is compact
(if $X'$ is not necessarily Hausdorff, there are various versions of this definition
that are equivalent for Hausdorff spaces).

\begin{lmm}\label{BMpush_lmm}
Let $f\!:X\!\lra\!X'$ be a proper map between topological spaces.
If either $X$ is compact or $X'$ is locally compact, 
then the~map 
\BE{BMpush_e3}\begin{split}  
&\hspace{1in} f_{\#}\!:\Scl_*(X;R)\lra\Scl_*(X';R), \\
&\big\{f_{\#}(c)\!\big\}(\tau)= \sum_{\si\in\Hom(\De^k,X)}\!\!\!\!\!\!\!\!\!\!\!\!
c(\si)\big\{\!f\!\circ\!\si\!\big\}(\tau)\in R
\qquad\forall~\tau\!\in\!\Hom(\De^p,X'),
\end{split}\EE
is a well-defined homomorphism of chain complexes  and
\BE{BMpush_e5} 
\supp\big(f_{\#}(c)\!\big)\subset f\big(\supp(c)\!\big)\qquad\forall~c\!\in\!\Scl_k(X;R).\EE
If $g\!:X'\!\lra\!X''$ is another proper continuous map and 
either $X'$ is compact or $X''$ is locally compact, then
\BE{BMpush_e6}\big(g\!\circ\!f\big)_{\#}\!=\!g_{\#}\!\circ\!f_{\#}\!:
\Scl_*(X;R)\lra\Scl_*(X'';R).\EE
\end{lmm}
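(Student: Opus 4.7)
The plan is to verify directly that the formula \e_ref{BMpush_e3} produces an element of $\Scl_*(X';R)$ for each $c\!\in\!\Scl_k(X;R)$, and then to check that $f_{\#}$ is a chain map satisfying \e_ref{BMpush_e5} and \e_ref{BMpush_e6}. Since each $f\!\circ\!\sigma$ with $\sigma\!\in\!\Hom(\De^k,X)$ is a singular $k$-simplex on $X'$, the quantity $\{f\!\circ\!\sigma\}(\tau)$ in \e_ref{BMpush_e3} is $1$ if $\tau\!=\!f\!\circ\!\sigma$ and $0$ otherwise; in particular, $\{f_{\#}(c)\}(\tau)$ vanishes unless $p\!=\!k$, in which case it reduces to $\sum_{\sigma\colon f\circ\sigma=\tau} c(\sigma)$.

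The heart of the argument is to verify two finiteness assertions: (i)~this sum is finite for each fixed $\tau$, and (ii)~the resulting chain $f_{\#}(c)$ is locally finite on $X'$. If $X$ is compact, local finiteness of $c$ combined with a finite subcover of $X$ by neighborhoods $U_x$ as in the definition of $\Scl_k(X;R)$ shows that $\ale_c(X)$ is already finite, so both (i)~and~(ii) are automatic. If instead $X'$ is locally compact, then for (i)~I~would observe that $\tau(\De^k)$ is compact in $X'$, so $K\!\equiv\!f^{-1}(\tau(\De^k))$ is compact in $X$ by properness; covering $K$ by finitely many open sets $U_{x_i}$ on which $\ale_c(U_{x_i})$ is finite yields a finite set that contains every $\sigma$ with $c(\sigma)\!\neq\!0$ and $\sigma(\De^k)\!\subset\!K$, in particular every $\sigma$ with $f\!\circ\!\sigma\!=\!\tau$. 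For~(ii), given $x'\!\in\!X'$ I~would pick an open neighborhood $V\!\subset\!X'$ of~$x'$ with compact closure, apply properness to see that $f^{-1}(\ov{V})$ is compact, and cover it by finitely many $U_{x_i}$ as before; any $\tau$ contributing to $\ale_{f_{\#}(c)}(V)$ must equal $f\!\circ\!\sigma$ for some $\sigma$ with $c(\sigma)\!\neq\!0$ and $\sigma(\De^k)\!\cap\!f^{-1}(V)\!\neq\!\eset$, hence $\sigma$ lies in the finite set $\bigcup_i\ale_c(U_{x_i})$ and $\tau$ ranges over a finite set.

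The remaining verifications are routine. The chain-map property $\prt_{X'}\!\circ\!f_{\#}\!=\!f_{\#}\!\circ\!\prt_X$ reduces, after interchanging summation orders (which is justified by the finiteness in~(i)), to the identity $\prt_{X'}(f\!\circ\!\sigma)\!=\!\sum_p(-1)^p f\!\circ\!\sigma\!\circ\!\io_{k;p}\!=\!f_{\#}(\prt_X\sigma)$ on each generator. The support inclusion \e_ref{BMpush_e5} is immediate: if $\{f_{\#}(c)\}(\tau)\!\neq\!0$, there is some $\sigma$ with $c(\sigma)\!\neq\!0$ and $\tau\!=\!f\!\circ\!\sigma$, so $\tau(\De^k)\!=\!f(\sigma(\De^k))\!\subset\!f(\supp(c))$. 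For functoriality \e_ref{BMpush_e6}, I would compute both sides on an arbitrary $\rho\!\in\!\Hom(\De^k,X'')$ and observe that each equals $\sum_{\sigma\colon g\circ f\circ\sigma=\rho} c(\sigma)$; the intermediate sum $\sum_\tau\sum_\sigma$ in $g_{\#}(f_{\#}(c))(\rho)$ is again finite by~(i).

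The main obstacle is step~(ii), where properness and local compactness must be used in an essential way to transfer the local finiteness condition from $X$ across $f$ to $X'$: without these hypotheses, one could have infinitely many $\sigma$ with $c(\sigma)\!\neq\!0$ whose images meet an arbitrarily small neighborhood of some $x'\!\in\!X'$. The rest of the proof is bookkeeping once this finiteness is in hand.
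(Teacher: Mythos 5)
Your proposal is correct and takes essentially the same approach as the paper: finiteness of the sum defining $\{f_{\#}(c)\}(\tau)$ follows from properness applied to the compact set $f^{-1}(\tau(\De^k))$, local finiteness of $f_{\#}(c)$ follows from properness applied to the compact closure of a small neighborhood in the locally compact $X'$ (bounding $\ale_{f_{\#}(c)}$ by $\ale_c(f^{-1}(\ov{V}))$), and the chain-map property, the support inclusion, and functoriality are checked directly. The only cosmetic difference is that the paper treats the compact-$X$ case by factoring through the standard pushforward $S_*(X;R)\!\lra\!S_*(X';R)$ followed by the inclusion into $\Scl_*(X';R)$, whereas you observe that $\ale_c(X)$ is then finite; these amount to the same thing.
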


\begin{proof} If $X$ is compact, the map~\e_ref{BMpush_e3} is the composition
$$\Scl_*(X;R)\!=\!S_*(X;R) \lra S_*(X';R)\lra\Scl_*(X';R).$$
The first arrow above is the pushforward homomorphism 
of the standard singular homology theory.\\

\noindent
For all $c\!\in\!\Scl_k(X;R)$ and $\tau\!\in\!\Hom(\De^p,X')$,
\begin{equation*}\begin{split}
\big\{\si\!\in\!\Hom(\De^k,X)\!:c(\si)\big\{f\!\circ\!\si\!\big\}(\tau)\!\neq\!0\big\}
=\big\{\si\!\in\!\Hom(\De^k,X)\!:c(\si)\!\neq\!0,\,
\tau\!=\!f\!\circ\!\si\big\}&\\
\subset \big\{\si\!\in\!\Hom(\De^k,X)\!:c(\si)\!\neq\!0,\,
\si(\De^k)\!\cap\!f^{-1}(\tau(\De^p))\!\neq\!\eset\big\}\subset 
\ale_c\big(f^{-1}(\tau(\De^p)\!)\!\big)&.
\end{split}\end{equation*}
If $f$ is a proper map, then  $f^{-1}(\tau(\De^p))$  is a compact subset of~$X$
and thus the last set above is finite.
This implies that the sum in~\e_ref{BMpush_e3} is finite.\\

\noindent
For all $c\!\in\!\Scl_k(X;R)$ and $U\!\subset\!X'$,
$$\ale_{f_{\#}(c)}(U) \subset
\big\{f\!\circ\!\si\!:\si\!\in\!\Hom(\De^k,X),\, c(\si)\!\neq\!0,\,
f\big(\si(\De^k)\big)\!\cap\!\ov{U}\neq\!\eset\big\}
=\big\{f\!\circ\!\si\!:\si\!\in\!\ale_c\big(f^{-1}(\ov{U})\!\big)\!\big\}.$$
If $f$ is a proper map and $\ov{U}\!\subset\!X'$ is compact, then $f^{-1}(\ov{U})$ is 
a compact subset of~$X$ and thus the last set above is finite.
This implies that $f_{\#}(c)\!\in\!\Scl_k(X;R)$ if in addition 
$X'$ is locally compact.\\

\noindent
It is immediate that the map~$f_{\#}$ in~\e_ref{BMpush_e3} is a homomorphism of $R$-modules
intertwining~$\prt_X$ and~$\prt_{X'}$ and that~\e_ref{BMpush_e6} holds.
Furthermore,
$$\big\{\tau\!\in\!\Hom(\De^p,X')\!:\big\{\!f_{\#}(c)\!\big\}(\tau)\!\neq\!0\big\}
\subset \big\{f\!\circ\!\si\!:\si\!\in\!\Hom(\De^k,X),\,c(\si)\!\neq\!0\big\}
\quad\forall\,c\!\in\!\Scl_k(X;R).$$
This establishes~\e_ref{BMpush_e5}.
\end{proof}

\noindent
In the notation of~\e_ref{BMch_e}, 
$$f_{\#}(c)=\sum_{\si\in\Hom(\De^k,X)}\!\!\!\!\!\!\!\!\!\!\!\!a_{\si}(f\!\circ\!\si)\,.$$
By the second paragraph in the proof of Lemma~\ref{BMpush_lmm},
each $p$-simplex~$\tau$ in~$X'$ appears only finitely many times in this sum.
Thus, the sum above can be reduced to a sum as in~\e_ref{BMch_e}.
By the third paragraph in the proof of Lemma~\ref{BMpush_lmm}, $f_{\#}(c)$ satisfies 
the required local finiteness condition.
The corollary below is an immediate consequence of Lemma~\ref{BMpush_lmm}.

\begin{crl}\label{BMpush_crl}
Let $f\!:X\!\lra\!X'$ be a proper map between topological spaces.
If either $X$ is compact or $X'$ is locally compact, then 
the composition of the $k$-simplicies to~$X$ with~$f$ induces a homomorphism
$$f_*\!: \Hcl_k(X;R)\lra\Hcl_k(X';R)\,.$$
If $g\!:X'\!\lra\!X''$ is another proper continuous map and 
either $X'$ is compact or $X''$ is locally compact, then
$$\big(g\!\circ\!f\big)_*\!=\!g_*\!\circ\!f_*\!:
\Hcl_*(X;R)\lra\Hcl_*(X'';R).$$
\end{crl}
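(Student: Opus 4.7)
The plan is to derive the corollary directly from Lemma~\ref{BMpush_lmm} in the standard chain-complex-to-homology manner; there is essentially no new content beyond unwinding definitions. First I would note that the homomorphism
$$f_{\#}\!:\Scl_*(X;R)\lra\Scl_*(X';R)$$
supplied by Lemma~\ref{BMpush_lmm} is a chain map, i.e.~it intertwines the boundary operators $\prt_X$ and $\prt_{X'}$. Consequently $f_{\#}$ sends $\ker(\prt_X)$ into $\ker(\prt_{X'})$ and $\Im(\prt_X)$ into $\Im(\prt_{X'})$, so it descends to a well-defined homomorphism $f_*\!:\Hcl_k(X;R)\!\to\!\Hcl_k(X';R)$ defined on a class $[c]$ by $f_*[c]\!=\![f_{\#}(c)]$. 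On the level of $k$-simplices this is exactly the composition $\si\!\mapsto\!f\!\circ\!\si$ extended $R$-linearly, as required by the statement.

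For the functoriality assertion, I would invoke the identity
$$\big(g\!\circ\!f\big)_{\#}=g_{\#}\!\circ\!f_{\#}\!:\Scl_*(X;R)\lra\Scl_*(X'';R)$$
from~\e_ref{BMpush_e6} of Lemma~\ref{BMpush_lmm}. Passing to homology, this yields $(g\!\circ\!f)_*\!=\!g_*\!\circ\!f_*$ on $\Hcl_*$. The hypotheses on compactness or local compactness of the intermediate spaces are precisely what is needed to apply Lemma~\ref{BMpush_lmm} to each of $f$, $g$, and $g\!\circ\!f$ separately; note that a composition of proper maps between spaces satisfying these hypotheses is again proper, so $(g\!\circ\!f)_{\#}$ is itself well-defined by the lemma.

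There is no genuine obstacle here: the only point worth verifying carefully is that the compactness/local-compactness hypotheses propagate correctly through the composition, but this is routine since proper maps compose and the hypothesis on $X''$ being locally compact (or $X'$ being compact) directly covers the composition $g\!\circ\!f$. The entire proof therefore reduces to the observation that $f_{\#}$ is a chain map and that $(\cdot)_{\#}$ is functorial at the chain level.
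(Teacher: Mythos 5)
Your proposal is correct and matches the paper, which simply records this corollary as an immediate consequence of Lemma~\ref{BMpush_lmm}: $f_{\#}$ is a chain map, so it descends to homology, and~\e_ref{BMpush_e6} gives functoriality. Nothing further is needed.
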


\subsection{Subcomplexes and quotients}
\label{BMsubcom_subs}

\noindent
For a collection $\cA$ of subsets of a topological space~$X$, let
$$\Scl_{\cA;*}(X;R)\subset \Scl_*(X;R)$$
denote the subset of chains~$c$ such that 
$$\big\{\si\!\in\!\Hom(\De^k,X)\!:c(\si)\!\neq\!0\big\}
\subset \bigcup_{U\in\cA}\!\!\Hom(\De^k,U)
\quad\forall\,k\!\in\!\Z^{\ge0}\,.$$
This subset is a chain sub-complex of $\Scl_*(X;R)$.
We denote its homology by~$\Hcl_{\cA;*}(X;R)$.
Let 
$$\Scl_*\big(X,\cA;R\big)=\frac{\Scl_*(X;R)}{\Scl_{\cA;*}(X;R)}$$
be the quotient complex and $\Hcl_*\big(X,\cA;R\big)$
be its homology.
If $W\!\subset\!X$ contains every $U\!\in\!\cA$, then
$\Scl_{\cA;*}(X;R)$ is a sub-complex of~$\Scl_{\{W\};*}(X;R)$.
In such a case, let 
$$\Scl_{\{W\};*}\big(X,\cA;R\big)=
\frac{\Scl_{\{W\};*}(X;R)}{\Scl_{\cA;*}(X;R)}$$
be the quotient complex and $\Hcl_{\{W\};*}\big(X,\cA;R\big)$
be its homology.\\

\noindent
By definition, $\Scl_{\{X\};*}(X;R)\!=\!\Scl_*(X;R)$.
If $U\!\subset\!W\!\subset\!X$ and $\ov{W}\!\subset\!X$ is compact, then
\BE{SclWcmpt_e}\Scl_{\{W\};*}(X,\{U\};R)=S_*(W,U;R)\EE
is the standard relative simplicial complex for the pair~$(W,U)$.
If $\cA$ is a collection of subsets of~$X$ and
$\{W_U\!:U\!\in\cA\}$ is a locally finite collection of disjoint subsets of~$X$ with union~$W$
so that $U\!\subset\!W_U$ for every $U\!\in\!\cA$,
then
\BE{SclWdisj_e}
\Scl_{\{W\};*}\big(X,\cA;R\big)=\prod_{U\in\cA}\!\!
\Scl_{\{W_U\};*}\big(X,\{U\};R\big).\EE

\begin{lmm}\label{BMpart_lmm}
Let $X$ be a topological space and $\cA$ be a collection 
of subsets of~$X$ with union \hbox{$W\!\subset\!X$}.
If
$$W=\bigcup_{U\in\cA}\!\!\big(\Int\!_WU\big)\,,$$
there exists a pre-chain map as in~\e_ref{Dhb_e0a}
such that 
\begin{equation}\label{BMpart_e2}
\si_{\#}\big(\hb(\si)\!\big)\in\Scl_{\cA;*}(X;\R)~\forall\,
\si\!\in\!\Hom(\De^k,W),\quad
\hb(\si)\!=\!\id_{\De^k}~\forall\,\si\!\in\!\Hom(\De^k,U),\,U\!\in\!\cA.
\end{equation}
\end{lmm}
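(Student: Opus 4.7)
The plan is to adapt the classical barycentric-subdivision argument (the engine behind excision in singular homology) to the rigid pre-chain-map language of this section, so that it survives the locally-finite setting via Lemma~\ref{hrigid_lmm}. Let $S\!:\!S_*(\De^k;R)\!\to\!S_*(\De^k;R)$ denote the iterated barycentric subdivision chain map and $T\!:\!S_*(\De^k;R)\!\to\!S_{*+1}(\De^k;R)$ the standard prism chain homotopy satisfying $\prt_{\De^k}T\!+\!T\prt_{\De^k}\!=\!\id\!-\!S$; both are natural under the face inclusions $\io_{k;p}$ and rigid in the sense that the support of the output is contained in the support of the input.

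For each $\si\!\in\!\Hom(\De^k,X)$, I would define $m(\si)\!\in\!\Z^{\ge0}$ to be $0$ either when $\si(\De^k)$ lies in some $U\!\in\!\cA$ or when $\si(\De^k)\!\not\subset\!W$, and otherwise to be the smallest positive integer such that every simplex appearing in $S^{m(\si)}(\id_{\De^k})$ has image under $\si$ contained in some $\Int_W\!U$ with $U\!\in\!\cA$. The hypothesis $W\!=\!\bigcup_U\!\Int_W\!U$, the compactness of $\De^k$, and the Lebesgue number lemma together ensure $m(\si)\!<\!\infty$ for $\si\!\in\!\Hom(\De^k,W)$; naturality of $S$ under face inclusions yields $m(\si\!\circ\!\io_{k;p})\!\le\!m(\si)$ in that case. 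Setting $\wt{D}(\si)\!:=\!\sum_{0\le i<m(\si)}\!T(S^i(\id_{\De^k}))$, I would then define
\[\hb(\si):=\id_{\De^k}-\prt_{\De^k}\wt{D}(\si)-\sum_{p=0}^k(-1)^p\{\io_{k;p}\}_{\#}\wt{D}(\si\!\circ\!\io_{k;p})\in S_k(\De^k;R).\]
This visibly reduces to $\id_{\De^k}$ whenever $\si\!\in\!\Hom(\De^k,U)$ with $U\!\in\!\cA$, since then all the relevant $m$-values vanish and $\wt{D}$ vanishes at $\si$ and at every face.

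Verifying the pre-chain-map condition \e_ref{hbrigid_e4} amounts to expanding both $\prt_{\De^k}\hb(\si)$ and $\sum_p(-1)^p\{\io_{k;p}\}_{\#}\hb(\si\!\circ\!\io_{k;p})$; after cancellation, their difference reduces to the vanishing of $\sum_{p,q}(-1)^{p+q}\{\io_{k;p}\}_{\#}\{\io_{k-1;q}\}_{\#}\wt{D}(\si\!\circ\!\io_{k;p}\!\circ\!\io_{k-1;q})$, which pairs up and cancels via the standard face identity $\io_{k;p}\!\circ\!\io_{k-1;q}\!=\!\io_{k;q}\!\circ\!\io_{k-1;p-1}$ for $q\!<\!p$. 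For the $\cA$-smallness condition on $\si\!\in\!\Hom(\De^k,W)$, the chain-homotopy identity together with a telescoping computation rewrites
\[\si_{\#}\big(\hb(\si)\!\big)=S^{m(\si)}(\si)+\sum_{p=0}^k(-1)^p\!\!\!\sum_{m(\si\circ\io_{k;p})\le i<m(\si)}\!\!\!T\big(S^i(\si\!\circ\!\io_{k;p})\!\big),\]
and every summand on the right-hand side lies in $\Scl_{\cA;*}(X;R)$: the leading term by the defining property of $m(\si)$, and each correction term because $S^i(\si\!\circ\!\io_{k;p})$ is already $\cA$-small for $i\!\ge\!m(\si\!\circ\!\io_{k;p})$ while rigidity of $T$ preserves this containment. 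The main obstacle I anticipate is the combinatorial bookkeeping in the boundary expansion and the telescoping identity, in particular tracking the mismatched upper limits $m(\si)$ versus $m(\si\!\circ\!\io_{k;p})$ across the face terms; once these are handled, rigidity of $\hb$ and Lemma~\ref{hrigid_lmm} lift the construction cleanly to the locally-finite complex.
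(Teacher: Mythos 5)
Your construction is essentially the paper's own proof: after telescoping, your $\hb(\si)=\id_{\De^k}-\prt_{\De^k}\wt{D}(\si)-\sum_p(-1)^p\{\io_{k;p}\}_{\#}\wt{D}(\si\!\circ\!\io_{k;p})$ is exactly the paper's $\sd^{m(\si)}_{\De^k}\id_{\De^k}$ corrected by chain-homotopy terms over the ranges $m(\si\!\circ\!\io_{k;p})\le i<m(\si)$, with the same simplex-dependent subdivision exponent $m(\si)$ made finite by compactness and the same use of naturality/rigidity to verify \e_ref{hbrigid_e4} and \e_ref{BMpart_e2}. The only cosmetic difference is your extension convention for $\si$ not mapping into $W$ (the paper handles this in Remark~\ref{BMpart_rmk}), which is harmless since the cancellation argument does not need $m(\si\!\circ\!\io_{k;p})\le m(\si)$ there.
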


\begin{proof} This lemma is established in \cite[Appendix~I]{Vicks}
in different terminology.
For any topological space~$Y$, let 
$$\sd_{\!Y}\!:S_*(Y;R)\lra S_*(Y;R) \quad\hbox{and}\quad
D_Y\!:S_*(Y;R)\lra S_{*+1}(Y;R)$$
be the barycentric subdivision operator and a natural chain homotopy 
from~$\sd_{\!Y}$ to the identity on~$S_*(Y;R)$; see \cite[Section~31]{Mu2}.
In particular,
\begin{equation}\label{BMpart_e4}
\sd_{\!Y}\!-\!\id_{S_*(Y;R)}\!=\!\prt_YD_{\!Y}\!+\!D_{\!Y}\prt_Y\!:
S_*(Y;R)\lra S_*(Y;R).
\end{equation}
By \cite[Theorem~31.3]{Mu2}, 
$$m(\si)\equiv \min\!\big\{m\!\in\!\Z^{\ge0}\!:\sd_{\!X}^m\si\!\in\!\Scl_{\cA;*}(X;R)\!\big\}<\i
\qquad\forall\,\si\!\in\!\Hom(\De^k,W)\,.$$
In particular, $m(\si)\!=\!0$ if $\si\!\in\!\Scl_{\cA;*}(X;R)$ and
$m(\si\!\circ\!\io_{k;q})\!\le\!m(\si)$ for all $q\!=\!0,1,\ldots,k$.
Define~\e_ref{Dhb_e0a}~by
$$\hb(\si)=\sd_{\!\De^k}^{\!m(\si)}\id_{\De^k}-
D_{\De^k}\sum_{q=0}^k(-1)^q\!\!\!\!\!\!\!\!\!\!\sum_{r=m(\si\circ\io_{k;q})}^{m(\si)-1}
\hspace{-.22in}\sd_{\!\De^k}^r\io_{k;q}\in S_k(\De^k;R)\,.$$ 
By~\e_ref{BMpart_e4} and the naturality of~$\sd_Y$ and~$D_Y$, 
the collection of maps~$\hb$ with $k\!\in\!\Z^{\ge0}$ defined in this way is a pre-chain map.
By construction, this collection satisfies~\e_ref{BMpart_e2}.
\end{proof}

\begin{rmk}\label{BMpart_rmk}
The proof of Lemma~\ref{BMpart_lmm} defines a pre-chain map~$\hb$ as in~\e_ref{Dhb_e0a} 
only~on 
$$\Hom(\De^k,W)\subset\Hom(\De^k,X),$$ which suffices for our purposes below.
We can define~$\hb(\si)$ for~$\si$ in \hbox{$\Hom(\De^k,X)\!-\!\Hom(\De^k,W)$}
by taking $m(\si)\!=\!0$ 
if $\si$ does not map any of the simplicies of~$\De^k$ to~$W$
and the largest value of~$m(\si|_{\De'})$ taken over the simplicies $\De'\!\subset\!\De$
such that $\si(\De')\!\subset\!W$ if such a simplex~$\De'$ exists. 
\end{rmk}

\begin{crl}\label{BMpart_crl}
Let $X$ be a topological space and $\cA$ be a collection 
of subsets of~$X$ with union \hbox{$W\!\subset\!X$}. If
$$W=\bigcup_{U\in\cA}\!\!\big(\Int\!_WU\big)\,,$$
then the inclusion of $\Scl_{\cA;*}(X;R)$ into $\Scl_{\{W\};*}(X;R)$ is 
a chain homotopy equivalence.
If in addition \hbox{$W\!\subset\!Y\!\subset\!X$}, then the homomorphism
$$\Hcl_{\{Y\};*}(X,\cA;R)\lra \Hcl_{\{Y\};*}(X,\{W\};R)$$
induced by this inclusion is an isomorphism.
\end{crl}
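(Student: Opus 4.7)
The plan is to derive the first assertion from Lemma~\ref{BMpart_lmm} by producing a chain-level inverse $\hb_{\#}$ to the inclusion $\io\!\equiv\!\Scl_{\cA;*}(X;R)\!\hookrightarrow\!\Scl_{\{W\};*}(X;R)$, together with an adaptation of Lemma~\ref{Dhb_lmm} yielding a null-homotopy for $\hb\!-\!\id$. The second assertion then follows from the five lemma applied to the ladder of long exact sequences of pairs.

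Concretely, I would first let $\hb$ be the pre-chain map on $\Hom(\De^*,W)$ provided by Lemma~\ref{BMpart_lmm}, extended to $\Hom(\De^*,X)$ as in Remark~\ref{BMpart_rmk}. By Lemma~\ref{hrigid_lmm}, $\hb$ induces a chain map $\hb_{\#}\!:\Scl_*(X;R)\!\lra\!\Scl_*(X;R)$. The first condition in~(\ref{BMpart_e2}) implies that the restriction of $\hb_{\#}$ to $\Scl_{\{W\};*}(X;R)$ takes values in $\Scl_{\cA;*}(X;R)$, and the second condition implies that $\hb_{\#}(\si)\!=\!\si_{\#}(\id_{\De^k})\!=\!\si$ whenever $\si\!\in\!\Hom(\De^k,U)$ for some $U\!\in\!\cA$; hence $\hb_{\#}\!\circ\!\io\!=\!\id$ on $\Scl_{\cA;*}(X;R)$.

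For the opposite composition, I would consider the pre-chain map $\hb'(\si)\!\equiv\!\hb(\si)\!-\!\id_{\De^k}$ on $\Hom(\De^*,W)$. The hypothesis $W\!=\!\bigcup_{U\in\cA}\Int\!_WU$ gives $W\!=\!\bigcup_{U\in\cA}U$, so any $\si\!\in\!\Hom(\De^0,W)$ lies in $\Hom(\De^0,U)$ for some $U\!\in\!\cA$; thus $\hb(\si)\!=\!\id_{\De^0}$ and $\hb'$ vanishes on $\Hom(\De^0,W)$. The inductive argument of Lemma~\ref{Dhb_lmm} adapts verbatim to pre-chain maps defined only on $\Hom(\De^*,W)$, since the iterated face restrictions $\si\!\circ\!\io_{k;p}$ of a simplex whose image lies in $W$ again land in $W$; this yields a null-homotopy $D_{\hb'}$ of~$\hb'$, and the associated chain homotopy $D_{\hb'\#}$ exhibits $\hb_{\#}\!-\!\id$ as null-homotopic on $\Scl_{\{W\};*}(X;R)$. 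Hence $\io$ and $\hb_{\#}$ are mutually inverse chain homotopy equivalences.

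For the second assertion, the inclusions $\Scl_{\cA;*}(X;R)\!\subset\!\Scl_{\{W\};*}(X;R)\!\subset\!\Scl_{\{Y\};*}(X;R)$ fit into a commutative diagram of short exact sequences of chain complexes
\begin{gather*}
0\lra\Scl_{\cA;*}(X;R)\lra\Scl_{\{Y\};*}(X;R)\lra\Scl_{\{Y\};*}(X,\cA;R)\lra 0,\\
0\lra\Scl_{\{W\};*}(X;R)\lra\Scl_{\{Y\};*}(X;R)\lra\Scl_{\{Y\};*}(X,\{W\};R)\lra 0,
\end{gather*}
whose left vertical is $\io$, middle vertical is the identity, and right vertical is the induced map on quotients. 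By the first part, $\io_*\!:\Hcl_{\cA;*}(X;R)\!\lra\!\Hcl_{\{W\};*}(X;R)$ is an isomorphism and the middle vertical is trivially so; the five lemma applied to the resulting ladder of long exact sequences in homology then yields that $\Hcl_{\{Y\};*}(X,\cA;R)\!\lra\!\Hcl_{\{Y\};*}(X,\{W\};R)$ is an isomorphism. The main subtlety I expect to have to check carefully is the adaptation of Lemma~\ref{Dhb_lmm} to pre-chain maps defined only on $\Hom(\De^*,W)$, which works precisely because $\Hom(\De^*,W)$ is closed under taking faces.
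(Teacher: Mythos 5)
Your proposal is correct and follows essentially the same route as the paper: the pre-chain map of Lemma~\ref{BMpart_lmm} (with Remark~\ref{BMpart_rmk}) induces $\hb_{\#}$ landing in $\Scl_{\cA;*}(X;R)$, Lemma~\ref{Dhb_lmm} applied to $\si\mapsto\hb(\si)\!-\!\id_{\De^k}$ gives the null-homotopy making it a chain homotopy inverse of the inclusion, and the second claim follows from the ladder of long exact sequences and the Five Lemma. The only cosmetic difference is that the paper applies Lemma~\ref{Dhb_lmm} to the map extended to all of $\Hom(\De^*,X)$ via Remark~\ref{BMpart_rmk} rather than adapting the induction to $\Hom(\De^*,W)$, which is an equivalent way of handling the same point.
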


\begin{proof} Let $\hb$ be the pre-chain map of Lemma~\ref{BMpart_lmm} 
(and Remark~\ref{BMpart_rmk}).
By Lemma~\ref{Dhb_lmm} applied to the pre-chain map
$$\Hom(\De^k,X)\lra S_k(\De^k;R),
\quad \si\mapsto \hb(\si)\!-\!\id_{\De^k}, \qquad k\!\in\!\Z^{\ge0},$$
the homomorphism 
$$\hb_{\#}\!:\Scl_{\{W\};*}(X;R)\lra \Scl_{\cA;*}(X;R)\!\subset\!
\Scl_{\{W\};*}(X;R)$$
induced by~$\hb$ is a chain homotopy inverse 
for the inclusion $\iota$ of $\Scl_{\cA;*}(X;R)$ into $\Scl_{\{W\};*}(X;R)$. That is, $D_{\hbar}$ restricts to the relevant subspaces by naturality (cf. \cite[Section 32]{Mu2}) and provides a null-homotopy of $\hbar_{\#}\circ \iota -\id$.\\

\noindent
The second claim follows from the commutativity of the diagram
$$\xymatrix{\ldots\ar[r]\ar[d]& \Hcl_{\cA;k}(X)\ar[r]\ar[d]^{\cong}&
\Hcl_{\{Y\};*}(X)\ar[r]\ar[d]^{\id}&
\Hcl_{\{Y\};k}(X,\cA)\ar[r]\ar[d]& \Hcl_{\cA;k-1}(X)\ar[r]\ar[d]^{\cong}\ar[r]\ar[d]
&\ldots\ar[d]^{\id}\\
\ldots\ar[r]&
\Hcl_{\{W\};k}(X)\ar[r]& \Hcl_{\{Y\};*}(X)\ar[r]&
\Hcl_{\{Y\};k}(X,\{W\})\ar[r]& \Hcl_{\{W\};k-1}(X)\ar[r]&\ldots}$$
where the rows are the long exact sequences for the pairs
$$\Scl_{\cA;*}(X;R)\subset\Scl_{\{Y\};*}(X;R) \quad\hbox{and}\quad 
\Scl_{\{W\};*}(X;R)\subset\Scl_{\{Y\};*}(X;R)$$
with the coefficient ring~$R$ omitted,
the first claim, and the Five Lemma.
\end{proof}

\noindent
For $U\!\subset\!W\!\subset\!X$, denote by 
$$\io_{W,U}\!:\Scl_{\{U\};*}(X;R)\lra\Scl_{\{W\};*}(X;R)
\quad\hbox{and}\quad
\io_{W,U*}\!:\Hcl_{\{U\};*}(X;R)\lra\Hcl_{\{W\};*}(X;R)$$
the inclusion homomorphism and the induced homomorphism on homology.
If in addition \hbox{$W\!\subset\!Y\!\subset\!X$}, denote~by
\begin{equation*}\begin{split}
j_{W,U}^Y\!:\Scl_{\{Y\};*}(X,\{U\};R)&\lra\Scl_{\{Y\};*}(X,\{W\};R)
\qquad\hbox{and}\\
j_{W,U*}^Y\!:\Hcl_{\{Y\};*}(X,\{U\};R)&\lra\Hcl_{\{Y\};*}(X,\{W\};R)
\end{split}\end{equation*}
the homomorphisms induced by the inclusion $U\!\subset\!W$ 
and the induced homomorphism on homology.

\begin{crl}[Mayer-Vietoris]\label{MVSpanier_crl}
Let $X$ be a topological space and $U,V\!\subset\!X$ be subsets such that 
$$U\!\cup\!V=\big(\Int\!_{U\cup V}\!U\big)\!\cup\!\big(\Int\!_{U\cup V}\!V\big).$$
Then there is a homomorphism
$$\prt\!:\Hcl_{\{U\cup V\};*}(X;R)\lra \Hcl_{\{U\cap V\};*-1}(X;R),$$
which is natural with respect to the homomorphisms induced by the admissible
inclusions \hbox{$U\!\subset\!U'$} and \hbox{$V\!\subset\!V'$}, so that 
the sequence
\begin{equation*}\begin{split}
\ldots\stackrel{\prt}{\lra} 
\Hcl_{\{U\cap V\};k}(X;R) &\xlra{(\io_{U,U\cap V*},\io_{V,U\cap V*})} 
\Hcl_{\{U\};k}(X;R)\!\oplus\!\Hcl_{\{V\};k}(X;R)\\
&\xlra{\io_{U\cup V,U*}-\io_{U\cup V,V}*} \Hcl_{\{U\cup V\};k}(X;R)
\stackrel{\prt}{\lra} \Hcl_{\{U\cap V\};k-1}(X;R) \lra\ldots
\end{split}\end{equation*}
of $R$-modules is exact.
\end{crl}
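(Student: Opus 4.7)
The plan is to realize this Mayer--Vietoris sequence as the long exact sequence in homology of a natural short exact sequence of Borel--Moore chain complexes, and then to replace the intermediate term by $\Hcl_{\{U\cup V\};*}(X;R)$ using Corollary~\ref{BMpart_crl}. Concretely, I would consider
\begin{equation*}
0 \lra \Scl_{\{U\cap V\};*}(X;R) \xlra{\alpha} \Scl_{\{U\};*}(X;R)\oplus\Scl_{\{V\};*}(X;R) \xlra{\beta} \Scl_{\{U,V\};*}(X;R) \lra 0,
\end{equation*}
where $\alpha(c)=(\io_{U,U\cap V}(c),\io_{V,U\cap V}(c))$ and $\beta(a,b)=a-b$, the target being naturally viewed as the subcomplex $\Scl_{\{U,V\};*}(X;R)\subset\Scl_{\{U\cup V\};*}(X;R)$.

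Injectivity of $\alpha$ is immediate. Exactness at the middle term follows because a chain in $\Scl_{\{U\};*}(X;R)$ that equals one in $\Scl_{\{V\};*}(X;R)$ can only involve simplices lying simultaneously in $\Hom(\De^k,U)$ and $\Hom(\De^k,V)$, hence in $\Hom(\De^k,U\cap V)$. For surjectivity of $\beta$, given $c\in\Scl_{\{U,V\};*}(X;R)$, I would partition the simplices appearing in $c$ into those mapping into $U$ (producing a chain $a\in\Scl_{\{U\};*}(X;R)$) and the remaining ones, which by definition map into $V$ (producing $-b\in\Scl_{\{V\};*}(X;R)$). The local finiteness of $a$ and $b$ is inherited from that of $c$, since $\ale_a(W),\ale_b(W)\subset\ale_c(W)$ for every open $W\subset X$.

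Passing to the long exact sequence in homology gives the desired sequence but with $\Hcl_{\{U,V\};*}(X;R)$ in place of $\Hcl_{\{U\cup V\};*}(X;R)$. The hypothesis $U\cup V=(\Int_{U\cup V}U)\cup(\Int_{U\cup V}V)$ is precisely what is required to apply Corollary~\ref{BMpart_crl} with $\cA=\{U,V\}$ and $W=U\cup V$, which shows that the inclusion $\Scl_{\{U,V\};*}(X;R)\hookrightarrow\Scl_{\{U\cup V\};*}(X;R)$ is a chain homotopy equivalence; composing the connecting map with the inverse of the induced isomorphism $\Hcl_{\{U,V\};*}(X;R)\cong\Hcl_{\{U\cup V\};*}(X;R)$ defines $\prt$. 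Naturality with respect to admissible inclusions $U\subset U'$ and $V\subset V'$ is inherited from the functoriality of both the short exact sequence above and the chain homotopy equivalence supplied by Corollary~\ref{BMpart_crl}. The only delicate point is tracking local finiteness in the surjectivity of $\beta$, but this is automatic since any subcollection of the simplices appearing in a locally finite chain is itself locally finite.
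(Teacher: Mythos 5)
Your proposal is correct and follows essentially the same route as the paper: the short exact sequence $0\to\Scl_{\{U\cap V\};*}(X;R)\to\Scl_{\{U\};*}(X;R)\oplus\Scl_{\{V\};*}(X;R)\to\Scl_{\{U,V\};*}(X;R)\to0$, the resulting long exact sequence, and Corollary~\ref{BMpart_crl} with $\cA=\{U,V\}$ to identify $\Hcl_{\{U,V\};*}(X;R)$ with $\Hcl_{\{U\cup V\};*}(X;R)$. Your explicit verification of surjectivity of $\beta$ and of the local finiteness of the partitioned chains fills in details the paper leaves implicit.
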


\begin{proof} For $A\!=\!U,V$, let
$\io_A\!:\Scl_{\{A\};*}(X;R)\!\lra\!\Scl_{\{U,V\};*}(X;R)$
denote the inclusion.
The short exact sequence
\begin{equation*}\begin{split}
0\lra
\Scl_{\{U\cap V\};*}(X;R) \xlra{(\io_{U,U\cap V},\io_{V;U\cap V})} 
\Scl_{\{U\};*}(X;R)\!\oplus\!\Scl_{\{V\};*}(X;R)&\\
\xlra{\io_U-\io_V} \Scl_{\{U,V\};k}(X;R)&\lra0
\end{split}\end{equation*}
of chain complexes is exact.
Thus, the claim follows from the Snake Lemma and the first claim of 
Corollary~\ref{BMpart_crl} with $\cA\!=\!\{U,V\}$.
\end{proof}

\begin{crl}[Relative Mayer-Vietoris]\label{relMVSpanier_crl}
Let $U,V\!\subset\!X$ be as in Corollary~\ref{MVSpanier_crl} and $W\!\subset\!X$
be such that $U\!\cup\!V\!\subset\!W$.
Then there is a homomorphism
$$\prt\!:\Hcl_{\{W\};*}\big(X,\{U\!\cup\!V\};R\big)\lra 
\Hcl_{\{W\};*-1}\big(X,\{U\!\cap\!V\};R\big),$$
which is natural with respect to the homomorphisms induced by the admissible
inclusions \hbox{$U\!\subset\!U'$}, \hbox{$V\!\subset\!V'$}, and \hbox{$W\!\subset\!W'$},
so that 
the sequence
\begin{equation*}\begin{split}
\ldots&\stackrel{\prt}{\lra} 
\Hcl_{\{W\};k}(X,\{U\!\cap\!V\};R) \xlra{(j^W_{U,U\cap V*},j^W_{V;U\cap V*})} 
\Hcl_{\{W\};k}(X,\{U\};R)\!\oplus\!\Hcl_{\{W\};k}(X,\{V\};R)\\
&\hspace{.7in}\xlra{j^W_{U\cup V,U*}-j^W_{U\cup V,V}*} \Hcl_{\{W\};k}(X,\{U\!\cup\!V\};R)
\stackrel{\prt}{\lra} \Hcl_{\{W\};k-1}(X,\{U\!\cap\!V\};R) \lra\ldots
\end{split}\end{equation*}
of $R$-modules is exact.
\end{crl}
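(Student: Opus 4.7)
The plan is to mimic the proof of Corollary~\ref{MVSpanier_crl}, producing a short exact sequence of \emph{quotient} chain complexes whose associated Snake Lemma long exact sequence is the relative Mayer-Vietoris sequence, up to an identification of one term via Corollary~\ref{BMpart_crl}.

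First I would show that the sequence of chain complexes
\begin{equation*}
0\lra \Scl_{\{W\};*}\bigl(X,\{U\!\cap\!V\};R\bigr)
\xlra{(j^W_{U,U\cap V},\,j^W_{V,U\cap V})}
\Scl_{\{W\};*}\bigl(X,\{U\};R\bigr)\!\oplus\!\Scl_{\{W\};*}\bigl(X,\{V\};R\bigr)
\xlra{j^W_{U,U}-j^W_{V,V}}
\Scl_{\{W\};*}\bigl(X,\{U,V\};R\bigr) \lra 0
\end{equation*}
is exact. This reduces to the two identities
$$\Scl_{\{U\};*}(X;R)\!\cap\!\Scl_{\{V\};*}(X;R)=\Scl_{\{U\cap V\};*}(X;R)
\quad\hbox{and}\quad
\Scl_{\{U\};*}(X;R)\!+\!\Scl_{\{V\};*}(X;R)=\Scl_{\{U,V\};*}(X;R),$$
which are immediate from the definition of $\Scl_{\cA;*}(X;R)$ (each generator of the latter is a singular simplex into~$U$ or into~$V$). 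Surjectivity of the second map is obvious from $([c],0)\mapsto[c]$; for its kernel, if $\alpha-\beta\in\Scl_{\{U,V\};*}(X;R)$ then write $\alpha-\beta=u+v$ with $u\!\in\!\Scl_{\{U\};*}(X;R)$, $v\!\in\!\Scl_{\{V\};*}(X;R)$, and set $\gamma=\alpha-u=\beta+v\in\Scl_{\{W\};*}(X;R)$; then $([\alpha],[\beta])$ is the image of $[\gamma]$. Injectivity of the first map is equally direct.

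Next I would apply the Snake Lemma to this short exact sequence of chain complexes to obtain a long exact sequence in homology whose $R$-modules are exactly those in the statement, except with $\Hcl_{\{W\};k}(X,\{U,V\};R)$ in place of $\Hcl_{\{W\};k}(X,\{U\!\cup\!V\};R)$. To conclude, I would invoke the second assertion of Corollary~\ref{BMpart_crl} with~$\cA=\{U,V\}$, whose union is $U\!\cup\!V$ and whose covering hypothesis is the one imposed in the statement: this gives a natural isomorphism
$$j^W_{U\cup V,\{U,V\}*}\!:\Hcl_{\{W\};*}(X,\{U,V\};R)\xlra{\approx} \Hcl_{\{W\};*}(X,\{U\!\cup\!V\};R),$$
which I use to replace the relevant term in the long exact sequence. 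The connecting homomorphism $\prt$ of the corollary is then the composition of the Snake Lemma connecting homomorphism with the inverse of this isomorphism.

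Naturality with respect to admissible inclusions $U\!\subset\!U'$, $V\!\subset\!V'$, $W\!\subset\!W'$ is automatic: the short exact sequence above is functorial in the triple $(U,V,W)$, so the Snake Lemma yields a natural long exact sequence; the isomorphism from Corollary~\ref{BMpart_crl} is likewise natural since it is induced by the inclusion of chain complexes. The only mild obstacle is the bookkeeping to verify that the middle maps of the resulting long exact sequence match the signs $(j^W_{U,U\cap V*},j^W_{V,U\cap V*})$ and $j^W_{U\cup V,U*}\!-\!j^W_{U\cup V,V*}$ stated in the corollary; this is a direct computation from the definitions of the maps in the short exact sequence followed by postcomposition with the isomorphism $j^W_{U\cup V,\{U,V\}*}$.
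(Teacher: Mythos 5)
Your proposal is correct and follows essentially the same route as the paper: the same short exact sequence of quotient complexes, the Snake Lemma, and then the identification of $\Hcl_{\{W\};*}(X,\{U,V\};R)$ with $\Hcl_{\{W\};*}(X,\{U\!\cup\!V\};R)$ via the second claim of Corollary~\ref{BMpart_crl} with $\cA\!=\!\{U,V\}$ and $Y\!=\!W$. Your verification of exactness (the intersection and sum identities for $\Scl_{\{U\};*}$ and $\Scl_{\{V\};*}$) is simply a more explicit spelling-out of what the paper leaves implicit.
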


\begin{proof} For $A\!=\!U,V$, let
$$j_A^W\!:\Scl_{\{W\};*}\big(X,\{A\};R\big)\lra
\Scl_{\{W\};*}\big(X,\{U,V\};R\big)$$
denote the homomorphism induced by the inclusion $\io_A$ in
the proof of Corollary~\ref{MVSpanier_crl}.
The short exact sequence
\begin{equation*}\begin{split}
0\lra \Scl_{\{W\};*}(X,\{U\!\cap\!V\};R) 
\xlra{(j^W_{U,U\cap V},j^W_{V;U\cap V})} 
\Scl_{\{W\};*}(X,\{U\};\Z)\!\oplus\!\Scl_{\{W\};*}(X,\{U\};R)&\\
\xlra{j_U^W-j_V^W} \Scl_{\{W\};*}(X,\{U,V\};R)&\lra0
\end{split}\end{equation*}
of chain complexes is then exact.
Thus, the claim follows from the Snake Lemma and 
the second claim of Corollary~\ref{BMpart_crl} with $\cA\!=\!\{U,V\}$ and $Y\!=\!W$.
\end{proof}

\begin{crl}[Excision]\label{SpBMexcise_crl}
Let $X$ be a topological space and $U,W\!\subset\!X$ be subspaces
such that the closure of $X\!-\!U$ in~$X$ is contained in~$\Int\,W$.
Then the homomorphism
\BE{SpBMexcise_e0}\io_*\!:  \Hcl_{\{W\};*}(X,\{U\!\cap\!W\};R)\lra \Hcl_*(X,\{U\};R)\EE
induced by the inclusion $(W,U\!\cap\!W)\!\lra\!(X,U)$ is an isomorphism.
\end{crl}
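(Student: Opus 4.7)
The plan is to use the subdivision pre-chain map~$\hb$ of Lemma~\ref{BMpart_lmm} applied to the cover $\cA\!=\!\{U,W\}$ of~$X$, along with a chain homotopy~$D$ from~$\hb_{\#}$ to the identity on $\Scl_*(X;R)$ supplied by Lemma~\ref{Dhb_lmm} applied to the pre-chain map $\si\!\mapsto\!\hb(\si)\!-\!\id_{\De^k}$, as in the proof of Corollary~\ref{BMpart_crl}.
The hypothesis $\ov{X\!-\!U}\!\subset\!\Int W$ implies $X\!-\!\Int W\!\subset\!\Int U$, so $X\!=\!(\Int U)\!\cup\!(\Int W)$ and $U\!\cup\!W\!=\!X$, verifying the hypothesis of Lemma~\ref{BMpart_lmm}.
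The resulting~$\hb_{\#}$ satisfies $\hb_{\#}(c)\!\in\!\Scl_{\{U,W\};*}(X;R)$ for every $c\!\in\!\Scl_*(X;R)$, while $\hb(\si)\!=\!\id_{\De^k}$ whenever $\si(\De^k)\!\subset\!U$ or $\si(\De^k)\!\subset\!W$.
Combined with the support-preservation property in Lemma~\ref{hrigid_lmm}, this gives that $\hb_{\#}$ restricts to the identity on $\Scl_{\{U\};*}(X;R)$ and on $\Scl_{\{W\};*}(X;R)$, and that $D$ carries $\Scl_{\{U\};*}(X;R)$ into itself.

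For surjectivity of~$\io_*$, let $c\!\in\!\Scl_*(X;R)$ represent a class in $\Hcl_*(X,\{U\};R)$, so $\prt_X c\!\in\!\Scl_{\{U\};*}(X;R)$.
Split $\hb_{\#}(c)\!=\!c_W\!+\!c_U$ by placing each simplex of~$\hb_{\#}(c)$ with image in~$W$ into~$c_W$ and each remaining simplex (necessarily with image in~$U$) into~$c_U$; both summands inherit local finiteness from $\hb_{\#}(c)\!\in\!\Scl_{\{U,W\};*}(X;R)$, so $c_W\!\in\!\Scl_{\{W\};*}(X;R)$ and $c_U\!\in\!\Scl_{\{U\};*}(X;R)$.
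The identity $\hb_{\#}(c)\!-\!c\!=\!\prt_X D(c)\!+\!D(\prt_X c)$ together with $D(\prt_X c)\!\in\!\Scl_{\{U\};*}(X;R)$ yields $c_W\!-\!c\!\in\!\prt_X\Scl_{*+1}(X;R)\!+\!\Scl_{\{U\};*}(X;R)$, while $\prt_X c_W\!=\!\hb_{\#}(\prt_X c)\!-\!\prt_X c_U$ lies in $\Scl_{\{U\};*}(X;R)$ and is supported in~$W$, hence in $\Scl_{\{U\cap W\};*}(X;R)$.
Thus $c_W$ determines a class in $\Hcl_{\{W\};*}(X,\{U\!\cap\!W\};R)$ with $\io_*[c_W]\!=\![c]$.

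For injectivity, suppose $c_W\!\in\!\Scl_{\{W\};*}(X;R)$ is a cycle modulo $\Scl_{\{U\cap W\};*}(X;R)$ with $\io_*[c_W]\!=\!0$, so $c_W\!=\!\prt_X b\!+\!d$ for some $b\!\in\!\Scl_{*+1}(X;R)$ and $d\!\in\!\Scl_{\{U\};*}(X;R)$.
Applying~$\hb_{\#}$ and using $\hb_{\#}(c_W)\!=\!c_W$ and $\hb_{\#}(d)\!=\!d$, split $\hb_{\#}(b)\!=\!b_W\!+\!b_U$ by the same rule as above to obtain $c_W\!-\!\prt_X b_W\!=\!\prt_X b_U\!+\!d\!\in\!\Scl_{\{U\};*}(X;R)\!\cap\!\Scl_{\{W\};*}(X;R)\!=\!\Scl_{\{U\cap W\};*}(X;R)$, so $[c_W]\!=\!0$ in $\Hcl_{\{W\};*}(X,\{U\!\cap\!W\};R)$.
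The main subtlety throughout is confirming that the simplex-by-simplex splittings of $\hb_{\#}(c)$ and $\hb_{\#}(b)$ preserve local finiteness, which is immediate since any open subset of~$X$ meeting only finitely many simplices of a locally finite chain meets only finitely many simplices of any of its subsums; once this is in hand, the argument is the direct Borel-Moore analogue of the classical subdivision proof of excision.
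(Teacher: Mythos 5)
Your argument is correct, and it rests on exactly the same technical engine as the paper's proof: the subdivision pre-chain map of Lemma~\ref{BMpart_lmm} for $\cA\!=\!\{U,W\}$ together with the null-homotopy of Lemma~\ref{Dhb_lmm}, i.e.\ precisely the content of the first claim of Corollary~\ref{BMpart_crl}. Where you differ is in the packaging. The paper factors the map~\e_ref{SpBMexcise_e0} through the intermediate quotient $\Scl_{\cA;*}(X;R)/\Scl_{\{U\};*}(X;R)$: the first arrow in~\e_ref{SpBMexcise_e3} is an isomorphism already at the chain level (this is where the splitting of an $\cA$-small chain into its $W$-part and $U$-part and the identity $\Scl_{\{U\};*}\!\cap\!\Scl_{\{W\};*}\!=\!\Scl_{\{U\cap W\};*}$ are absorbed), and the second arrow is handled by citing Corollary~\ref{BMpart_crl} and the Five Lemma applied to the long exact sequences of the two pairs over $\Scl_{\{U\};*}(X;R)$. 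You instead unwind that homological algebra into an explicit surjectivity/injectivity chase: subdivide, split $\hb_{\#}(c)$ and $\hb_{\#}(b)$ by hand, and use that $\hb_{\#}$ is the identity on $\Scl_{\{U\};*}$ and $\Scl_{\{W\};*}$ and that $D$ preserves $\Scl_{\{U\};*}$. The paper's route is shorter because it reuses an already-proved corollary; yours has the virtue of making visible exactly the points that must be checked in the locally finite setting (subsums of locally finite chains are locally finite, the rigidity of $D$, the intersection identity), all of which you verify correctly. One small remark: your appeal to the support-preservation property of Lemma~\ref{hrigid_lmm} to show $D(\Scl_{\{U\};*})\!\subset\!\Scl_{\{U\};*}$ is fine as stated, since a chain whose support lies in $U$ has every appearing simplex with image in $U$ and hence lies in $\Scl_{\{U\};*}(X;R)$; equivalently one can quote the rigidity condition~\e_ref{hrigid_e2} directly.
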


\begin{proof} Let $\cA\!=\!\{U,W\}$.
The homomorphism~\e_ref{SpBMexcise_e0} is induced by the composition
\BE{SpBMexcise_e3}
\frac{\Scl_{\{W\};*}(X;R)}{\Scl_{\{U\cap W\};*}(X;R)}
\lra\frac{\Scl_{\cA;*}(X;R)}{\Scl_{\{U\};*}(X;R)}
\lra\frac{\Scl_*(X;R)}{\Scl_{\{U\};*}(X;R)}\EE
of homomorphisms of chain complexes.
The first homomorphism above is an isomorphism.
By the assumptions, the interiors of~$U$ and~$W$ cover~$X$.
By the first claim of Corollary~\ref{BMpart_crl} and the Five Lemma, 
the second homomorphism in~\e_ref{SpBMexcise_e3} thus also induces
an isomorphism in homology.
\end{proof}

\subsection{Fundamental class}
\label{FundClass_subs}

\noindent
For a topological space $X$, subsets $A\!\subset\!B\!\subset\!X$ and $W\!\subset\!X$, and 
a class $\mu\!\in\!\Hcl_{\{W\};*}(X,\{W\!-\!B\};R)$, we denote~by
$$\mu|_A\in \Hcl_{\{W\};*}\big(X,\{W\!-\!A\};R\big)$$
the image of $\mu$ under the homomorphism
\BE{Hclrestr_e}\Hcl_{\{W\};*}(X,\{W\!-\!B\};R)\lra \Hcl_{\{W\};*}(X,\{W\!-\!A\};R)\EE
induced by the inclusion $(W,W\!-\!B)\!\lra\!(W,W\!-\!A)$.\\

\noindent
Let $X$ be an $n$-manifold and $B\!\subset\!X$ be a ball (open or closed) around a point $x\!\in\!X$.  
By Corollary~\ref{SpBMexcise_crl} with $W\!=\!B$, \e_ref{SclWcmpt_e}, and the Kunneth formula,
$$\Hcl_k\big(X,\{X\!-\!\{x\}\!\};R\big)\cong 
\Hcl_{\{B\};k}\big(X,\{B\!-\!\{x\}\!\};R\big)
= H_k\big(B,B\!-\!\{x\};R\big)
\cong \begin{cases}R,&\hbox{if}~k\!=\!n;\\
\{0\},&\hbox{otherwise}.
\end{cases}$$
An \sf{$R$-orientation for~$X$ at $x\!\in\!X$} is a choice of generator 
\hbox{$\mu_x\!\in\!\Hcl_n(X,X\!-\!\{x\};R)$}.
An \sf{$R$-orientation for~$X$} is a collection $(\mu_x)_{x\in X}$
of $R$-orientations for~$X$ at~$x$ so that for every $x\!\in\!X$ there exist a neighborhood $U\!\subset\!X$
of~$x$ and $\mu_U\!\in\!\Hcl_k(X,\{X\!-\!U\};R)$ such that
$$\mu_U\big|_y=\mu_y\in\Hcl_n\big(X,\{X\!-\!\{y\}\!\};R\big) \qquad\forall\,y\!\in\!U.$$
An \sf{$R$-oriented manifold} is a pair~$(X,(\mu_x)_{x\in X})$ consisting of a manifold~$X$
and an orientation~$(\mu_x)_{x\in X}$ for~$X$.
By Proposition~\ref{FC_prp}\ref{mnfdFC_it} below with $A\!=\!X$, 
an $R$-oriented $n$-manifold $(X,(\mu_x)_{x\in X})$ carries a \sf{fundamental class}
$$[X]\!\equiv\!\mu_X\!\in\!\Hcl_n(X,\eset;R)\!\equiv\!\Hcl_n(X;R).$$

\begin{prp}[Fundamental Class]\label{FC_prp}
Let $X$ be an $n$-manifold and $A\!\subset\!X$ be a closed subset.
\begin{enumerate}[label=(\arabic*),leftmargin=*]

\item\label{mnfdvan_it} For every $k\!>\!n$, $\Hcl_k(X,\{X\!-\!A\};R)\!=\!0$.

\item\label{mnfinj_it} An element $\mu_A\!\in\!\Hcl_n(X,\{X\!-\!A\};R)$ is zero
if and only if
$$\mu_A\big|_x=0\in\Hcl_n\big(X,\{X\!-\!\{x\}\!\};R\big)  \qquad\forall\,x\!\in\!A.$$

\item\label{mnfdFC_it} If $(\mu_x)_{x\in X}$ is an $R$-orientation on~$X$,
there exists a unique $\mu_A\!\in\!\Hcl_n(X,\{X\!-\!A\};R)$ such~that
\BE{mnfdFC_e}\mu_A\big|_x=\mu_x\in\Hcl_n\big(X,\{X\!-\!\{x\}\!\};R\big) \qquad\forall\,x\!\in\!A.\EE

\end{enumerate}
\end{prp}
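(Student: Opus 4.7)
The plan is to prove (1), (2), and (3) simultaneously by induction on the geometric complexity of the closed subset $A\!\subset\!X$, adapting the classical three-stage Milnor--Stasheff scheme (Appendix~A of \cite{MiSt}) to the Borel--Moore setting using Excision (Corollary~\ref{SpBMexcise_crl}) and Relative Mayer--Vietoris (Corollary~\ref{relMVSpanier_crl}). At each stage, (1) and (2) are upgraded first, and (3) follows by lifting local classes through the resulting exact sequences.

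First, I would handle $A$ compact and contained in an open Euclidean chart $V\!\cong\!\R^n$ with $\ov{V}$ compact in~$X$. Excision (Corollary~\ref{SpBMexcise_crl}) with $U\!=\!X\!-\!A$ and $W\!=\!V$, combined with~\e_ref{SclWcmpt_e}, gives
$$\Hcl_k\big(X,\{X\!-\!A\};R\big)\approx \Hcl_{\{V\};k}\big(X,\{V\!-\!A\};R\big)=H_k\big(V,V\!-\!A;R\big),$$
ordinary relative singular homology, where all three statements are classical (established by an inner Mayer--Vietoris induction within~$V$ bottoming out at the compact-convex case, handled by the long exact sequence of the pair together with the deformation retraction of $\R^n\!-\!A$ onto a sphere). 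Next I would handle general compact $A\!\subset\!X$ by covering it with finitely many compact subsets each sitting in such a chart and inducting on the number of pieces via Relative Mayer--Vietoris (Corollary~\ref{relMVSpanier_crl}) applied with $U\!=\!X\!-\!A_1$, $V\!=\!X\!-\!A_2$, $W\!=\!X$. At each step the Five Lemma promotes (1) and (2) from $A_1$, $A_2$, and $A_1\!\cap\!A_2$ to $A_1\!\cup\!A_2$; for (3) the diagonal image of $(\mu_{A_1},\mu_{A_2})$ in $\Hcl_n(X,\{X\!-\!(A_1\!\cap\!A_2)\};R)$ vanishes by uniqueness on the compact overlap, so it lifts to the desired $\mu_A$.

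The third stage, extending to general closed (possibly non-compact) $A$, is the main obstacle and forces genuine use of the local-finiteness structure of $\Scl_*$ built in Sections~\ref{BMprp_subs}--\ref{BMsubcom_subs}. Using second countability and paracompactness, I would pick an exhaustion $K_1\!\subset\!\Int K_2\!\subset\!K_2\!\subset\!\cdots$ of~$X$ by compact sets and set $A_n\!=\!A\cap(K_{n+1}\!-\!\Int K_n)$, yielding a locally finite disjoint decomposition $A\!=\!\bigsqcup_n A_n$ into compact pieces. The compact case provides each $\mu_{A_n}\!\in\!\Hcl_n(X,\{X\!-\!A_n\};R)$, represented by a chain $c_n\!\in\!S_n(X;R)$ supported in a small compact neighborhood $W_n$ of $A_n$ with the $\{W_n\}$ also locally finite. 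Local finiteness makes $c\!=\!\sum_n c_n$ a genuine element of $\Scl_n(X;R)$, and a pairwise-overlap adjustment on each pair $A_n,A_{n+1}$ (using uniqueness from the compact stage to match representatives) turns~$c$ into a cycle in $\Scl_n(X,\{X\!-\!A\};R)$ whose restriction at each $x\!\in\!A$ is~$\mu_x$. Statements~(1) and~(2) in this generality then follow from the product decomposition~\e_ref{SclWdisj_e} applied to a disjoint-neighborhood refinement of $\{W_n\}$ (splitting into even- and odd-indexed sub-collections to obtain pairwise disjointness), combined with the corresponding statements for the compact pieces. The chief difficulty, requiring the most care, is verifying that these pairwise adjustments can be made globally compatible so that the resulting infinite sum is a relative cycle on the nose rather than merely up to a controllable error.
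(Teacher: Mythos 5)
Your first two stages (compact $A$ in a chart via excision and \e_ref{SclWcmpt_e}, then general compact $A$ by a finite relative Mayer--Vietoris induction) match the paper's Case~1 and the finite-union mechanism, so no issue there. The genuine gap is in your third stage. First, the decomposition $A_n\!=\!A\cap(K_{n+1}\!-\!\Int K_n)$ is \emph{not} disjoint: consecutive pieces share $A\cap(K_{n+1}\!-\!\Int K_{n+1})$, so ``$A=\bigsqcup_n A_n$'' is false as stated, and the entire construction of $\mu_A$ then rests on the chain-level program of choosing representatives $c_n$, summing them, and making ``pairwise-overlap adjustments'' compatible --- which you yourself flag as unverified. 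That compatibility is exactly the hard point: matching $\mu_{A_n}$ and $\mu_{A_{n+1}}$ on the overlap only gives bounding chains for the differences up to homology relative to $X\!-\!(A_n\!\cap\!A_{n+1})$, and you must show the infinitely many correction chains can be chosen locally finitely, without disturbing the restrictions $\mu_x$ at points of the overlaps, and so that the total sum is a relative cycle on the nose. Nothing in your sketch supplies this. Second, your claim that (1) and (2) for general closed $A$ ``follow from the product decomposition~\e_ref{SclWdisj_e} applied to a disjoint-neighborhood refinement'' is also incomplete: splitting into even- and odd-indexed sub-collections only gives the statements for $A_{\mathrm{even}}$ and $A_{\mathrm{odd}}$ separately; passing to their union $A$ requires a further relative Mayer--Vietoris/Five Lemma step, which you omit.

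The paper's proof shows how to avoid the chain-level gluing entirely, and you should restructure your last stage accordingly. Choose the compact pieces so that $A_i\cap A_j=\eset$ whenever $|i-j|>1$ (e.g.\ via a proper function as in Remark~\ref{PD_rmk}); then $A_{\mathrm{odd}}$, $A_{\mathrm{even}}$, and $A_{\mathrm{odd}}\cap A_{\mathrm{even}}$ are each locally finite \emph{disjoint} unions of compacta. For such sets, excision plus \e_ref{SclWdisj_e} identifies $\Hcl_*(X,\{X\!-\!\cdot\};R)$ with an infinite product of the groups for the compact pieces, so (1), (2) are immediate from the compact case and the existence of the fundamental class is formal (take the tuple $(\mu_B)_B$; the projections are the restriction maps). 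Then a single application of Corollary~\ref{relMVSpanier_crl} with the Five Lemma gives (1), (2) for $A=A_{\mathrm{odd}}\cup A_{\mathrm{even}}$, and for (3) the vanishing of $\Hcl_{n+1}(X,\{X\!-\!A_{\mathrm{odd}}\cap A_{\mathrm{even}}\};R)$ from (1) makes the Mayer--Vietoris sequence start with $0\lra\Hcl_n(X,\{X\!-\!A\};R)$, so the pair $(\mu_{A_{\mathrm{odd}}},\mu_{A_{\mathrm{even}}})$, which agrees on the overlap by uniqueness, lifts to the desired $\mu_A$. This replaces your unproven global-compatibility claim with exact-sequence arguments already available from Sections~\ref{BMsubcom_subs}.
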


\begin{proof}[{\bf{\emph{Proof of Proposition~\ref{FC_prp}\ref{mnfdvan_it},\ref{mnfinj_it}}}}]
The proof is divided into four steps.\\

\noindent
{\it Case 1.} Suppose $A$ is compact. 
Let $U\!\subset\!X$ be a precompact open neighborhood of~$A$. 
By Corollary~\ref{SpBMexcise_crl} with $W\!=\!U$ and~\e_ref{SclWcmpt_e},
\BE{FCcmpt_e1}\Hcl_*\big(X,\{X\!-\!A\!\};R\big)\cong 
\Hcl_{\{U\};*}\big(X,\{U\!-\!A\!\};R\big)
= H_*\big(U,U\!-\!A;R\big).\EE
The two claims in this case thus follow from \cite[Lemma~A.7]{MiSt}.\\

\noindent
{\it Case 2.} Suppose $A$ is the union of a locally finite collection~$\cA$ 
of disjoint compact subsets of~$X$.
Let $\{U_B\!:B\!\in\!\cA\}$ be a locally finite collection of disjoint precompact open subsets
of~$X$ so that $B\!\subset\!U_B$ for every $B\!\in\!\cA$.
Let $U\!\subset\!X$ be the union of the subsets~$U_B$.
By Corollary~\ref{SpBMexcise_crl} with $W\!=\!U$ and~\e_ref{SclWdisj_e},
\BE{FCcount_e1}\begin{split}
\Hcl_*\big(X,\{X\!-\!A\!\};R\big)
&\cong \Hcl_{\{U\};*}\big(X,\{U\!-\!A\!\};R\big)\\
&\cong \prod_{B\in\cA}\!\!\Hcl_{\{U_B\};*}\big(X,\{U_B\!-\!B\};R\big)
\cong \prod_{B\in\cA}\!\!\Hcl_*\big(X,\{X\!-\!B\};R\big).
\end{split}\EE
The composition of the above isomorphism with the projection to the $B$-th component
of the product is the restriction homomorphism
\BE{FCcount_e3}\Hcl_*\big(X,\{X\!-\!A\!\};R\big)\lra\Hcl_*\big(X,\{X\!-\!B\};R\big).\EE
The two claims in this case thus follow from Case~1.\\

\noindent
{\it Case 3.} Suppose $A_1,A_2\!\subset\!X$ are closed, $A\!=\!A_1\!\cup\!A_2$,
and the two claims hold for the subsets \hbox{$A_1,A_2,A_1\!\cap\!A_2$} of~$X$.
By Corollary~\ref{relMVSpanier_crl} with $W\!=\!X$, $U\!=\!X\!-\!A_1$, and $V\!=\!X\!-\!A_2$,
there is an exact sequence
\BE{FCcup_e1}\begin{split}
\ldots\lra\Hcl_{k+1}\big(X,\{X\!-\!A_1\!\cap\!A_2\};R\big)
&\lra \Hcl_k\big(X,\{X\!-\!A\};R\big) \\
&\lra \Hcl_k(X,\{X\!-\!A_1\};R)\!\oplus\!\Hcl_k\big(X,\{X\!-\!A_2\};R\big)\lra\ldots
\end{split}\EE
Thus, the two claims also hold for~$A$.\\

\noindent
{\it Case 4.} $A$ is arbitrary.
Let $\{A_i\}_{i\in\Z}$ be a locally finite collection of compact subsets of~$X$ such~that 
$$A=\bigcup_{i\in\Z}A_i \qquad\hbox{and}\qquad A_i\!\cap\!A_j= \eset~~\hbox{if}~|i\!-\!j|\!>\!1.$$ 
By Case~2, the two claims hold for the~subsets
$$A_{odd}\equiv\bigcup_{i\in\Z}A_{2i-1}, \qquad
A_{even}\equiv\bigcup_{i\in\Z}A_{2i}, \quad\hbox{and}\quad
A_{odd}\!\cap\!A_{even}=\bigcup_{i\in\Z}A_i\!\cap\!A_{i+1}$$
of~$X$.
By Case~3, the two claims hold for $A\!\equiv\!A_{odd}\!\cup\!A_{even}$ as well.
\end{proof}

\begin{proof}[{\bf{\emph{Proof of Proposition~\ref{FC_prp}\ref{mnfdFC_it}}}}]
The uniqueness of~$\mu_A$ follows immediately from the second claim of the proposition.
The uniqueness property implies~that 
\begin{equation}\label{mnfdFC_e3}
\mu_{A'}=\mu_A|_{A'}\in \Hcl_n(X,\{X\!-\!A'\};R)
\end{equation}
whenever $A'\!\subset\!A$ and an element $\mu_A\!\in\!\Hcl_n(X,\{X\!-\!A\};R)$
satisfying~\e_ref{mnfdFC_e} exists.
The existence proof is again divided into four steps.\\

\noindent
{\it Case 1.} Suppose $A$ is compact. 
Let $U\!\subset\!X$ be a precompact open neighborhood of~$A$. 
The claim in this case follows from \e_ref{FCcmpt_e1} with $*\!=\!n$ and 
\cite[Theorem~A.8]{MiSt}.\\

\noindent
{\it Case 2.} Suppose $A$ is the union of a locally finite collection~$\cA$ 
of disjoint compact subsets of~$X$.
Let $\{U_B\!:B\!\in\!\cA\}$ and $U\!\subset\!X$ be as in Case~2 
in the proof of Proposition~\ref{FC_prp}\ref{mnfdvan_it},\ref{mnfinj_it}.
Since the composition of the isomorphism~\e_ref{FCcount_e1} 
with the projection to the $B$-th component
of the product is the restriction homomorphism~\e_ref{FCcount_e3},
the preimage~$\mu_A$ of the element~$(\mu_B)_{B\in\cA}$ under this isomorphism
satisfies~\e_ref{mnfdFC_e}.\\

\noindent
{\it Case 3.} Suppose $A_1,A_2\!\subset\!X$ are closed, $A\!=\!A_1\!\cup\!A_2$,
and the claim holds for the subsets \hbox{$A_1,A_2,A_1\!\cap\!A_2$} of~$X$.
By the first claim of the proposition,
the long exact sequence~\e_ref{FCcup_e1} becomes
\begin{equation*}\begin{split}
0\lra \Hcl_n\big(X,\{X\!-\!A\};R\big) 
&\lra \Hcl_n(X,\{X\!-\!A_1\};R)\!\oplus\!\Hcl_n\big(X,\{X\!-\!A_2\};R\big)\\
&\lra \Hcl_n(X,\{X\!-\!A_1\!\cap\!A_2\};R)\lra\ldots
\end{split}\end{equation*}
By~\e_ref{mnfdFC_e3}, $\mu_{A_1}|_{A_1\cap A_2}\!=\!\mu_{A_1\cap A_2}\!=\!\mu_{A_2}|_{A_1\cap A_2}$.
Thus, there exists
$$\mu_A\in\Hcl_n\big(X,\{X\!-\!A\};R\big) \qquad\hbox{s.t.}\quad
\mu_A|_{A_1}\!=\!\mu_{A_1},~~\mu_A|_{A_2}\!=\!\mu_{A_2}.$$
Since $\mu_A|_x\!=\!\mu_{A_i}|_x$ for all $x\!\in\!A_i$,
$\mu_A$ satisfies~\e_ref{mnfdFC_e}.\\

\noindent
{\it Case 4.} $A$ is arbitrary.
Let $\{A_i\}_{i\in\Z}$ be as in Case~4
in the proof of Proposition~\ref{FC_prp}\ref{mnfdvan_it},\ref{mnfinj_it}.
By Case~2, the claim holds for the~subsets
$$A_{odd}\equiv\bigcup_{i\in\Z}A_{2i-1}  \qquad\hbox{and}\qquad
A_{even}\equiv\bigcup_{i\in\Z}A_{2i}\,.$$
By Case~3, the claims holds for $A\!\equiv\!A_{odd}\!\cup\!A_{even}$ as well.
\end{proof}

\subsection{Poincar\'e Duality}
\label{PD_subs}

\noindent
For a collection $\cA$ of subsets of a topological space~$X$
and a subset $W\!\subset\!X$ containing every \hbox{$U\!\in\!\cA$},
the homomorphism~\e_ref{Scapdfn_e} induces a homomorphism
$$\cap\!: S^q(W;R)\!\otimes_R\!\Scl_{\{W\};p+q}(X,\cA;R)\lra \Scl_{\{W\};p}(X,\cA;R).$$
The latter in turn induces a natural homomorphism
\BE{caprestr_e}\cap\!: H^q(W;R)\!\otimes_R\!\Hcl_{\{W\};p+q}(X,\cA;R)\lra \Hcl_{\{W\};p}(X,\cA;R).\EE
For $U,W'\!\subset\!W$, let
$$\big\{\io_{W,W'}\big\}_{\!*}\!:
\Hcl_{\{W'\};p}\big(X,\{U\!\cap\!W'\};R\big)\lra\Hcl_{\{W\};p}\big(X,\{U\};R\big)$$
be the homomorphism induced by the inclusion $(W',U\!\cap\!W')\!\lra\!(W,U)$.
By the naturality of~\e_ref{caprestr_e},
\BE{CapNat_e}\begin{split} 
&\big\{\io_{W,W'}\big\}_{\!*}\big(\!(\al|_{W'})\!\cap\!\mu\big)=
\al\!\cap\!\big(\{\io_{W,W'}\}_*(\mu)\!\big)\in 
\Hcl_{\{W\};p}\big(X,\{U\};R\big)\\
&\hspace{2in}
\forall~\al\!\in\!H^q(W;R),\,
\mu\!\in\!\Hcl_{\{W'\};p+q}\big(X,\{U\!\cap\!W'\};R\big).
\end{split}\EE

\vspace{.2in}

\noindent
For subsets $U,W$ of a topological space~$X$ such that 
the closure of $X\!-\!U$ in~$X$ is contained in~$\Int\,W$ and 
\hbox{$\mu\!\in\!\Hcl_*(X,\{U\};R)$}, we denote~by
$$\mu|_W\in \Hcl_{\{W\};*}\big(X,\{U\!\cap\!W\};R\big)$$
the preimage of $\mu$ under the excision isomorphism~\e_ref{SpBMexcise_e0}.
If $W'\!\subset\!W$ is another subset such that 
the closure of $X\!-\!U$ in~$X$ is contained in~$\Int\,W'$, then
\BE{SpBMexcise_e0a}\begin{split}
\big\{\io_{X,W'}\big\}_*\!=\!
\big\{\io_{X,W}\big\}_*\!\circ\!\big\{\io_{W,W'}\big\}_*\!:
\Hcl_{\{W'\};*}\big(X,\{U\!\cap\!W'\};R\big)
&\lra\Hcl_{\{W\};*}\big(X,\{U\!\cap\!W\};R\big)\\
&\lra\Hcl_*\big(X,\{U\};R\big)
\end{split}\EE
and thus
\BE{SpBMexcise_e0b} \big\{\io_{W,W'}\big\}_*\big(\mu|_{W'}\big)=\mu|_W\in 
\Hcl_{\{W\};*}\big(X,\{U\!\cap\!W\};R\big)
\quad\forall\,\mu\!\in\!\Hcl_*\big(X,\{U\};R\big).\EE

\vspace{.2in}

\noindent
Let $(X,(\mu_x)_{x\in X})$ be an $R$-oriented $n$-manifold, $A\!\subset\!X$ a closed subset,
and 
$$\mu_A\in\Hcl_n\big(X,\{X\!-\!A\};R\big)$$ 
the fundamental class provided by 
Proposition~\ref{FC_prp}\ref{mnfdFC_it}.
Suppose $U_A\!\subset\!X$ is an open neighborhood of~$A$ that deformation retracts onto~$A$.
Thus, the restriction homomorphism
$$H^*(U_A;R)\lra\ H^*(A;R), \qquad \al\mapsto\al|_{A}\,,$$
is an isomorphism.
It follows that the homomorphism
\BE{PDdfn_e}\begin{split}
&\hspace{.2in}\PD_{A;U_A}\!: H^k(A;R)\lra\Hcl_{n-k}\big(X,\{X\!-\!A\};R\big),\\
&\PD_{A;U_A}\big(\al|_A\big)=  
\big\{\io_{X,U_A}\big\}_*\big(\al\!\cap\!(\mu_A|_{U_A})\!\big)
~~\forall\,\al\!\in\!H^k(U_A;R),
\end{split}\EE
is well-defined.\\

\noindent
If $B\!\supset\!A$ is another closed subset of~$X$ and 
$U_B\!\subset\!X$ is an open neighborhood of~$B$ that deformation retracts onto~$B$ 
and contains~$U_A$,
\begin{gather}
\mu_A=\mu_B|_A\in \Hcl_n\big(X,\{X\!-\!A\};R\big),\notag \\
\label{FCrestrcond_e}
\big\{\io_{U_B,U_A}\big\}_*\big(\mu_A|_{U_A}\big)
=\mu_A|_{U_B}
=\big(\mu_B|_{U_B}\big)\!\big|_A\in\Hcl_{\{U_B\};n}\big(X,\{U_B\!-\!A\};R\big)
\end{gather}
by the uniqueness part of Proposition~\ref{FC_prp}\ref{mnfdFC_it},
\e_ref{SpBMexcise_e0b}, and the commutativity of the diagram
\BE{restrCD_e}\begin{split}
\xymatrix{\Hcl_{\{U_B\};*}\big(X,\{U_B\!-\!B\};R\big)\ar[rr]^{\cdot|_A}
\ar[d]_{\cong}&&
\Hcl_{\{U_B\};*}\big(X,\{U_B\!-\!A\};R\big) \ar[d]^{\cong}\\
\Hcl_*\big(X,\{X\!-\!B\};R\big)\ar[rr]^{\cdot|_A}&&
\Hcl_*\big(X,\{X\!-\!A\};R\big)\,.}
\end{split}\EE
Along with~\e_ref{CapNat_e}, \e_ref{FCrestrcond_e} gives 
\begin{equation*}\begin{split}
\big\{\io_{U_B,U_A}\big\}_*\big(\al|_{U_A}\!\cap\!(\mu_A|_{U_A})\!\big)
&=\al\!\cap\!(\mu_A|_{U_B})=\al\!\cap\!\big(\!(\mu_B|_{U_B})|_A\big)\\
&=\big(\al\!\cap\!(\mu_B|_{U_B})\!\big)\!\big|_A\in 
\Hcl_{\{U_B\};n-k}\big(X,\{U_B\!-\!A\};R\big)
\quad\forall\,\al\!\in\!H^k(U_B;R).
\end{split}\end{equation*}
Combining this with~\e_ref{SpBMexcise_e0a} and the commutativity of~\e_ref{restrCD_e}, 
we conclude that the diagram
\BE{PDrestrCD_e}\begin{split}
\xymatrix{H^k(B;R) \ar[d]_{\PD_{B;U_B}}\ar[rr]^{\cdot|_A}&&  
H^k(A;R)\ar[d]^{\PD_{A;U_A}}\\
\Hcl_{n-k}\big(X,\{X\!-\!B\};R\big)\ar[rr]^{\cdot|_A}&& 
\Hcl_{n-k}\big(X,\{X\!-\!A\};R\big)}
\end{split}\EE
commutes.\\

\noindent
By the commutativity of~\e_ref{PDrestrCD_e} with $A\!=\!B$, 
the homomorphism~\e_ref{PDdfn_e} does not depend on
the choice of~$U_A$ if $A$ is a \sf{neighborhood retract},
i.e.~every open neighborhood~$W\!\subset\!X$ of~$A$ contains
an open neighborhood~$U_A$ of~$A$ that deformation retracts onto~$A$.
This is in particular the case if $A\!\subset\!X$ is a closed submanifold with corners.
If $A\!\subset\!X$ is a closed neighborhood retract, we denote~by
\BE{PDdfn_e2}\PD_A\!:H^k(A;R)\lra\Hcl_{n-k}\big(X,\{X\!-\!A\};R\big)\EE
the homomorphism~\e_ref{PDdfn_e} for any admissible neighborhood $U_A$ of~$A$.
For $A\!=\!X$, this homomorphism is given~by
$$\PD_X\!:H^k(X;R)\lra\Hcl_{n-k}\big(X;R\big), \qquad
\PD_X(\al)=\al\!\cap\![X].$$
If $A\!\subset\!B\!\subset\!X$ are closed neighborhood retracts, 
the commutativity of~\e_ref{PDrestrCD_e} implies that 
 the diagram
\BE{PDrestrCD_e2}\begin{split}
\xymatrix{H^k(B;R) \ar[d]_{\PD_B}\ar[rr]^{\cdot|_A}&&  
H^k(A;R)\ar[d]^{\PD_A}\\
\Hcl_{n-k}\big(X,\{X\!-\!B\};R\big)\ar[rr]^{\cdot|_A}&& 
\Hcl_{n-k}\big(X,\{X\!-\!A\};R\big)}
\end{split}\EE
commutes as well.

\begin{prp}[Poincar\'e Duality]\label{PD_prp}
Let $(X,(\mu_x)_{x\in X})$ be an $R$-oriented $n$-manifold. 
If $A\!\subset\!X$ is a closed $n$-submanifold with corners,
the homomorphism~\e_ref{PDdfn_e2} is an isomorphism.
\end{prp}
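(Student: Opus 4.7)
The plan is to prove Proposition~\ref{PD_prp} by a four-case induction on the subset $A$, mirroring the proof of Proposition~\ref{FC_prp} and following the Mayer--Vietoris bootstrapping strategy of \cite[Appendix~A]{MiSt}.  The main ingredients are the Mayer--Vietoris sequence in singular cohomology for a cover $\{B_1, B_2\}$ of $A$, the relative Mayer--Vietoris sequence for Borel--Moore homology from Corollary~\ref{relMVSpanier_crl} applied with $W=X$, $U=X\!-\!B_1$, $V=X\!-\!B_2$ (so that $U\!\cap\!V = X\!-\!(B_1\!\cup\!B_2)$ and $U\!\cup\!V = X\!-\!(B_1\!\cap\!B_2)$), and the naturality of $\PD$ recorded in \eqref{PDrestrCD_e2}.

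\emph{Case 1 (compact $A$).}  Since $A$ is a closed $n$-submanifold with corners, it is a neighborhood retract.  Corollary~\ref{SpBMexcise_crl} together with \eqref{SclWcmpt_e} identifies $\Hcl_{n-k}(X,\{X\!-\!A\};R)$ with the standard relative singular homology $H_{n-k}(U, U\!-\!A; R)$ for any precompact open neighborhood $U$ of $A$, while $H^k(A;R) \cong H^k(U_A;R)$ for a deformation retract $U_A\subset U$ of $A$.  Under these identifications $\PD_A$ becomes cap product with the image of the local fundamental class in $H_n(U, U\!-\!A; R)$.  This is the classical relative Poincar\'e duality for compact neighborhood retracts in an oriented manifold, established by the argument of \cite[Appendix~A]{MiSt}.

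\emph{Cases 2--4 (gluing).}  For $A=\bigsqcup_i A_i$ a locally finite disjoint union of compact Case~1 pieces, \eqref{SclWdisj_e} combined with excision gives $\Hcl_{n-k}(X,\{X\!-\!A\};R) \cong \prod_i \Hcl_{n-k}(X,\{X\!-\!A_i\};R)$ and $H^k(A;R) \cong \prod_i H^k(A_i;R)$, so $\PD_A = \prod_i \PD_{A_i}$ is an isomorphism by Case~1.  For $A = B_1 \cup B_2$ with $B_1,B_2,B_1\!\cap\!B_2$ all satisfying the conclusion, the two Mayer--Vietoris sequences above combine with \eqref{PDrestrCD_e2} and the Five Lemma to yield the conclusion for $A$.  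The general case is then handled by partitioning $A$ into a locally finite union of compact $n$-submanifolds with corners and writing $A = A_\mathrm{odd}\cup A_\mathrm{even}$ exactly as in Case~4 of the proof of Proposition~\ref{FC_prp}, reducing to the previous cases.

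\emph{Main obstacle.}  The delicate step is the compatibility of $\PD$ with the connecting homomorphisms in the two Mayer--Vietoris sequences appearing in Case~3.  The squares not involving the connecting map follow directly from \eqref{PDrestrCD_e2}, but the connecting square requires a chain-level computation reconciling the cohomology coboundary (built via a partition-of-unity cocycle on $B_1\cup B_2$) with the algebraic connecting map of Corollary~\ref{relMVSpanier_crl}, using the Leibniz identity \eqref{capbd_e}, the naturality~\eqref{CapNat_e}, and the chain-homotopy equivalences provided by Corollary~\ref{BMpart_crl}.  This verification, essentially formal but bookkeeping-heavy, is where the bulk of the work lies.
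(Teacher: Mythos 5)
Your proposal is correct and follows essentially the same route as the paper: the identical four-case induction (compact pieces, locally finite disjoint unions via \e_ref{SclWdisj_e}, a union of two pieces handled by the cohomology Mayer--Vietoris sequence together with Corollary~\ref{relMVSpanier_crl} and the Five Lemma using the naturality~\e_ref{PDrestrCD_e2}, then the odd/even decomposition), with the connecting-homomorphism square controlled, as in the paper, by~\e_ref{capbd_e} (the paper simply records that it commutes up to the sign $(-1)^{n-k+1}$, which suffices for the Five Lemma). The only cosmetic divergence is in Case~1, where the paper uses the homotopy equivalence $(U,U\!-\!A)\simeq(A,\prt A)$ to reduce to Poincar\'e--Lefschetz duality for the compact manifold with boundary $(A,\prt A)$, whereas you invoke the classical duality for compact neighborhood retracts in the style of \cite[Appendix~A]{MiSt}.
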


\begin{proof} The proof is again divided into four steps.\\

\noindent
{\it Case 1.} Suppose $A$ is compact.
Let $U\!\subset\!X$ be a precompact open neighborhood of~$A$ that deformation retracts onto~$A$. 
Combining the isomorphism~\e_ref{FCcmpt_e1} with 
the homotopy invariance of the standard singular homology
for $(U,U\!-\!A)\!\cong\!(A,\prt A)$, we obtain
\BE{PDcmpt_e1}\Hcl_*\big(X,\{X\!-\!A\!\};R\big)\cong  H_*\big(A,\prt A;R\big).\EE
Since~$\mu_A$ corresponds to the standard fundamental class 
\hbox{$[A,\prt A]\!\in\!H_n(A,\prt A;R)$} under this isomorphism,
the diagram 
$$\xymatrix{ H^k(A;R) \ar[d]_{\PD_{A;U}}\ar[rr]^{\id}&& H^k(A;R)\ar[d]^{\PD_{(A,\prt A)}}
&\al\ar[d]^{\PD_{(A,\prt A)}}  \\
\Hcl_*\big(X,\{X\!-\!A\};R\big) \ar[rr]_{\cong}^{\e_ref{PDcmpt_e1}}&& H_*(A,\prt A;R)
&\al\!\cap\![A,\prt A]}$$
commutes.
Since $(A,\prt A)$ is a compact topological manifold with boundary,
$\PD_{(A,\prt A)}$ is an isomorphism by the compact case of \cite[Exercise~A.1]{MiSt}
and the $(M,A,B)\!=\!(A,\eset,\prt A)$ case of \cite[Theorem~3.43]{Hatcher}.
Thus, $\PD_{A;U}$ is an isomorphism as~well.\\

\noindent
{\it Case 2.} Suppose $A$ is the union of a locally finite collection~$\cA$ 
of disjoint compact subsets of~$X$ so that each $B\!\in\!\cA$ is an $n$-submanifold
with corners.
Let $\{U_B\!:B\!\in\!\cA\}$ and $U\!\subset\!X$ be as in Case~2 
in the proof of Proposition~\ref{FC_prp}\ref{mnfdvan_it},\ref{mnfinj_it}
so that each~$U_B$ deformation retracts onto~$B$.
In particular, the restriction homomorphisms
$$H^*(A;R)\lra H^*(B;R) \quad\hbox{and}\quad H^*(U;R)\lra H^*(U_B;R)$$
induce isomorphisms
\BE{PDcount_e1}H^*(A;R)\cong \prod_{B\in\cA}\!\!H^*(B;R)
\quad\hbox{and}\quad
H^*(U;R)\cong \prod_{B\in\cA}\!\!H^*(U_B;R),\EE
respectively.
Since~$\mu_A$ corresponds to~$(\mu_B)_{B\in\cA}$ under the isomorphism~\e_ref{FCcount_e1},
the diagram 
$$\xymatrix{ H^*(A;R) \ar[d]_{\PD_{A;U}}\ar[rr]_{\cong}^{\e_ref{PDcount_e1}}&&
\prod\limits_{B\in\cA}\!\!\!H^*(B;R)\ar[d]^{\prod\limits_{B\in\cA}\!\!\!\PD_{B;U_B}}\\
\Hcl_*\big(X,\{X\!-\!A\!\};R\big)\ar[rr]_{\cong}^{\e_ref{FCcount_e1}}&&
\prod\limits_{B\in\cA}\!\!\!\Hcl_*\big(X,\{X\!-\!B\};R\big)}$$
commutes.
Thus, $\PD_{A;U}$ is an isomorphism by Case~1.\\

\noindent
{\it Case 3.} Suppose $A_1,A_2,A_1\!\cap\!A_2\!\subset\!X$ are closed $n$-submanifolds 
with corners which satisfy the claim and \hbox{$A\!=\!A_1\!\cup\!A_2$}. 
For a subspace $B\!\subset\!X$, let 
$$\sH_*(B)=\Hcl_*\big(X,\{X\!-B\};R\big) \qquad\hbox{and}\qquad
\sH^*(B)=H^*(B;R).$$
Let $A_{12}\!=\!A_1\!\cap\!A_2$.
For $i\!=\!1,2$, define
\begin{alignat*}{2}
\io_i\!:\sH_*(A)&\lra\sH_*(A_i), &\qquad j_i\!:\sH_*(A_i)&\lra\sH_*(A_{12}),\\
\io_i^*\!:\sH^*(A)&\lra\sH^*(A_i), &\qquad j_i^*\!:\sH^*(A_i)&\lra\sH^*(A_{12})
\end{alignat*}
to be the homology homomorphisms as in~\e_ref{Hclrestr_e} and
the usual cohomology restriction homomorphisms.
By Mayer-Vietoris for the standard singular cohomology and
Corollary~\ref{relMVSpanier_crl} with $W\!=\!X$, $U\!=\!X\!-\!A_1$, and $V\!=\!X\!-\!A_2$, 
the rows in the diagram
$$\xymatrix{\ldots\ar[r]& \sH^{k-1}(A_{12})\ar[d]_{\PD_{A_{12}}}
\ar[r]^>>>>>>>>{\de}&
\sH^k(A)\ar[d]_{\PD_A}\ar[r]^>>>>>>>{(\io_1^*,\io_2^*)}&
\sH^k(A_1)\!\oplus\!\sH^k(A_2)\ar[d]|{\PD_{A_1}\!\oplus\PD_{A_2}}
\ar[r]^<<<<<<<{j_1^*-j_2^*}&
\sH^k(A_{12})\ar[d]_{\PD_{A_{12}}}\ar[r]^<<<<<{\de}&\ldots\\
\ldots\ar[r]&
\sH_{n-k+1}(A_{12})\ar[r]^>>>>>>{\prt}& \sH_{n-k}(A)\ar[r]^>>>>>>{(\io_1,\io_2)}&
\sH_{n-k}(A_1)\!\oplus\!\sH_{n-k}(A_2)\ar[r]^<<<<{j_1-j_2}&
\sH_{n-k}(A_{12})\ar[r]^<<<<{\prt}&\ldots}$$
are exact. 
The second and third squares above commute by the commutativity of~\e_ref{PDrestrCD_e2}.
By~\e_ref{FCrestrcond_e} and~\e_ref{capbd_e},
the first square commutes up to the multiplication by~$(-1)^{n-k+1}$.
Since the homomorphisms~$\PD_{A_{12}},\PD_{A_1},\PD_{A_2}$ are isomorphisms, 
the Five Lemma implies that so are the homomorphisms~$\PD_A$.\\

\noindent
{\it Case 4.} $A$ is arbitrary.
Let $\{A_i\}_{i\in\Z}$ be as in Case~4
in the proof of Proposition~\ref{FC_prp}\ref{mnfdvan_it},\ref{mnfinj_it}
so that all $A_i,A_i\!\cap\!A_j\!\subset\!X$ are compact $n$-submanifolds with corners.
By Case~2, the claim holds for the~subsets
$$A_{odd}\equiv\bigcup_{i\in\Z}A_{2i-1}, \qquad
A_{even}\equiv\bigcup_{i\in\Z}A_{2i}, \quad\hbox{and}\quad
A_{odd}\!\cap\!A_{even}=\bigcup_{i\in\Z}A_i\!\cap\!A_{i+1}$$
of~$X$.
By Case~3, the claims holds for $A\!\equiv\!A_{odd}\!\cup\!A_{even}$ as well.
\end{proof}

\begin{rmk}\label{PD_rmk}
Let $A\!\subset\!X$ be a closed submanifold with corners and $f\!:X\!\lra\!\R$ 
a proper smooth function.
Choose a collection $(a_i)_{i\in\Z}$ of regular values of~$f$ and its restrictions to the strata of~$A$
so~that
$$a_i<a_j~~\forall\,i\!<\!j,\qquad \lim_{i\lra-\i}\!\!\!a_i=-\i, \qquad \lim_{i\lra\i}\!\!a_i=\i.$$ 
A decomposition as in Case~4 can then be obtained by taking
$A_i\!=\!f^{-1}([a_{2i-1},a_{2i+2}])$.
\end{rmk}

\section{Proof of Theorem~\ref{main_thm}}
\label{mainpf_sec}

\subsection{Homology of neighborhoods of smooth maps}
\label{neigh_subs}

\noindent 
The next proposition is an analogue of Proposition~2.2 in~\cite{Z} for
the Borel-Moore homology groups used in this paper.

\begin{prp}\label{neighb2_prp}
Let $h\!: Y\!\lra\!X$ be a smooth map between manifolds, 
$A\!\subset\!X$ be a closed subset so that $A\!\subset\!h(Y)$,
and $W\!\subset\!X$ be an open neighborhood of~$A$.
There exists an open neighborhood $U\!\subset\!W$ of $A$ such~that
$$\Hcl_{\{U\};l}(X;R)=0 \qquad\hbox{if}~~l\!>\!\dim Y.$$
\end{prp}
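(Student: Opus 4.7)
The plan is to adapt the strategy of \cite[Prop~2.2]{Z} by constructing $U$ together with a deformation retraction onto a $k$-dimensional ``spine'' $K\!\subset\!U$. This retraction will allow me to push any high-degree Borel-Moore cycle supported in $U$ down into $K$, where it will bound for dimensional reasons. The new ingredient compared with \cite{Z} is that all of the data---neighborhood, retraction, prism operator, and bounding chain---must be set up to respect local finiteness in $X$, not merely yield vanishing of ordinary singular homology.

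In the first step I will choose a smooth locally finite triangulation $T$ of $Y$ by $k$-simplices $\sigma_\alpha$ and, for each $\alpha$, select a precompact open subset $V_\alpha\!\subset\!W$ containing $h(\bar\sigma_\alpha)$ together with a deformation retraction $H^\alpha_t\!:\!V_\alpha\!\lra\!V_\alpha$ from $\id_{V_\alpha}$ onto $h(\bar\sigma_\alpha)$. Following the proof of \cite[Prop~2.2]{Z}, by shrinking the $V_\alpha$'s and subdividing $T$ near the singular locus of $h$, one can arrange that $\{V_\alpha\}$ is locally finite in $X$, that the partial retractions glue into a global retraction $r\!:\!U\!\lra\!K$ with $U\!:=\!\bigcup_\alpha\!V_\alpha$ an open neighborhood of $A$ inside $W$ and $K$ a closed $k$-dimensional locally finite simplicial subcomplex of $U$, and that the associated global homotopy $H_t$ satisfies $H_t(V_\alpha)\!\subset\!V_\alpha$ for every $\alpha$.

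In the second step I will take a cycle $c\!\in\!\Scl_{\{U\};l}(X;R)$ with $l\!>\!k$ and show that it is a boundary. Applying Corollary~\ref{BMpart_crl} to the cover $\cA\!=\!\{V_\alpha\}$ of $U$, I may assume, modulo a boundary in $\Scl_{\{U\};l+1}(X;R)$, that $c\!\in\!\Scl_{\cA;l}(X;R)$, so that each simplex of $c$ has image in some $V_{\alpha(\sigma)}$. The standard prism operator $P$ built from the simplicial decomposition of $\Delta^l\!\times\![0,1]$ and from $H_t\!\circ\!(\sigma\!\times\!\id)$ sends each such $\sigma$ to a chain with support in $V_{\alpha(\sigma)}$ (by the invariance $H_t(V_{\alpha(\sigma)})\!\subset\!V_{\alpha(\sigma)}$); by the local finiteness of $\{V_\alpha\}$, $P$ extends to an operator $\Scl_{\{U\};*}(X;R)\!\lra\!\Scl_{\{U\};*+1}(X;R)$ satisfying $\partial_X P + P\partial_X = r_\# - \id$. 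Consequently $c\!=\!r_\#(c) + \partial_X(-P(c))$. The cycle $r_\#(c)$ is supported in $K$; via the proper closed inclusion $K\!\hookrightarrow\!X$ it corresponds to an element of $\Scl_l(K;R)$, and since $K$ is a locally finite $k$-dimensional simplicial complex we have $\Hcl_l(K;R)\!=\!0$ for $l\!>\!k$, so $r_\#(c)\!=\!\partial b'$ for some $b'\!\in\!\Scl_{l+1}(K;R)\!\subset\!\Scl_{\{U\};l+1}(X;R)$. Hence $c\!=\!\partial(b'\!-\!P(c))$ is a boundary in $\Scl_{\{U\};l+1}(X;R)$, and $[c]\!=\!0\!\in\!\Hcl_{\{U\};l}(X;R)$.

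The main obstacle will be the construction in the first step: producing a genuine deformation retraction $r\!:\!U\!\lra\!K$ with $H_t(V_\alpha)\!\subset\!V_\alpha$ in the presence of possible non-immersed points of $h$, while simultaneously enforcing precompactness and local finiteness of $\{V_\alpha\}$. As in \cite[Prop~2.2]{Z}, this is handled by sufficiently fine subdivision of $T$ near the singular locus of $h$; the local finiteness aspect forces additional care in the shrinking, which is why these neighborhoods are ``more carefully chosen'' than those in \cite{Z}.
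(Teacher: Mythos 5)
Your step two would be fine if the data promised in step one existed, but step one contains the real gap: the retraction you posit cannot be constructed in general. The set $h(\ov\si_\al)$ is merely the image of a compact simplex under a smooth map; since $h$ is neither injective nor an immersion, this image need not be a simplicial complex, nor even an absolute neighborhood retract (a smooth path can have, e.g., a Hawaiian-earring-like image), so no neighborhood $V_\al$ deformation retracts onto it, and a fortiori the local retractions cannot be glued into a global $r\!:U\!\lra\!K$ onto a locally finite $k$-dimensional \emph{simplicial} complex with $H_t(V_\al)\!\subset\!V_\al$. Subdividing a triangulation of $Y$ near the singular locus of $h$ does not repair this, because the pathology lives in the image, not in the domain. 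You also cite \cite[Proposition~2.2]{Z} as doing this, but that proof does not retract onto the image of $h$ either: both \cite{Z} and the present paper triangulate the \emph{target} $X$ transversally to $h$ (via \cite{Z2}), so that $h(N_k(h))$ misses the $(n\!-\!1\!-\!k)$-skeleton, and take $U$ to be the union of open stars $\eta(\St(b_\si,\sd K))$ of barycenters of simplices of dimension at least $n\!-\!k$ meeting $A$. The vanishing is then obtained with no retraction and no homotopy invariance (which $\Hcl$ lacks): $U$ is written as $U_{n-k}\!\cup\!\cdots\!\cup\!U_n$, each $U_m$ and each multi-intersection being a locally finite disjoint union of contractible stars with compact closure, so that \e_ref{SclWdisj_e} and \e_ref{SclWcmpt_e} give $\Hcl_{\{U_{m_1}\cap\ldots\cap U_{m_j}\};l}(X;R)\!=\!0$ for $l\!\ge\!1$, and a Mayer--Vietoris induction over the at most $k\!+\!1$ pieces yields $\Hcl_{\{U\};l}(X;R)\!=\!0$ for $l\!>\!k$.

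Two further points in your step two would also need justification even granting a spine. First, you assert that $K\!\hookrightarrow\!X$ is a proper closed inclusion; but $h(Y)$, and hence any spine built from it, need not be closed in $X$ (only $A$ is assumed closed), and without closedness in $X$ a bounding chain $b'$ that is locally finite in $K$ can fail to be locally finite in $X$ (its simplices may accumulate on $\ov{K}\!-\!K$), so $b'$ need not lie in $\Scl_{\{U\};l+1}(X;R)$. Second, the vanishing $\Hcl_l(K;R)\!=\!0$ for $l\!>\!k$ for a locally finite $k$-dimensional complex is true but is itself a nontrivial comparison statement (singular versus simplicial Borel--Moore homology) that the paper never needs, precisely because the star/Mayer--Vietoris argument reduces everything to ordinary homology of contractible precompact sets via \e_ref{SclWcmpt_e}.
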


\vspace{.2in}

\noindent
If $h\!: Y\!\lra\!X$ is a smooth map and
$k$ is a nonnegative integer, put
$$N_k(h)=\big\{y\!\in\!Y\!: \rk \nd_yh\!\le\!k\big\}.$$
Proposition~\ref{neighb2_prp} follows from Lemma~\ref{neighb2_lmm}
applied with $k\!=\!\dim Y$.\\

\noindent
For a simplicial complex $K$, we denote by $|K|$ a geometric realization of~$K$ 
in a Euclidean space in the sense of \cite[Section~3]{Mu2}
and by $\sd K$ the barycentric subdivision of~$K$.
The simplicies of $\sd K$ are the~sets 
$$\tau=b_{\si_1}\ldots b_{\si_j}\equiv \big\{b_{\si_1},\ldots,b_{\si_j}\big\}
\qquad\hbox{with}\quad \si_1,\ldots,\si_j\!\in\!K, \quad
\si_1\!\subsetneq\!\ldots\!\subsetneq\!\si_j.$$
In a geometric realization $|K|\!=\!|\sd K|$, $b_{\si}$ corresponds to the barycenter 
of the simplex $\si\!\in\!K$.
For $l\!\in\!\Z^{\ge0}$, denote by $K_l\!\subset\!K$ the \sf{$l$-skeleton} of~$K$.\\

\noindent
For a simplex $\si\!\in\!K$, let
$$\St(\si,K)=
\bigcup_{\begin{subarray}{c}\si'\in K\\ \si\subset\si'\end{subarray}}\!\!\Int\si'
\subset |K|$$
be the (\sf{open}) \sf{star of $\si$ in~$K$};  see \cite[Section~62]{Mu2}.
Its closure in~$|K|$ is the \sf{closed~star}
$$\ov\St(\si,K)\equiv
\bigcup_{\begin{subarray}{c}\si'\in K\\ \si\subset\si'\end{subarray}}\!\!|\si'|
\subset |K|$$
of $\si$ in~$K$ and is compact if $\si$ is contained in only finitely many simplices.
\\

\noindent
A \sf{triangulation} of a manifold~$X$ is a pair
$T\!=\!(K,\eta)$ consisting of a simplicial complex
and a homeomorphism $\eta\!:|K|\!\lra\!X$
such that $\eta|_{\Int\si}$ is smooth for every \hbox{simplex $\si\!\in\!K$}.

\begin{lmm}\label{neighb2_lmm}
Let $h\!: Y\!\lra\!X$ be a smooth map and $k\!\in\!\Z^{\ge0}$.
For every closed subset \hbox{$A\!\subset\!X$} such that $A\!\subset\!h(N_k(h)\!)$
and an open neighborhood $W\!\subset\!X$ of~$A$,
there exists an open neighborhood $U\!\subset\!W$ of $A$ such~that
\BE{neighb2_e0}\Hcl_{\{U\};l}(X;R)=0 \qquad\hbox{if}~~l\!>\!k.\EE
\end{lmm}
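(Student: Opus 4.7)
The strategy is to construct $U$ as a thin open neighborhood of a locally finite union of $k$-dimensional smooth pieces covering $A$ and to derive the vanishing in~\e_ref{neighb2_e0} via the tools of Section~\ref{BMsubcom_subs}, paralleling the approach of \cite[Proposition~2.2]{Z} in the standard singular setting.

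First, I would exploit the hypothesis $A\!\subset\!h(N_k(h))$ via the rank stratification
$$N_k(h)=\bigsqcup_{j=0}^{k}\bigl(N_j(h)\!\setminus\!N_{j-1}(h)\bigr),$$
with the convention $N_{-1}(h)\!=\!\eset$. On the $j$-th stratum, the constant rank theorem presents $h$ locally as projection onto a $j$-dimensional slice with $j\!\le\!k$, so every point of $h(N_k(h))$ admits a neighborhood $V$ in~$X$ such that $V\!\cap\!h(N_k(h))$ lies in the image of a smooth map from a $k$-dimensional ball. By paracompactness of~$X$, I would extract a locally finite collection $\{V_i\}_{i\in\cI}$ of such neighborhoods, with $\ov{V_i}$ compact and contained in~$W$, together with smooth maps $g_i\!:B^k\!\lra\!V_i$ satisfying $A\!\cap\!V_i\!\subset\!g_i(B^k)$.

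Second, I would take $U_i\!\subset\!V_i$ to be a sufficiently thin open tubular neighborhood of $g_i(B^k)$ which deformation retracts onto $g_i(B^k)$ and has $\ov{U_i}$ compact in~$V_i$, and then set $U\!=\!\bigcup_{i\in\cI}\!U_i$. The collection $\{U_i\}$ is a locally finite open cover of~$U$ in~$X$, and by starting from a smooth triangulation of~$X$ and applying a Sard-type perturbation before thickening, one can arrange that no $(k\!+\!2)$-fold intersection $U_{i_0}\!\cap\!\ldots\!\cap\!U_{i_{k+1}}$ is nonempty, so that the nerve of the cover has dimension at most~$k$.

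Third, for the homological conclusion: since each $\ov{U_i}$ is compact, \e_ref{SclWcmpt_e} gives $\Scl_{\{U_i\};*}(X;R)\!=\!S_*(U_i;R)$, and the deformation retraction onto $g_i(B^k)$ makes $U_i$ contractible, so $\Hcl_{\{U_i\};l}(X;R)\!=\!0$ for $l\!>\!0$; the same holds for each non-empty intersection $U_{i_0}\!\cap\!\ldots\!\cap\!U_{i_r}$. Applying Corollary~\ref{BMpart_crl} with $\cA\!=\!\{U_i\}_{i\in\cI}$ identifies $\Hcl_{\{U\};*}(X;R)$ with the homology of $\Scl_{\cA;*}(X;R)$, and an iterated Mayer-Vietoris argument built from Corollary~\ref{MVSpanier_crl}, using that the nerve has dimension at most~$k$, yields the vanishing in degrees $l\!>\!k$. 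The main obstacle is this last step: Corollary~\ref{MVSpanier_crl} treats only two subspaces at a time, so iterating it over a (possibly infinite) locally finite cover requires either a careful induction respecting local finiteness or a Mayer-Vietoris spectral sequence for locally finite covers, and it is here that the dimension control on the nerve from the Sard perturbation is crucial.
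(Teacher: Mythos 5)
Your proposal breaks down at its very first step, and the later steps inherit the damage. You invoke the constant rank theorem on the strata $N_j(h)\!\setminus\!N_{j-1}(h)$, but the rank of $\nd h$ is only lower semicontinuous: at a point where the rank is exactly $j$ it may jump to a larger value arbitrarily nearby, the stratum is not open, and the constant rank theorem simply does not apply there. Since the points of $N_k(h)$ mapping onto $A$ may consist entirely of such non-locally-constant-rank points, your claim that every point of $h(N_k(h))$ has a neighborhood $V$ with $V\!\cap\!h(N_k(h))$ contained in the image of a smooth map from a $k$-ball is unjustified; nothing in the hypotheses guarantees that $h(N_k(h))$ is locally ``of dimension at most $k$'' in the paper's parametrized sense, and the paper never uses such a statement. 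The second and third steps contain further unproved claims that are really the heart of the lemma: a ``Sard-type perturbation'' does not by itself produce a cover of the \emph{fixed} closed set $A$ by thin neighborhoods with nerve of dimension at most $k$ (general position controls intersections of the $k$-dimensional images, not the multiplicity of their open thickenings, and Lebesgue covering theory only gives multiplicity $n\!+\!1$); and neighborhoods of images $g_i(B^k)$ (which are merely images of smooth maps, not submanifolds or ANRs) need not deformation retract onto them, nor need the multiple intersections $U_{i_0}\!\cap\!\ldots\!\cap\!U_{i_r}$ be acyclic, which your nerve/Mayer--Vietoris argument would require. Finally, the iterated Mayer--Vietoris over an infinite locally finite cover, which you flag as an obstacle, is not addressed.

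For comparison, the paper's proof avoids all of these issues by a different mechanism: it chooses a triangulation $T\!=\!(K,\eta)$ of $X$ subordinate to the cover $\{X\!-\!A,W\}$ and, using the transverse-triangulation theorem of \cite{Z2}, arranges that $h$ is transverse to $\eta|_{\Int\si}$ for every $\si\!\in\!K$. Transversality forces $h(N_k(h))$ to miss the $(n\!-\!1\!-\!k)$-skeleton, so $A$ is covered by the open stars $\eta(\St(b_\si,\sd K))$ of barycenters of simplices of dimension at least $n\!-\!k$ meeting $A$. Grouping these stars by the dimension $m$ of $\si$ yields only $k\!+\!1$ open sets $U_{n-k},\ldots,U_n$, each of which, together with all their mutual intersections, is a locally finite disjoint union of contractible open stars with compact closures; the vanishing of $\Hcl_{\{\cdot\};l}$ for $l\!\ge\!1$ then follows from \e_ref{SclWdisj_e} and \e_ref{SclWcmpt_e}, and a finite induction using only the two-set Mayer--Vietoris sequence of Corollary~\ref{MVSpanier_crl} gives $\Hcl_{\{U\};l}(X;R)\!=\!0$ for $l\!>\!k$. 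If you want to salvage your outline, you would need a substitute for the transversality input (to get the codimension bound on where $A$ can sit) and a combinatorial device, like the barycentric-star decomposition, that simultaneously guarantees acyclic pieces, acyclic intersections, and a cover by at most $k\!+\!1$ families of disjoint sets.
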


\begin{proof} Let $n\!=\!\dim X$.
Since the open subsets $X\!-\!A,W\!\subset\!X$ cover~$X$, 
there exists a triangulation (e.g. by repeated subdivision)
$T\!=\!(K,\eta)$ of~$X$ such that 
the image of every simplex~$\si\!\in\!K$ is contained either in $X\!-\!A$ or in~$W$.
By the proof of \cite[Theorem~1]{Z2}, we can also assume that
the smooth map~$h$ is transverse to $\eta|_{\Int\si}$  for every $\si\!\in\!K$. 
In particular, a dimension count shows
$$h\big(N_k(h)\!\big)\subset
\eta\big(|K|\!-\!|K_{n-1-k}|\big)=
\!\bigcup_{\begin{subarray}{c}\si\in K\\ \dim\si\ge n-k\end{subarray}}
\!\!\!\!\!\!\!\!\eta(\Int\si).$$
Since $A\!\subset\!h(N_k(h)\!)$, it follows~that
$$A\subset U\!\equiv\!\!
\bigcup_{\begin{subarray}{c}\si\in K\\ \dim\si\ge n-k\\ 
\eta(|\si|)\cap A\neq\eset\end{subarray}}\!\!\!\!\!\!\!\!\eta(\Int\si)
=\bigcup_{\begin{subarray}{c}\si\in K\\ \dim\si\ge n-k\\ 
\eta(|\si|)\cap A\neq\eset\end{subarray}}\!\!\!\!\!\!\!\!\eta\big(\St(b_{\si},\sd K)\!\big)
\subset\!
\bigcup_{\begin{subarray}{c}\si\in K\\ \dim\si\ge n-k\\ 
\eta(|\si|)\cap A\neq\eset\end{subarray}}
\!\!\!\!\!\!\!\!\eta\big(|\si|\big)\subset W\,.$$
We show below that the open neighborhood $U\!\subset\!W$ of $A$ satisfies~\e_ref{neighb2_e0},
adapting the proof of \cite[Lemma~2.4]{Z}.\\

\noindent
For each $m\!\in\![n]$, let 
$$U_m=\bigcup_{\begin{subarray}{c}\si\in K\\ \dim\si=m\\ 
\eta(\si)\cap A\neq\eset\end{subarray}}
\!\!\!\!\!\!\!\eta\big(\St(b_{\si},\sd K)\!\big)\subset W.$$
For $m_1,\ldots,m_j\!\in\![n]$ with $m_1\!<\!\ldots\!<\!m_j$, let
$$\cA_{m_1\ldots m_j}=
\big\{\!(\si_1,\ldots,\si_j)\!\in\!K^j\!:
\si_1\!\subset\!\ldots\!\subset\!\si_j,~\dim\si_1\!=\!m_1,~\ldots,~\dim\si_j\!=\!m_j,
~\eta(\si_1)\!\cap\!A\!\neq\!\eset\big\}.$$
We note that 
\begin{alignat*}{2}
\St(b_{\si},\sd K)\cap\St(b_{\si'},\sd K)&=\eset
&\qquad&\hbox{if}~~\si\!\not\subset\!\si'~\hbox{and}~\si\!\not\supset\!\si'\,,\\
\St(b_{\si_1},\sd K)\cap\ldots\cap\St(b_{\si_j},\sd K)&=\St(b_{\si_1}\ldots b_{\si_j},\sd K)
&\qquad&\hbox{if}~\si_1\!\subset\!\ldots\!\subset\!\si_j\,.
\end{alignat*}
Thus, every intersection $U_{m_1}\!\cap\!\ldots\!\cap\!U_{m_j}$ with $m_1\!<\!\ldots\!<\!m_j$
is a disjoint union of the open stars $\eta(\St(b_{\si_1}\ldots b_{\si_j},\sd K)\!)$
with $(\si_1,\ldots,\si_j)\!\in\!\cA_{m_1\ldots m_j}$.\\

\noindent
Since the collection $\eta(\St(b_{\si_1}\ldots b_{\si_j},\sd K)\!)$
with $(\si_1,\ldots,\si_j)\!\in\!\cA_{m_1\ldots m_j}$ is locally finite in~$X$ and
consists of disjoint subsets, \e_ref{SclWdisj_e} gives
\BE{neighb2_e7} 
\Hcl_{\{U_{m_1}\cap\ldots\cap U_{m_j}\};*}(X;R)=
\prod_{(\si_1,\ldots,\si_j)\in\cA_{m_1\ldots m_j}}
\hspace{-.45in}\Hcl_{\{\eta(\St(b_{\si_1}\ldots b_{\si_j},\sd K))\};*}(X;R)\EE
for all $m_1,\ldots,m_j\!\in\![n]$ with $m_1\!<\!\ldots\!<\!m_j$.
Since the closure of each contractible subset $\eta(\St(b_{\si_1}\ldots b_{\si_j},\sd K)\!)$
in~$X$ is compact, \e_ref{SclWcmpt_e} gives
$$\Hcl_{\{\eta(\St(b_{\si_1}\ldots b_{\si_j},\sd K)\!)\};l}(X;R)
=H_l\big(\eta(\St(b_{\si_1}\ldots b_{\si_j},\sd K)\!);R\big)=0
~~\forall~l\!\neq\!0,~(\si_1,\ldots,\si_j)\!\in\!\cA_{m_1\ldots m_j}.$$
Combining this with~\e_ref{neighb2_e7}, we obtain 
\BE{neighb2_e9} 
\Hcl_{\{U_{m_1}\cap\ldots\cap U_{m_j}\};l}(X;R)=0 \quad\forall~l\!\ge\!1.\EE
By induction on $j\!=\!1,2,\ldots$, Corollary~\ref{MVSpanier_crl} (Mayer-Vietoris) 
and~\e_ref{neighb2_e9} give
$$\Hcl_{\{U_{m_1}\cup\ldots\cup U_{m_j}\};l}(X;R)=0 \quad\forall~l\!\ge\!j.$$
Since $U=\!U_{n-k}\!\cup\!\ldots\!\cup\!U_n$, this gives~\e_ref{neighb2_e0}.
\end{proof}

\subsection{Oriented Borel-Moore homology}
\label{OrientedBM_subs}

\noindent
The construction of the oriented singular chain complex~$\ov{S}_*(X;\Z)$ in \cite[Section~2.3]{Z}
readily extends to locally finite chains.
Cycles are much easier to construct in the resulting quotient chain complexes~$\ovScl_*(X;R)$
and~$\ovScl_*(X,\{U\};R)$.
By Proposition~\ref{isom1_prp} below, the homologies~$\ovHcl_*(X;R)$ of $\ovScl_*(X;R)$
and~$\ovHcl_*(X,\{U\};R)$ of~$\ovScl_*(X,\{U\};R)$  
are naturally isomorphic to~$\Hcl_*(X;R)$ and $\Hcl_*(X,\{U\};R)$, respectively.\\

\noindent
For $k\!\in\!\Z^{\ge0}$ and $\tau\!\in\!\cS_k$, let
$$\ti\tau=\Id_{\De^k}-(\sign\tau)\tau\in S_k(\De^k;R).$$
For a topological space~$X$, let 
$$S_k'(X;R)\subset S_k(X;R)$$
be the $R$-submodule generated by the chains $\si_{\#}(\ti\tau)\!\in\!S_k(X;R)$
with $\si\!\in\!\Hom(\De^k,X)$ and $\tau\!\in\!\cS_k$.
In the notation~\e_ref{BMch_e}, define
$$\prScl_k(X;R)=
\Big\{\sum_{\si\in\Hom(\De^k,X)}\sum_{\tau\in\cS_k}
\!\!a_{\si,\tau}\si_{\#}(\ti\tau)\!\in\!\Scl_k(X;R)\!:a_{\si,\tau}\!\in\!R\Big\}.$$
In the perspective of~\e_ref{cmapdfn_e}, $\prScl_k(X;R)$ consists of the singular chains
$c\!\in\!\Scl_k(X;R)$  such~that
$$c\big|_{\cS_k\si}\in\big\{\si_{\#}(c')|_{\cS_k\si}\!:c'\!\in\!S_k'(\De^k;R)\big\}
~~\forall\,\si\!\in\!\Hom(\De^k,X), \quad
\hbox{where}~~\cS_k\si\equiv\{\si\!\circ\!\tau\!:\tau\!\in\!\cS_k\}.$$
If in addition $U\!\subset\!X$, let
$$\prScl_{\{U\};*}(X;R)=\Scl_{\{U\};*}(X;R)\!\cap\!\prScl_*(X;R).$$

\vspace{.2in}

\noindent
By \cite[Lemma~2.6]{Z}, $\prt\wt\tau\!\in\!S'_{k-1}(\De^k)$
for all $\tau\!\in\!\cS_k$ and
$k\!\in\!\Z^{\ge0}$.
Thus, $\prScl_*(X;R)$ is a subcomplex of $(\Scl_*(X;R),\prt_X)$ and 
$\prScl_{\{U\};*}(X;R)$ is a subcomplex of~$(\Scl_{\{U\};*}(X;R),\prt_X)$.
Let
\begin{gather*}
\ovScl_*(X;R)=\frac{\Scl_*(X;R)}{\prScl_*(X;R)}, \qquad
\ovScl_{\{U\};*}(X;R)=\frac{\Scl_{\{U\};*}(X;R)}{\prScl_{\{U\};*}(X;R)}
\subset \ovScl_*(X;R), \\
\ovScl_*(X,\{U\};R)=\frac{\ovScl_*(X;R)}{\ovScl_{\{U\};*}(X;R)}\,.
\end{gather*}
We denote the image of a Borel-Moore singular chain $c\!\in\!\Scl_k(X;R)$
in~$\ovScl_k(X;R)$ by~$\{c\}$,
the induced boundary operator on~$\ovScl_k(X;R)$ by~$\ov\prt_X$, 
and the homologies of the above three chain complexes by
$\ovHcl_*(X;R)$, $\ovHcl_{\{U\};*}(X;R)$, and $\ovHcl_*(X,\{U\};R)$,
respectively.
The quotient projection maps on the chain complexes induce homomorphisms
\BE{IsomHom_e}\begin{split} &\Hcl_*(X;R)\lra\ovHcl_*(X;R), \qquad
\Hcl_{\{U\};*}(X;R)\lra\ovHcl_{\{U\};*}(X;R), \\
&\hspace{.8in}
\Hcl_*\big(X,\{U\};R\big)\lra\ovHcl_*\big(X,\{U\};R\big).
\end{split}\EE

\vspace{.2in}

\noindent
If $h\!: X\!\lra\!Y$ is a proper continuous map between topological spaces
and \hbox{$f(U)\!\subset\!W\!\subset\!Y$}, 
the induced homomorphism 
$$h_{\#}\!: \Scl_*(X;R)\lra S_*(Y;R)$$
takes $\prScl_{\{U\};*}(X;R)$ into $\prScl_{\{W\};*}(Y;R)$.
Thus, $h_{\#}$ induces homomorphisms
\begin{equation*}\begin{split} &h_*\!:\ovHcl_*(X;R)\lra\ovHcl_*(Y;R), \qquad
h_*\!:\ovHcl_{\{U\};*}(X;R)\lra\ovHcl_{\{W\};*}(Y;R), \\
&\hspace{.8in}
h_*\!:\ovHcl_*\big(X,\{U\};R\big)\lra\ovHcl_*\big(Y,\{W\};R\big).
\end{split}\end{equation*}

\begin{prp}\label{isom1_prp}
For any topological space~$X$, the homomorphisms~\e_ref{IsomHom_e} are isomorphisms.
\end{prp}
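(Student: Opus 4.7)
The plan is to reduce Proposition~\ref{isom1_prp} to the acyclicity of the subcomplexes $\prScl_*(X;R)$ and $\prScl_{\{U\};*}(X;R)$, and then derive that acyclicity from a rigid chain contraction built via the pre-chain map framework of Section~\ref{BMhom_sec}.

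\textbf{Setup and reduction.} Each of the first two homomorphisms in \e_ref{IsomHom_e} is induced by the quotient projection in a short exact sequence of chain complexes,
$$0\lra\prScl_*(X;R)\lra\Scl_*(X;R)\lra\ovScl_*(X;R)\lra 0$$
(and its $\Scl_{\{U\};*}$-analogue). The associated long exact sequences in homology immediately give that the first two maps in \e_ref{IsomHom_e} are isomorphisms once the corresponding kernels are acyclic. Given these, the third map is handled by the Five Lemma applied to the ladder between the long exact sequences of the pairs $\Scl_{\{U\};*}(X;R)\!\subset\!\Scl_*(X;R)$ and $\ovScl_{\{U\};*}(X;R)\!\subset\!\ovScl_*(X;R)$.

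\textbf{Acyclicity.} The finite analogue---that $S'_*(X;R)\!\subset\!S_*(X;R)$ is acyclic---is established in \cite[Section~2.3]{Z}. I would reconstruct it inside the pre-chain map language introduced in Section~\ref{BMhom_sec}. Concretely, for each $k\!\ge\!1$ and $\tau\!\in\!\cS_k$, I would produce model chains $\bar{D}_{k,\tau}\!\in\!S_{k+1}(\De^k;R)$ so that the assignment $\sigma_{\#}(\ti\tau)\!\mapsto\!\sigma_{\#}(\bar{D}_{k,\tau})$ defines, at the level of models, a chain contraction of $S'_*(X;R)$. The contractibility of $\De^k$, together with the triviality of $\cS_0$ (which forces $\prScl_0(X;R)\!=\!0$), allows these model chains to be chosen inductively by invoking Lemma~\ref{Dhb_lmm}. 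Because the resulting models assemble into a rigid map in the sense of \e_ref{hrigid_e}--\e_ref{hrigid_e2}, Lemma~\ref{hrigid_lmm} extends the contraction uniquely to a map $D_*\!:\prScl_*(X;R)\!\lra\!\prScl_{*+1}(X;R)$. The support condition \e_ref{hrigid_e5} then guarantees that $D_*$ restricts simultaneously to a chain contraction of $\prScl_{\{U\};*}(X;R)$, settling both acyclicities.

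\textbf{Main obstacle.} The delicate point is arranging the model chains $\bar{D}_{k,\tau}$ coherently with the $\cS_k$-action and the face inclusions $\io_{k;p}$, so that they genuinely descend to a map on the relation-heavy subcomplex $\prScl_*(X;R)$ (whose generators $\sigma_{\#}(\ti\tau)$ are not freely parametrized by pairs $(\sigma,\tau)$). This is an equivariant-acyclic-models obstruction, soluble by careful induction on $k$ in the style of Lemma~\ref{Dhb_lmm}: at each stage, the face-compatibility identity \e_ref{hbrigid_e4} dictates the boundary of the next $\bar{D}_{k,\tau}$, and acyclicity of $S_*(\De^k;R)$ in positive degrees supplies a preimage. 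Once handled at the model level, the transfer to the Borel-Moore setting is automatic through Lemma~\ref{hrigid_lmm}, with no further local-finiteness or support bookkeeping required.
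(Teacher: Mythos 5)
Your reduction step is exactly the paper's: the short exact sequences of chain complexes, their long exact homology sequences, and the Five Lemma for the third map in \e_ref{IsomHom_e}, with everything resting on the acyclicity of $\prScl_*(X;R)$ and $\prScl_{\{U\};*}(X;R)$. The gap is in how you propose to get that acyclicity. You want to define a contraction only on the primed subcomplex by assigning model chains $\bar D_{k,\tau}$ to the generators $\si_{\#}(\ti\tau)$, and you correctly identify that these generators are not freely parametrized --- but the induction you then describe (face compatibility via \e_ref{hbrigid_e4} plus acyclicity of $S_*(\De^k;R)$) only controls boundaries and does nothing to resolve that obstruction. Concretely, for $\tau_1,\tau_2\!\in\!\cS_k$ one has $\si_{\#}(\widetilde{\tau_1\tau_2})=\si_{\#}(\ti\tau_1)+(\sign\tau_1)\,(\si\!\circ\!\tau_1)_{\#}(\ti\tau_2)$, so well-definedness of $\si_{\#}(\ti\tau)\mapsto\si_{\#}(\bar D_{k,\tau})$ forces cocycle-type identities such as $\bar D_{k,\tau_1\tau_2}=\bar D_{k,\tau_1}+(\sign\tau_1)\,\tau_{1\#}\bar D_{k,\tau_2}$ in $S_{k+1}(\De^k;R)$, simultaneously with the face conditions; choosing preimages degree by degree does not guarantee these, and Lemma~\ref{Dhb_lmm} offers no help here since its induction relies precisely on the models being attached to the free generators $\Hom(\De^k,X)$. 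Relatedly, Lemma~\ref{hrigid_lmm} extends maps defined on $\Hom(\De^k,X)$, not maps defined only on the primed generators, so the asserted ``automatic'' transfer to the Borel-Moore setting also presupposes an operator defined on all simplices.

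The paper avoids all of this by importing the natural operator $D_X$ of \cite[Lemma~2.7]{Z}: it is defined on the whole complex $S_*(X;R)$ by a single model per degree, $D_X\si=\si_{\#}\big(D_{\De^k}(\id_{\De^k})\big)$, so well-definedness is automatic, rigidity holds, and Lemma~\ref{hrigid_lmm} extends it to $\Scl_*(X;R)$ compatibly with $\Scl_{\{U\};*}(X;R)$; the two key facts --- that it preserves $S'_*$ and satisfies $\prt_XD_X=(-1)^{k+1}\Id+D_X\prt_X$ \emph{on} $S'_*$ --- are exactly the content of that cited lemma. Note also that the contraction identity cannot hold on all of $S_*(X;R)$ (it would force $H_*(X;R)=0$), so a globally defined operator that contracts only the subcomplex is genuinely what is needed; your ``null-homotopy in the style of Lemma~\ref{Dhb_lmm}'' framing does not capture this, and your argument as written leaves the equivariance/well-definedness obstruction --- the actual mathematical content of the step --- unresolved. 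Either prove the cocycle-compatible choice of models (essentially redoing \cite[Lemma~2.7]{Z}) or simply cite it as the paper does.
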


\begin{proof} The natural transformation of functors $D_X\!: S_*\!\lra\! S_{*+1}$ 
provided by \cite[Lemma~2.7]{Z} satisfies
\BE{isom1_e3}D_X\big(S_k'(X;R)\!\big)\subset S_{k+1}'(X;R) \quad\hbox{and}\quad
\prt_XD_X\big|_{S_k'(X;R)}=\big\{\!(-1)^{k+1}\Id+D_X\prt_X\!\big\}\!\big|_{S_k'(X;R)}\,.\EE
Define
$$\hb\!:\Hom(\De^k,X)\lra S_{k+1}(\De^k;R), \quad \hb(\si)=D_{\De^k}\big(\id_{\De^k}\big).$$
By the naturality of $D_X$ (or \cite[(2.11)]{Z} and (\ref{hmap_e2b})),
$$D_X\!=\!\hb_{\#}\!: S_k(X;R)\lra S_{k+1}(X;R).$$
By Lemma~\ref{hrigid_lmm}, $D_X$ thus extends to a homomorphism
$$D_X\!=\!\hb_{\#}\!: \Scl_k(X;R)\lra \Scl_{k+1}(X;R),$$
which is natural with respect to proper continuous maps.
By~\e_ref{isom1_e3}, 
\BE{isom1_e7}
D_X\big(\prScl_k(X;R)\!\big)\subset\prScl_{k+1}(X;R) \quad\hbox{and}\quad
\prt_XD_X\big|_{\prScl_k(X;R)}=\big\{\!(-1)^{k+1}\Id+D_X\prt_X\!\big\}\!\big|_{\prScl_k(X;R)}\,.\EE
Thus, all homology groups of the chain complex $(\prScl_*(X;R),\prt_X|_{\prScl_*(X;R)})$ vanish.
Combining this with the homology long exact sequence for the exact sequence of chain complexes
$$0\lra \prScl_*(X;R)\lra \Scl_*(X;R)\lra\ovScl_*(X;R)\lra 0,$$
we conclude that the first homomorphism in~\e_ref{IsomHom_e} is an isomorphism.\\

\noindent
Since $D_X(\Scl_{\{U\};k}(X;R)\!)\!\subset\!\Scl_{\{U\};k+1}(X;R)$,
$$D_X\big(\prScl_{\{U\};k}(X;R)\!\big)\subset\prScl_{\{U\};k+1}(X;R) ~~\hbox{and}~~
\prt_XD_X\big|_{\prScl_{\{U\};k}(X;R)}=
\big\{\!(-1)^{k+1}\Id+D_X\prt_X\!\big\}\!\big|_{\prScl_{\{U\};k}(X;R)}.$$
Along with the second statement in~\e_ref{isom1_e7} and 
the homology long exact sequence for the exact sequence of chain complexes
$$0\lra \prScl_{\{U\};*}(X;R)\lra \Scl_{\{U\};*}(X;R)\lra
\ovScl_{\{U\};*}(X;R)\lra 0,$$
this implies that the second homomorphism in~\e_ref{IsomHom_e} is an isomorphism.
The claim for the third homomorphism in~\e_ref{IsomHom_e} follows from 
the homology long exact sequence for the exact sequence of chain complexes
$$0\lra \ovScl_{\{U\};*}(X;R)\lra \ovScl_*(X;R)\lra
\ovScl_*(X,\{U\};R)\lra 0,$$
the claims for the first two homomorphisms, and the Five Lemma.
\end{proof}

\noindent
If $X$ is a manifold,
the operator~$D_X$ of \cite[Lemma~2.7]{Z} sends smooth maps 
into linear combinations of smooth maps. 
Thus, the above constructions go through for the chain complexes based 
on elements in $C^{\i}(\De^k,X)$ instead of~$\hbox{Hom}(\De^k,X)$.
The two chain complexes define the same homology groups of~$X$
by Whitney Approximation Theorem \cite[Theorem~6.26]{Lee}.
In Sections~\ref{psi_sec}-\ref{isom_sec}, all chain complexes and homology groups
are based on smooth maps.\\

\noindent
From now on, we restrict the coefficient ring~$R$ to~$\Z$.
We call a tuple $(\si_i)_{i\in\cI}$ of elements of~$\hbox{Hom}(\De^k,X)$
\sf{locally finite} if for every $x\!\in\!X$ there exists an open neighborhood
$U_x\!\subset\!X$ so that the set
$$\ale_{(\si_i)_{i\in\cI}}(U_x)\equiv
\big\{i\!\in\!\cI\!:\si_i(\De^k)\!\cap\!U_x\!\neq\!\eset\big\}$$
is finite.
For any such collection,
\BE{Zsingch_e}c\equiv \sum_{i\in\cI}\si_i\in\Scl_k(X;\Z).\EE
If $k\!\in\!\Z^+$, every element of $\ovScl_k(X;\Z)$ can be represented by 
a chain as in~\e_ref{Zsingch_e} for some locally finite tuple 
$(\si_i)_{i\in\cI}$ of elements of~$\hbox{Hom}(\De^k,X)$.\\

\noindent
For~$c$ in~\e_ref{Zsingch_e}, let
$$\cB_c=\big\{\!(i,p)\!: i\!\in\!\cI,~ p\!\in\![k]\!\big\}.$$
Lemmas~\ref{gluing_l1} and~\ref{gluing_l2} below will be used to glue 
the summands in chains~$c$ as in~\e_ref{Zsingch_e} that represent cycles and 
bounding chains in~$\ovScl_*(X;\Z)$ into smooth maps from manifolds.
The two lemmas are the direct extensions of Lemmas~2.10 and~2.11 in~\cite{Z}
to the Borel-Moore chains.
They hold for the same reasons because the local finiteness conditions
implies that each boundary simplex $\si_i\!\circ\!\io_{k;p}$
with $(i,p)\!\in\!\cB_c$ appears only finitely many times in~$\prt_Xc$.

\begin{lmm}\label{gluing_l1}
If $k\!\in\!\Z^+$ and the chain \e_ref{Zsingch_e} determines a cycle 
in~$\ovScl_k(X;\Z)$,
there exist a subset $\D_c\!\subset\!\cB_c\!\times\!\cB_c$
disjoint from the diagonal and a~map 
$$\tau\!: \D_c\lra \cS_{k-1}, \qquad 
\big(\!(i_1,p_1),(i_2,p_2)\!\big)\lra\tau_{(i_1,p_1),(i_2,p_2)},$$ 
with the following properties:
\begin{enumerate}[label=(\roman*),leftmargin=*]

\item if $(\!(i_1,p_1),(i_2,p_2)\!)\!\in\!\D_c$, then
$(\!(i_2,p_2),(i_1,p_1)\!)\!\in\!\D_c$;

\item the projection $\D_c\!\lra\!\cB_c$ on either coordinate is a bijection;

\item for all $(\!(i_1,p_1),(i_2,p_2)\!)\!\in\!\D_c$, 
\begin{gather}\label{psi_e1a}
\tau_{(i_1,p_1),(i_2,p_2)}^{~-1}=\tau_{(i_2,p_2),(i_1,p_1)}, \qquad
\si_{i_1}\!\circ\!\io_{k;p_1}\!\circ\!\tau_{(i_1,p_1),(i_2,p_2)}
=\si_{i_2}\!\circ\!\io_{k;p_2},\\
\label{psi_e1b}
\hbox{and}\qquad \sign\tau_{(i_1,p_1),(i_2,p_2)}=-(-1)^{p_1+p_2}.
\end{gather}

\end{enumerate}
\end{lmm}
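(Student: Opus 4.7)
The cycle condition $\ov\prt_X\{c\}=0$ means that
\[ \prt_X c \;=\; \sum_{(i,p)\in\cB_c}(-1)^p\,\si_i\!\circ\!\io_{k;p} \]
lies in $\prScl_{k-1}(X;\Z)$. Grouping the summands according to the orbits of $\cS_{k-1}$ acting on $\Hom(\De^{k-1},X)$ by right composition, membership in~$\prScl_{k-1}$ is tested orbit-by-orbit. The plan is to extract, from the cancellation within each orbit, a symmetric pairing of~$\cB_c$ together with the intertwining permutations.

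Fix a representative $\rho\!\in\!\Hom(\De^{k-1},X)$ of one such orbit and set
\[ \cB_c(\rho) \;=\; \bigl\{(i,p)\!\in\!\cB_c:\si_i\!\circ\!\io_{k;p}\in\cS_{k-1}\!\cdot\!\rho\bigr\}. \]
Local finiteness of $(\si_i)_{i\in\cI}$ and compactness of $\bigcup_\tau(\rho\!\circ\!\tau)(\De^{k-1})$ make $\cB_c(\rho)$ finite. For each $(i,p)\!\in\!\cB_c(\rho)$ pick $\tau_{i,p}\!\in\!\cS_{k-1}$ with $\si_i\!\circ\!\io_{k;p}=\rho\!\circ\!\tau_{i,p}$. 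Unwinding the definition of~$S'_{k-1}(\De^{k-1};\Z)$ shows that the orbit-restriction of an element of $\prScl_{k-1}(X;\Z)$ has the form $\rho_{\#}(c')|_{\cS_{k-1}\cdot\rho}$ for some $c'=\sum_{\tau}a_\tau\tau$ satisfying $\sum_\tau(\sign\tau)a_\tau=0$. Matching coefficients against $\prt_X c$ yields the cancellation identity
\[ \sum_{(i,p)\in\cB_c(\rho)}(-1)^p\sign(\tau_{i,p}) \;=\; 0. \]

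Setting $\epsilon(i,p)\!\equiv\!(-1)^p\sign(\tau_{i,p})$, the identity forces $|\cB_c(\rho)_+|=|\cB_c(\rho)_-|$ for $\cB_c(\rho)_{\pm}\!\equiv\!\epsilon^{-1}(\pm1)$. Pick any bijection $\phi_\rho\!:\cB_c(\rho)_+\!\to\!\cB_c(\rho)_-$, include in~$\D_c$ both $((i_1,p_1),(i_2,p_2))$ and its swap whenever $(i_2,p_2)\!=\!\phi_\rho(i_1,p_1)$, and set
\[ \tau_{(i_1,p_1),(i_2,p_2)} \;=\; \tau_{i_1,p_1}^{-1}\tau_{i_2,p_2},\qquad \tau_{(i_2,p_2),(i_1,p_1)} \;=\; \tau_{(i_1,p_1),(i_2,p_2)}^{-1}. \]
Property~(i), property~(ii), and the first equation in~\e_ref{psi_e1a} are immediate from this construction; the second equation in~\e_ref{psi_e1a} amounts to $\rho\!\circ\!\tau_{i_1,p_1}^{-1}\tau_{i_2,p_2}=\rho\!\circ\!\tau_{i_2,p_2}=\si_{i_2}\!\circ\!\io_{k;p_2}$; and~\e_ref{psi_e1b} reduces to $\epsilon(i_1,p_1)\epsilon(i_2,p_2)=-1$, which is how $\phi_\rho$ was chosen. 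Carrying this construction out simultaneously in every orbit produces the desired~$\D_c$ and~$\tau$.

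The delicate step is the derivation of the cancellation identity when the stabilizer of~$\rho$ is nontrivial: if it contains an odd permutation, $\sign(\tau_{i,p})$ depends on the choice of~$\tau_{i,p}$, but the same ambiguity allows one to realize either sign, so any pairing~$\phi_\rho$ can be made compatible with~\e_ref{psi_e1b}; when the stabilizer lies in the alternating group, $\sign(\tau_{i,p})$ is unambiguous and the parity argument above is unchanged. In either case, local finiteness confines the orbitwise analysis to a finite combinatorial problem, and the argument is a direct extension of~\cite[Lemma~2.10]{Z}.
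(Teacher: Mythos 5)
Your overall strategy -- use local finiteness to reduce to a finite signed-cancellation problem on each right $\cS_{k-1}$-orbit of boundary faces, then match faces of opposite sign within each orbit -- is exactly the reduction the paper itself makes: its proof consists of citing \cite[Lemma~2.10]{Z} and observing that local finiteness ensures each boundary simplex appears only finitely often in $\prt_Xc$. Your finiteness argument for $\cB_c(\rho)$, the derivation of $\sum_{(i,p)\in\cB_c(\rho)}(-1)^p\sign(\tau_{i,p})=0$ when the stabilizer of $\rho$ is trivial (or lies in the alternating group, after grouping the permutations $\be$ with $\rho\circ\be$ equal to a given simplex into stabilizer cosets), and the verification of (i)--(iii) for $\tau_{(i_1,p_1),(i_2,p_2)}=\tau_{i_1,p_1}^{-1}\tau_{i_2,p_2}$ are all sound; the displayed computation should read $\si_{i_1}\!\circ\!\io_{k;p_1}\!\circ\!\tau_{i_1,p_1}^{-1}\!\circ\!\tau_{i_2,p_2}=\rho\!\circ\!\tau_{i_1,p_1}\!\circ\!\tau_{i_1,p_1}^{-1}\!\circ\!\tau_{i_2,p_2}=\rho\!\circ\!\tau_{i_2,p_2}$, but that is only a slip of notation.

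There is, however, a genuine gap precisely at the point you flag as delicate. When the stabilizer $H=\{h\!\in\!\cS_{k-1}\!:\rho\!\circ\!h\!=\!\rho\}$ contains an odd permutation, you correctly note that $\sign(\tau_{i,p})$ can be adjusted at will, but you then speak of ``any pairing $\phi_\rho$'' without establishing that a perfect matching of $\cB_c(\rho)$ exists at all: the identity $|\cB_c(\rho)_+|=|\cB_c(\rho)_-|$, which produced the matching in the other cases, is unavailable here because $\epsilon(i,p)$ is not well defined, yet property (ii) forces every element of $\cB_c(\rho)$ to be paired, and pairs must lie in a single orbit by the second equation in \e_ref{psi_e1a}. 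So you still owe the statement that $|\cB_c(\rho)|$ is even. It is true, and follows from a mod-$2$ version of your own coefficient matching: for each generator $\nu_{\#}\big(\Id_{\De^{k-1}}\!-\!(\sign\ga)\ga\big)$ of $S_{k-1}'(\De^{k-1};\Z)$, the simplex $\rho\!\circ\!\nu\!\circ\!\ga$ lies in the orbit of $\rho$ if and only if $\rho\!\circ\!\nu$ does, so the generator contributes $0$ or $\pm2$ times its coefficient to the sum of the coefficients of $\rho_{\#}(c')$ over the distinct simplices of the orbit; hence that sum is even for every element of $\prScl_{k-1}(X;\Z)$, while the corresponding sum for $\prt_Xc$ equals $\sum_{(i,p)\in\cB_c(\rho)}(-1)^p\equiv|\cB_c(\rho)|\pmod 2$. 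With this line added (and with your unqualified claim that the orbit restriction is $\rho_{\#}(c')$ with $\sum_\tau(\sign\tau)a_\tau=0$ read as holding for a suitable choice of $c'$ when $H$ is nontrivial), any perfect matching exists and your sign adjustment completes the proof.
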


\begin{lmm}\label{gluing_l2}
Suppose $k\!\ge\!1$, $(\si_{0;i})_{i\in\cI_0}$ and $(\si_{1;i})_{i\in\cI_1}$
are locally finite tuples of elements of $\hbox{Hom}(\De^k,X)$,
$(\wt\si_i)_{i\in\wt\cI}$
is a locally finite tuple of elements of~$\hbox{Hom}(\De^{k+1},X)$, and
\BE{gluing2_e}c_0\equiv\sum_{i\in\cI_0}\si_{0;i}, \quad
c_1\equiv\sum_{i\in\cI_1}\si_{1;i}, \quad
\wt{c}\equiv\sum_{i\in\wt\cI}\wt\si_i, \quad
\ov\prt\{\ti{c}\}=\{c_1\}\!-\!\{c_0\}\in\ovScl_k(X;\Z).\EE
Then there exist a subset $\D_{\ti{c}}\!\subset\!\cB_{\wt{c}}\!\times\!\cB_{\wt{c}}$
disjoint from the diagonal, disjoint subsets
\hbox{$\cB_{\ti{c}}^{(0)},\cB_{\ti{c}}^{(1)}\!\subset\!\cB_{\ti{c}}$}, 
and~maps
\begin{gather*}
\ti\tau\!: \D_{\ti{c}}\lra \cS_k, \quad 
\big(\!(i_1,p_1),(i_2,p_2)\!\big)\lra\ti\tau_{(i_1,p_1),(i_2,p_2)},\\
(\ti\io_r,\ti{p}_r)\!:\cI_r\lra\cB_{\ti{c}}^{(r)},
~~~\hbox{and}~~~
\ti\tau_r\!:\cI_r\lra\cS_k, ~~i\lra\ti\tau_{(r,i)}, \quad r=0,1,
\end{gather*}
with the following properties:
\begin{enumerate}[label=(\roman*),leftmargin=*]

\item if $(\!(i_1,p_1),(i_2,p_2)\!)\!\in\!\D_{\wt{c}}$, then
$(\!(i_2,p_2),(i_1,p_1)\!)\!\in\!\D_{\wt{c}}$;

\item the projection $\D_{\ti{c}}\!\lra\!\cB_{\ti{c}}$  on either coordinate
is a bijection onto the complement of~$\cB_{\ti{c}}^{(0)}\!\cup\!\cB_{\ti{c}}^{(1)}$;

\item for all $(\!(i_1,p_1),(i_2,p_2)\!)\!\in\!\D_{\ti{c}}$, 
\begin{gather}\label{psi_e2a}
\ti\tau_{(i_1,p_1),(i_2,p_2)}^{~-1}=\ti\tau_{(i_2,p_2),(i_1,p_1)}, \qquad
\wt\si_{i_1}\!\circ\!\io_{k+1;p_1}\!\circ\!\ti\tau_{(i_1,p_1),(i_2,p_2)}
=\wt\si_{i_2}\!\circ\!\io_{k+1;p_2},\\
\label{psi_e2b}
\hbox{and}\qquad \sign\ti\tau_{(i_1,p_1),(i_2,p_2)}=-(-1)^{p_1+p_2};
\end{gather}

\item for all $r\!=\!0,1$ and $i\!\in\!\cA_r$, 
\BE{psi_e3}
\ti\si_{\ti\io_r(i)}\!\circ\!\io_{k+1;\ti{p}_r(i)}\!\circ\!\ti\tau_{(r,i)}=\si_{r;i}
\qquad\hbox{and}\qquad \sign\ti\tau_{(r,i)}=-(-1)^{r+\ti{p}_r(i)};\EE

\item $(\ti\io_r,\ti{p}_r)$ is a bijection onto $\cB_{\wt{c}}^{(r)}$ for $r\!=\!0,1$.

\end{enumerate}
\end{lmm}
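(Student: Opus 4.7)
The plan is to reduce the statement to the finite analogue [Z, Lemma~2.11] by working one $\cS_k$-reparametrization class at a time, using the local finiteness hypotheses to ensure that each class involves only finitely many simplices from $\wt{c}$, $c_0$, and~$c_1$. First I would unwind the hypothesis: the equality $\ov\prt\{\ti{c}\}\!=\!\{c_1\}\!-\!\{c_0\}$ in $\ovScl_k(X;\Z)$ says that
$$d\equiv\prt_X\ti{c}-c_1+c_0\in\Scl_k(X;\Z)$$
lies in $\prScl_k(X;\Z)$, while the explicit expansion
$$\prt_X\wt{c}=\sum_{(i,p)\in\cB_{\wt{c}}}\!\!\!(-1)^p\,\wt\si_i\!\circ\!\io_{k+1;p}$$
is a locally finite sum by the local finiteness of~$\wt{c}$.

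Next I would partition $\hbox{Hom}(\De^k,X)$ into the orbits $\cS_k\!\cdot\!\rho$ under precomposition, and for each orbit~$\cO$ form the finite multisubset
$$\cB_{\wt{c}}^{\cO}=\big\{(i,p)\!\in\!\cB_{\wt{c}}\!:\wt\si_i\!\circ\!\io_{k+1;p}\!\in\!\cO\big\},\quad
\cI_r^{\cO}=\big\{i\!\in\!\cI_r\!:\si_{r;i}\!\in\!\cO\big\},\quad r\!=\!0,1;$$
local finiteness and the fact that the image of any simplex has compact support ensures that each of these sets is in fact finite (once we fix a representative $\rho\!\in\!\cO$, every simplex in the orbit has the same image in $X$, which is compact, so only finitely many of the $\wt\si_i\!\circ\!\io_{k+1;p}$ and the $\si_{r;i}$ can fall into $\cO$). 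The condition $d\!\in\!\prScl_k(X;\Z)$ decouples across orbits, yielding a finite identity in each $\cO$. Applying [Z, Lemma~2.11] to this finite data, I obtain, orbit-by-orbit, a pairing of the codimension-one faces $(i,p)\!\in\!\cB_{\wt{c}}^{\cO}$ coming from $\prt_X\wt{c}$ either with another such face (producing an entry of $\D_{\wt{c}}$ together with a permutation in $\cS_k$ identifying the two faces) or with a unique element of $\cI_0^{\cO}\!\cup\!\cI_1^{\cO}$ (producing an entry of $\cB_{\wt{c}}^{(r)}$ and the corresponding value of $(\ti\io_r,\ti{p}_r)$ and~$\ti\tau_r$). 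Taking the disjoint union of these finite data over all orbits gives the required global $\D_{\wt{c}}$, $\cB_{\wt{c}}^{(0)}$, $\cB_{\wt{c}}^{(1)}$, $\ti\tau$, $(\ti\io_r,\ti{p}_r)$, and $\ti\tau_r$. The sign conditions~\e_ref{psi_e2b} and the second half of~\e_ref{psi_e3} are automatic: the face $\wt\si_i\!\circ\!\io_{k+1;p}$ enters $\prt_X\wt{c}$ with sign $(-1)^p$ and the simplex $\si_{r;i}$ enters $c_0\!-\!c_1$ with sign~$(-1)^r$, so a cancellation across a pair forces the reparametrizing $\ti\tau$ to carry the advertised sign, exactly as in the proof of [Z, Lemma~2.11].

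The main obstacle is ensuring that the orbit-wise constructions assemble into a globally coherent pairing: the involutivity $\ti\tau_{(i_2,p_2),(i_1,p_1)}\!=\!\ti\tau_{(i_1,p_1),(i_2,p_2)}^{\,-1}$, the disjointness of $\cB_{\wt{c}}^{(0)}$ and $\cB_{\wt{c}}^{(1)}$, and the bijectivity of $(\ti\io_r,\ti{p}_r)$ onto~$\cB_{\wt{c}}^{(r)}$ all must hold simultaneously. Since distinct orbits contribute disjoint sets and distinct finite instances of [Z, Lemma~2.11], these global properties follow immediately from the corresponding properties in each finite instance once one checks that the orbit-wise decomposition respects the definitions of $\D_{\wt{c}}$, $\cB_{\wt{c}}^{(r)}$, and the various $\ti\tau$'s. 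This is the observation already highlighted by the authors: the local finiteness of the chains implies that each boundary simplex appears only finitely many times among the faces of~$\wt{c}$, so the argument of~\cite{Z} applies verbatim on each orbit and the global assembly is routine.
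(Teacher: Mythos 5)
Your proposal is correct and takes essentially the same route as the paper: the paper offers no detailed argument for Lemma~\ref{gluing_l2}, stating only that it is a direct extension of \cite[Lemma~2.11]{Z} and holds for the same reasons because local finiteness ensures each boundary simplex appears only finitely many times in $\prt_X\wt{c}$. Your orbit-by-orbit decomposition under the $\cS_k$-action is just a more explicit packaging of that same finiteness observation, with the one cosmetic caveat that on each orbit you are really rerunning the face-pairing argument from the proof of \cite[Lemma~2.11]{Z} rather than invoking that lemma verbatim as a black box.
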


\vspace{.2in}

\noindent
Suppose $\ov{V}$ is an oriented $k$-manifold with boundary and
$(K,\eta)$ is a triangulation of~$\ov{V}$ that restricts to a triangulation of~$\prt\ov{V}$.
Let
$$K^{\top}=\big\{\si\!\in\!K\!:\dim\si\!=\!k\big\}.$$
For each $k$-dimensional simplex $\si\!\in\!K$, let
\BE{lsidfn_e}l_{\si}\!: \De^k\lra\si\subset|K|\subset\R^{\i}\EE
be a linear map such that the composition $\eta\!\circ\!l_{\si}$ is orientation-preserving.
The fundamental class \hbox{$[\ov{V}]\!\in\!\ovHcl_k(\ov{V},\prt\ov{V};\Z)$} of~$M$
is then represented~by
$$\sum_{\si\in K^{\top}}\!\!\!\!\big\{\eta\!\circ\!l_{\si}\big\}
\in\ovScl_k\big(\ov{V},\{\prt\ov{V}\};\Z\big).$$
The corresponding sum
$$\sum_{\si\in K^{\top}}\!\!\!\!\eta\!\circ\!l_{\si}\in\Scl_k\big(\ov{V},\{\prt\ov{V}\};\Z\big) $$
may not be a cycle.
If $f\!:\ov{V}\!\lra\!X$ is a proper map and $U\!\subset\!X$ 
is a subset containing $f(\prt\ov{V})$,
then $f_*([\ov{V}])\!\in\!\ovHcl_k(X,\{U\};\Z)$ is represented
by 
$$\sum_{\si\in K^{\top}}\!\!\!\!\big\{f\!\circ\!\eta\!\circ\!l_{\si}\big\}
\in \ovScl_k\big(X,\{U\};\Z\big);$$
by the properness of $f$, the collection 
$\{f\!\circ\!\eta\!\circ\!l_{\si}\}_{\si\in K^{\top}}$
is locally finite in~$X$.

\subsection{From integral cycles to pseudocycles}
\label{psi_sec}

\noindent
In this section, we extend the constructions of \cite[Section~3.1]{Z}
from finite to locally finite singular chains and obtain the first homomorphism 
in~\e_ref{mainthm_e}.
We start with a cycle \hbox{$\{c\}\!\in\!\ovScl_k(X;\Z)$} as in Lemma~\ref{gluing_l1}
and replace each singular simplex~$\si_i$  by its composition 
with the self-map~$\vp_k$ of~$\De^k$ provided by Lemma~\ref{bdpush_lmm}. 
The functions~$\si\!\circ\!\vp_k$ still satisfy the second equation in~\e_ref{psi_e1a}, i.e.
\BE{pertmaps_e1}
\si_{i_1}\!\circ\!\vp_k\!\circ\!\io_{k;p_1}\!\circ\!\tau_{(i_1,p_1),(i_2,p_2)}
=\si_{i_2}\!\circ\!\vp_k\!\circ\!\io_{k;p_2}
\quad\forall~\big(\!(i_1,p_1),(i_2,p_2)\!\big)\!\in\!\D_c,\EE
because $\vp_k$ restricts to the identity on the boundary of~$\De^k$. 
This allows us to glue the maps $\si_i\!\circ\!\vp_k$ into a proper map~$F$
from a $k$-dimensional simplicial complex~$M$ to~$X$.
Removing the codimension~2 simplicies, we obtain a Borel-Moore pseudocycle
in the proof of Lemma~\ref{constr1_lmm}.
In the proof of Lemma~\ref{equiv1_lmm},
we use a similar procedure 
to turn a bounding chain \hbox{$\{\wt{c}\}\!\in\!\ovScl_{k+1}(X;\Z)$}
into a Borel-Moore pseudocycle equivalence between the Borel-Moore pseudocycles
determined by its boundaries.

\begin{lmm}\label{constr1_lmm}
Let $X$ be a manifold and $k\!\in\!\Z^{\ge0}$.
Every integer locally finite singular $k$-chain~$c$ as in~\e_ref{Zsingch_e}
with $\si_i\!\in\!C^{\i}(\De^k;X)$ for all $i\!\in\!\cI$ representing a cycle
in~$\ovScl_k(X;\Z)$ determines an element of~$\cHcl_k(X)$.
\end{lmm}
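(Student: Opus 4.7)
The plan is to build a smooth oriented $k$-manifold $M'$ by gluing copies of $\De^k$ according to the matching of boundary faces provided by Lemma~\ref{gluing_l1}, and to smooth out the resulting map to $X$ using the self-map $\vp_k$ of Lemma~\ref{bdpush_lmm}; the equivalence class in $\cHcl_k(X)$ of the resulting Borel-Moore $k$-pseudocycle $f\!:\!M'\!\to\!X$ is the desired element.

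First I apply Lemma~\ref{gluing_l1} to $\{c\}$ to obtain the involution $\D_c$ on $\cB_c$ together with the permutations $\tau_{(i_1,p_1),(i_2,p_2)}\!\in\!\cS_{k-1}$, and form $M$ as the quotient of $\bigsqcup_{i\in\cI}\De^k\!\times\!\{i\}$ under the relation
\begin{equation*}
\big(\io_{k;p_1}(q),i_1\big)\,\sim\,
\big(\io_{k;p_2}(\tau_{(i_2,p_2),(i_1,p_1)}(q)),i_2\big),
\qquad q\!\in\!\De^{k-1},
\end{equation*}
for each pair $((i_1,p_1),(i_2,p_2))\!\in\!\D_c$. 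Let $Y_c\!\subset\!M$ be the image of the codimension-$2$ skeletons of the summands, and set $M'\!=\!M\!-\!Y_c$. Local finiteness of $(\si_i)_{i\in\cI}$ makes $M$ Hausdorff; each interior $\Int\De^k\!\times\!\{i\}$ and each open codimension-$1$ face glued between exactly two top simplices endow $M'$ with the structure of a topological $k$-manifold, while the sign condition~\e_ref{psi_e1b} assembles the standard orientations of the summands into a coherent orientation on~$M'$.

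Next I set $f([p,i])\!=\!\si_i(\vp_k(p))$. Well-definedness follows from the identity $\si_{i_2}\!\circ\!\io_{k;p_2}\!\circ\!\tau_{(i_2,p_2),(i_1,p_1)}\!=\!\si_{i_1}\!\circ\!\io_{k;p_1}$ of~\e_ref{psi_e1a} once we use the first part of~\e_ref{bdpush_e1} to compute $\vp_k$ on the neighborhood $U^k_p$ of $\Int\De^k_p$: there $\si_i\!\circ\!\vp_k$ depends only on the restriction $\si_i\!\circ\!\io_{k;p}$. Smoothness of $\vp_k$ off the $(k\!-\!2)$-skeleton of $\De^k$ induces a compatible smooth structure on $M'$ for which $f$ is smooth. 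The remaining task is $\dim\Bd f\!\le\!k\!-\!2$, which I reduce to $\Bd f\!\subset\!f(Y_c)$. For $x\!\in\!X\!-\!f(Y_c)$, pick a compact neighborhood $V$ of $x$ disjoint from $f(Y_c)$; this is possible because the compact sets $f(Y_c\!\cap\!(\De^k\!\times\!\{i\}))$ form a locally finite family in $X$, so $f(Y_c)$ is closed. Local finiteness then confines $f^{-1}(V)$ to a finite union $S$ of closed top simplices of $M$; being closed in $M$, contained in the compact $S$, and disjoint from the closed subset $Y_c\!\cap\!S$, $f^{-1}(V)$ is compact as a subset of $M'$. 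Taking $K\!=\!f^{-1}(V)$ yields $f(M'\!-\!K)\!\cap\!\Int V\!=\!\eset$, so $x\!\notin\!\Bd f$.

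The main technical subtlety is the last step: passing from compactness in $M$ to compactness in $M'$ after removing $Y_c$. Local finiteness of the chain enters twice—to ensure $f(Y_c)$ is closed in $X$, and to confine each preimage $f^{-1}(V)$ to a finite subcomplex. Everything else is a direct adaptation of the finite-chain construction in~\cite[Section~3.1]{Z}.
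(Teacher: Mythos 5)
Your proposal is correct and takes essentially the same route as the paper: glue the top simplices according to Lemma~\ref{gluing_l1}, smooth the map via $\vp_k$ from Lemma~\ref{bdpush_lmm}, delete the image of the codimension-2 skeleta, and show $\Bd f$ lies in the (smooth, hence small) image of those skeleta -- the paper packages your last step as properness of the glued map on the full complex plus closedness of $Y_c$, which is the same argument. Two minor slips worth fixing: Hausdorffness of the quotient has nothing to do with local finiteness of $(\si_i)_{i\in\cI}$ in $X$ (it holds because the quotient is a geometric realization of a simplicial complex), and the trivial case $k=0$, where Lemmas~\ref{gluing_l1} and~\ref{bdpush_lmm} do not apply, should be disposed of separately as in the paper.
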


\begin{proof} If $k\!=\!0$, $(\si_i)_{i\in\cI}$ is a discrete collection of points of~$X$.
Thus,
$$F\!: M\!\equiv\!M'\!\equiv\!\cI\lra X, \qquad F(i)=\si_i(0),$$
is a Borel-Moore 0-pseudocycle in~$X$.\\

\noindent
Suppose $k\!\ge\!1$. Let 
$$\D_c\subset\cB_c\!\times\!\cB_c \qquad\hbox{and}\qquad
\tau\!:\D_c\!\lra\!\cS_{k-1}$$
be the subset and map corresponding to~$c$ as in Lemma~\ref{gluing_l1}.
Define
\begin{gather}\label{Mprdfn_e}
M'=\Big(\bigsqcup_{i\in\cI}\{i\}\!\times\!\De^k\Big)\!\!\Big/\!\!\!\sim,
\qquad\hbox{where}\\
\notag 
\big(i_1,\io_{k;p_1}(\tau_{(i_1,p_1),(i_2,p_2)}(t)\!)\!\big)
\sim\big(i_2,\io_{k;p_2}(t)\!\big)
\quad\forall~
\big(\!(i_1,p_1),(i_2,p_2)\!\big)\!\in\!\D_c,~t\!\in\!\De^{k-1}.
\end{gather}
Let $\pi$ be the quotient map and 
\BE{fmapdfn_e}
F\!: M'\lra X, \qquad F\big([i,t]\big)=\si_i\big(\!\vp_k(t)\!\big)
~~\forall~i\!\in\!\cI,\,t\!\in\!\De^{k+1}.\EE
This map is well-defined by~\e_ref{pertmaps_e1} and continuous by 
the universal property of the quotient topology.\\

\noindent
Since the maps $\tau_{(i_1,p_1),(i_2,p_2)}$ are linear automorphisms of~$\De^{k-1}$,
$M'$ is homeomorphic to a geometric realization of a simplicial complex (cf. \cite[Section 3]{Mu2}).
Thus, $M'$ is a Hausdorff topological space, and $\pi$ is a closed~map.
By the local finiteness of $(\si_i)_{i\in\cI}$, the~set
$$\big\{i\!\in\!\cI\!:F\big(\pi(\{i\}\!\times\!\De^k)\!\big)\!\cap\!A\!\neq\!\eset\big\}
=\ale_{(\si_i)_{i\in\cI}}(A)$$
is finite for every compact subset $A\!\subset\!X$.
Since $\pi(\{i\}\!\times\!\De^k)\!\subset\!M'$ is compact as well,
it follows that $F$ is a proper~map.
Since $X$ is second countable, $\cI$ is countable, and thus $M'$ is second countable.\\

\noindent
With $Y\!\subset\!\De^k$ denoting the $(k\!-\!2)$-skeleton,
let $M\!\subset\!M'$ be the complement of the subset 
\BE{Ycdfn_e}Y_c\equiv \pi\Big(\bigsqcup_{i\in\cI}\{i\}\!\times\!Y\Big)\subset M'.\EE
Since $M'$ is Hausdorff, $Y_c\!\subset\!M'$ is closed, 
and $F$ is a proper~map, Lemma~\ref{proper_lmm}\ref{Bdext_it}\ref{proper4} gives
\BE{constr1_e3}
\Bd F|_M\subset F(Y_c)=\bigcup_{i\in\cI}\si_i\big(\!\vp_k(Y)\!\big)
= \bigcup_{i\in\cI}\si_i(Y)\,;\EE
the last equality holds by the first equation in~\e_ref{bdpush_e1}.
Since $\si_i|_{\Int\De'}$ is smooth for all $i\!\in\!\cI$ and 
all simplices $\De'\!\subset\!\De^k$,
$\Bd F|_M$ has dimension at most $k\!-\!2$ by~\e_ref{constr1_e3}.\\

\noindent
By the above, $F|_M$ is a Borel-Moore $k$-pseudocycle, provided 
$M$ is an oriented manifold and $F|_M$ is a smooth~map.
These are local statements, and~(2) in the proof of \cite[Lemma~3.2]{Z}
applies {\it verbatim}.
\end{proof}

\begin{lmm}\label{equiv1_lmm}
Let $X$ be a manifold and $k\!\in\!\Z^{\ge0}$.
Suppose $c_0,c_1$ are integer locally finite singular $k$-chains as in~\e_ref{gluing2_e}
with \hbox{$\si_{r;i}\!\in\!C^{\i}(\De^k;X)$} for all $i\!\in\!\cI_r$ representing cycles
in~$\ovScl_k(X;\Z)$ and $(M_r',M_r,F_r)$ with $r\!=\!0,1$ are the triples corresponding 
to~$c_0,c_1$ via the construction of Lemma~\ref{constr1_lmm}. 
Every integer locally finite singular $(k\!+\!1)$-chain~$\wt{c}$ as in~\e_ref{gluing2_e}
with \hbox{$\wt\si_i\!\in\!C^{\i}(\De^{k+1};X)$} for all $i\!\in\!\wt\cI$ 
determines a Borel-Moore pseudocycle equivalence between 
the pseudocycles~$F_0|_{M_0}$ and~$F_1|_{M_1}$.
\end{lmm}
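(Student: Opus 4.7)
\noindent
The plan is to mirror the construction of Lemma~\ref{constr1_lmm} one dimension up, using Lemma~\ref{gluing_l2} in place of Lemma~\ref{gluing_l1} and the map $\wt\vp_{k+1}$ from Lemma~\ref{bdpush_lmm} in place of~$\vp_k$. First, apply Lemma~\ref{gluing_l2} to $c_0,c_1,\wt c$ to obtain the gluing data $\D_{\wt c}$, $\cB_{\wt c}^{(r)}$, $\wt\tau$, $(\wt\io_r,\wt p_r)$, and $\wt\tau_r$. In analogy with~\eqref{Mprdfn_e}, form the space
$$\wt{M}'=\Big(\bigsqcup_{i\in\wt\cI}\{i\}\!\times\!\De^{k+1}\Big)\!\!\Big/\!\!\sim,$$
where $(i_1,\io_{k+1;p_1}(\wt\tau_{(i_1,p_1),(i_2,p_2)}(t)))\!\sim\!(i_2,\io_{k+1;p_2}(t))$ for each $((i_1,p_1),(i_2,p_2))\!\in\!\D_{\wt c}$ and $t\!\in\!\De^k$, and define
$$\wt{F}\!:\wt M'\lra X, \qquad \wt F\big([i,t]\big)=\wt\si_i\big(\wt\vp_{k+1}(t)\big).$$
Well-definedness on the identifications follows from the second equation in~\eqref{psi_e2a} together with the third equation in~\eqref{bdpush_e2}, which gives $\wt\vp_{k+1}\!\circ\!\io_{k+1;p}\!=\!\io_{k+1;p}\!\circ\!\vp_k$, so that $\wt\vp_{k+1}$ preserves each codim-1 face of~$\De^{k+1}$ and acts there by~$\vp_k$. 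Hausdorffness of~$\wt M'$ and properness of~$\wt F$ follow from the same arguments as in Lemma~\ref{constr1_lmm}, using the local finiteness of~$(\wt\si_i)_{i\in\wt\cI}$.

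\noindent
By parts~(ii) and~(v) of Lemma~\ref{gluing_l2}, the codimension-1 faces of the $\De^{k+1}$ summands in~$\wt M'$ that remain unpaired are in bijection with $\cI_0\!\sqcup\!\cI_1$ via $(\wt\io_r,\wt p_r)$. For $r\!\in\!\{0,1\}$ and $i\!\in\!\cI_r$, the restriction of~$\wt F$ to the face $\{\wt\io_r(i)\}\!\times\!\io_{k+1;\wt p_r(i)}(\De^k)$ equals $\wt\si_{\wt\io_r(i)}\!\circ\!\io_{k+1;\wt p_r(i)}\!\circ\!\vp_k$, again by the third equation in~\eqref{bdpush_e2}. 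Precomposing with the affine identification $\io_{k+1;\wt p_r(i)}\!\circ\!\wt\tau_{(r,i)}\!:\!\De^k\!\lra\!\io_{k+1;\wt p_r(i)}(\De^k)$ and using~\eqref{psi_e3} together with the second equation in~\eqref{bdpush_e1}, this becomes $\si_{r;i}\!\circ\!\vp_k$, which is precisely the $i$-th defining map of $F_r\!:\!M'_r\!\lra\!X$ in the proof of Lemma~\ref{constr1_lmm}. The sign conditions in~\eqref{psi_e1b}, \eqref{psi_e2b}, and~\eqref{psi_e3} guarantee that these face identifications are compatible with the pairings used to define~$M'_r$ from~$c_r$ and assemble, with the correct boundary orientation, into a homeomorphism from the union of the unpaired codim-1 faces of~$\wt M'$ onto $M'_1\!\sqcup\!-M'_0$ intertwining~$\wt F$ with~$F_0\!\sqcup\!F_1$.

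\noindent
Finally, let $\wt Y_{\wt c}\!\subset\!\wt M'$ be the image under the quotient map of the $(k\!-\!1)$-skeleton of $\bigsqcup_i\{i\}\!\times\!\De^{k+1}$ and set $\wt M\!=\!\wt M'\!-\!\wt Y_{\wt c}$. The local manifold argument of~(2) in the proof of \cite[Lemma~3.2]{Z}, transferred one dimension up, shows that $\wt M$ is an oriented smooth $(k\!+\!1)$-manifold with boundary and $\wt F|_{\wt M}$ is smooth. Under the identification of the previous paragraph, define $Y_r\!=\!M_r\!\cap\!\wt Y_{\wt c}$; since $Y_r$ is contained in the image of a $(k\!-\!1)$-skeleton, $\dim Y_r\!\le\!k\!-\!1$. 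Then $\prt\wt M\!=\!(M_1\!-\!Y_1)\!\sqcup\!-(M_0\!-\!Y_0)$ and $\wt F|_{M_r-Y_r}\!=\!F_r|_{M_r-Y_r}$. Properness of~$\wt F$ together with closedness of~$\wt Y_{\wt c}$ gives $\Bd\wt F|_{\wt M}\!\subset\!\wt F(\wt Y_{\wt c})$, which has dimension at most~$k\!-\!1$ because $\wt\vp_{k+1}$ preserves skeletal strata (by~\eqref{bdpush_e2}) and each $\wt\si_i$ is smooth on the interior of each simplex of~$\De^{k+1}$. The main obstacle will be the sign bookkeeping in the second paragraph: verifying that the signs $-(-1)^{p_1+p_2}$ and $-(-1)^{r+\wt p_r(i)}$ dictated by~\eqref{psi_e2b} and~\eqref{psi_e3} match precisely the boundary orientations of the codim-1 faces of~$\wt M'$ inherited from the ambient oriented $(k\!+\!1)$-simplices with the orientations of $M'_1\!\sqcup\!-M'_0$ coming from~\eqref{psi_e1b} applied to~$c_0$ and~$c_1$.
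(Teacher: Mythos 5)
Your construction follows the same route as the paper (glue the $(k\!+\!1)$-simplices by the data of Lemma~\ref{gluing_l2}, remove the $(k\!-\!1)$-skeleton, match the unpaired faces with $M_0,M_1$), but there is a genuine gap in your definition of $\wt F$: you compose only with $\wt\vp_{k+1}$, whereas the paper sets $\ti F([i,t])=\ti\si_i\big(\ti\vp_{k+1}(\vp_{k+1}(t))\big)$, precomposing also with the codimension-one collapsing map $\vp_{k+1}$ of $\De^{k+1}$. The map $\wt\vp_{k+1}$ collapses only the neighborhoods $U^{k+1}_{p,q}$ of the codimension-two faces (first equation in~\e_ref{bdpush_e2}); it does nothing special near the interior of a codimension-one face. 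So at an interior wall of $\wt M'$, i.e.\ the image of the interior of a face along which the simplices $\wt\si_{i_1}$ and $\wt\si_{i_2}$ are glued, your map is $\wt\si_{i_1}\!\circ\!\wt\vp_{k+1}$ on one side and $\wt\si_{i_2}\!\circ\!\wt\vp_{k+1}$ on the other; these agree on the wall, but their normal derivatives need not, so $\wt F|_{\wt M}$ is in general only continuous there, while Definition~\ref{BMpseudo_dfn}\ref{BMequiv_it} requires a smooth map. The local argument of \cite[Lemmas~3.2,~3.3]{Z} that you invoke ``one dimension up'' works precisely because the map factors through the projection $\wt\pi^{k+1}_p$ on the neighborhood $U^{k+1}_p$ of each wall, which is what the extra factor $\vp_{k+1}$ (the first equation in~\e_ref{bdpush_e1} with $k$ replaced by $k\!+\!1$) supplies. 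Since $\vp_{k+1}=\id$ on $\prt\De^{k+1}$, inserting this factor does not disturb your well-definedness computation or the boundary matching: one obtains exactly the paper's identities~\e_ref{pertmaps_e2} and~\e_ref{pertmaps_e3}, and the restriction to an unpaired face, read through $\io_{k+1;\wt p_r(i)}\!\circ\!\wt\tau_{(r,i)}$, is still $\si_{r;i}\!\circ\!\vp_k$, i.e.\ the defining map of~$F_r$. So the fix is precisely the paper's composite map, not a new idea, but as written your $\wt F$ need not be a pseudocycle equivalence.

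Two smaller points. The sign bookkeeping you single out as the main obstacle is already packaged in Lemma~\ref{gluing_l2} together with the local orientation argument of~\cite{Z}; the missing $\vp_{k+1}$ factor is the real issue. Also, your argument implicitly assumes $k\!\ge\!1$: Lemma~\ref{bdpush_lmm} requires $k\!\in\!\Z^+$ and the $\cS_{k-1}$-valued gluing data degenerates for $k\!=\!0$, so the case $k\!=\!0$ needs the separate direct argument the paper gives.
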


\begin{proof}
If $k\!=\!0$, there are subsets $\D_{\wt{c}}\!\subset\!\wt\cI\!\times\!\wt\cI$ and 
$\cI_{\wt{c}}^{(0)},\cI_{\wt{c}}^{(1)}\!\subset\!\wt\cI$ and bijections
$$\wt\io_r\!:\cI_r\lra\cI_{\wt{c}}^{(r)}, \qquad r\!=\!0,1,$$
such that the projections
$$\D_{\wt{c}}\lra \wt\cI\!-\!\cI_{\wt{c}}^{(1)} \quad\hbox{and}\quad
\D_{\wt{c}}\lra \wt\cI\!-\!\cI_{\wt{c}}^{(0)}$$
on the first and second component, respectively, are bijections,
$$\wt\si_{i_1}(1)=\wt\si_{i_2}(0) ~~\forall~(i_1,i_2)\!\in\!\D_{\wt{c}},
\qquad
\wt\si_{\wt\io_r(i)}(r)=\si_{r;i}(0)~~\forall~i\!\in\!\cI_r,\,r\!=\!0,1.$$
The space 
$$\wt{M}=\Big(\bigsqcup_{i\in\wt\cI}\{i\}\!\times\!\De^1\Big)\!\!\Big/\!\!\!\sim,
\quad\hbox{where}\quad 
(i_1,1)\sim(i_2,0)
~\forall~(i_1,i_2)\!\in\!\D_{\ti{c}},$$
is then an oriented one-dimensional manifold with boundary
$\prt\wt{M}\!=\!M_1\!-\!M_0$.
Similarly to the proof of Lemma~\ref{constr1_lmm}, the map
$$\wt{F}\!:\wt{M}\lra X, \qquad \wt{F}\big([i,t]\big)=\wt\si_i\big(\vp_1(t)\!\big),$$
is well-defined, continuous, proper, and smooth.
Since $\wt{F}|_{M_r}\!=\!F_r$, $\wt{F}$ is a pseudocycle equivalence 
between~$F_0\!=\!F_0|_{M_0}$ and~$F_1\!=\!F_1|_{M_1}$.\\

\noindent
Suppose $k\!\ge\!1$. Let 
$$\D_{\ti{c}}\subset\cB_{\wt{c}}\!\times\!\cB_{\wt{c}}, \quad
\cB_{\ti{c}}^{(0)},\cB_{\ti{c}}^{(1)}\!\subset\!\cB_{\ti{c}}, \quad
\ti\tau\!: \D_{\ti{c}}\lra\cS_k, \quad
(\ti\io_r,\ti{p}_r)\!:\cI_r\lra\cB_{\ti{c}}^{(r)}, \quad
\ti\tau_r\!:\cI_r\lra\cS_k$$
be the subsets and maps corresponding to~$\wt{c}$ as in 
Lemmas~\ref{gluing_l2}.
As detailed in \cite[Section~3.1]{Z},
$\vp_{k+1}\!=\!\id$ on $\prt\De^{k+1}$,
the third equation in~\e_ref{bdpush_e2}, the second equation in~\e_ref{bdpush_e1},
and the first equation in~\e_ref{psi_e3} give
\BE{pertmaps_e2}
\ti\si_{\ti\io_r(i)}\!\circ\!\ti\vp_{k+1}\!\circ\!\vp_{k+1}\!\circ\!
\io_{k+1;\ti{p}_r(i)}\!\circ\!\ti\tau_{(r,i)}=\si_{r;i}\!\circ\!\vp_k
\qquad\forall~i\!\in\!\cI_r,~r\!=\!0,1.\EE
Furthermore, $\vp_{k+1}\!=\!\id$ on $\prt\De^{k+1}$,
the third equation in~\e_ref{bdpush_e2} used twice, 
the second equation in~\e_ref{bdpush_e1}, and
the second equation in~\e_ref{psi_e2a} give
\BE{pertmaps_e3}
\ti\si_{i_1}\!\circ\!\ti\vp_{k+1}\!\circ\!\vp_{k+1}
\!\circ\!\io_{k+1;p_1}\!\circ\!\ti\tau_{(i_1,p_1),(i_2,p_2)}
=\ti\si_{i_2}\!\circ\!\ti\vp_{k+1}\!\circ\!\vp_{k+1}\!\circ\!\io_{k+1;p_2}
~~\forall\,(\!(i_1,p_1),(i_2,p_2)\!)\!\in\!\D_{\ti{c}}.\EE

\vspace{.2in}

\noindent
Define 
\begin{gather*}
\wt{M}'=\Big(\bigsqcup_{i\in\wt\cI}\{i\}\!\times\!\De^{k+1}\Big)\!\!\Big/\!\!\!\sim,
\qquad\hbox{where}\\ 
\big(i_1,\io_{k+1;p_1}(\ti\tau_{(i_1,p_1),(i_2,p_2)}(t)\!)\!\big)
\sim\big(i_2,\io_{k+1;p_2}(t)\!\big)
\quad\forall~
\big(\!(i_1,p_1),(i_2,p_2)\!\big)\!\in\!\ti\D_{\ti{c}},~t\!\in\!\De^k.
\end{gather*}
Let $\ti\pi$ be the quotient map and
$$\ti{F}\!:\ti{M}'\lra X, \qquad
\ti{F}\big([i,t]\big)=\ti\si_i\big(\ti\vp_{k+1}(\vp_{k+1}(t)\!)\!\big)
 ~~\forall~i\!\in\!\wt\cI,\,t\!\in\!\De^{k+1}.$$
This map is well-defined by \e_ref{pertmaps_e3} 
and is continuous by the universal property of the quotient topology.
By the same reasoning as in the proof of Lemma~\ref{constr1_lmm}, 
$\ti{M}$ is a second countable, Hausdorff topological space,
$\wt\pi$ is a closed map, and $\wt{F}$ is a proper~map.\\

\noindent
With  $\ti{Y}\!\subset\!\De^{k+1}$ denoting the $(k\!-\!1)$-skeleton,
let \hbox{$\ti{M}\!\subset\!\ti{M}'$} be the complement of the~subset
$$Y_{\wt{c}}\equiv \ti\pi\Big(\bigsqcup_{i\in\wt\cI}\{i\}\!\times\!\ti{Y}\Big)\subset\wt{M}'.$$
Since $\wt{M}'$ is Hausdorff, $Y_{\wt{c}}\!\subset\!\wt{M}'$ is closed, 
and $\wt{F}$ is a proper~map,
\BE{equiv1_e3}
\Bd \ti{F}|_{\ti{M}}=\ti{F}(Y_{\wt{c}})
= \bigcup_{i\in\wt\cI}\ti\si_i\big(\ti\vp_{k+1}(\vp_{k+1}(\ti{Y})\!)\!\big)
=\bigcup_{i\in\wt\cI}\ti\si_i(\ti{Y}).\EE
Since $\ti\si_i|_{\Int\De'}$ is smooth for all $i\!\in\!\wt\cI$ and 
all simplices $\De'\!\subset\!\De^{k+1}$,
$\Bd\ti{F}|_{\ti{M}}$ has dimension at most $k\!-\!1$ by~\e_ref{equiv1_e3}.\\

\noindent
For $r\!=\!0,1$, let $Y_r\!\subset\!M_r$ denote the union of the images
of the open $(k\!-\!1)$-simplicies of~$\De^k$ under 
the quotient map~$\pi$ in the proof of Lemma~\ref{constr1_lmm}
(this is also the intersection of~$M_r$ with the union of
the images of the closed $(k\!-\!1)$-simplicies of~$\De^k$ under~$\pi$).
The~maps
$$\io_r\!:M_r\!-\!Y_r\lra\wt{M}, \qquad
\io_r\big([i,t]\big)=\big[\wt\io_r(i),\io_{k+1;\wt{p}_r(i)}\big(\wt\tau_{r;i}(t)\!\big)\big]
~~\forall~i\!\in\!\cI_r,\,t\!\in\!\Int\De^k,$$
are well-defined embeddings with disjoint images.
By~\e_ref{pertmaps_e2} and~\e_ref{fmapdfn_e},
$$\wt{F}\!\circ\!\io_r=F|_{M_r-Y_r}\,.$$
Thus, $\ti{F}|_{\ti{M}}$ is a Borel-Moore pseudocycle equivalence between
the Borel-Moore $k$-pseudocycles $F_0|_{M_0}$ and $F_1|_{M_1}$, 
provided $\ti{M}$ is an oriented manifold,  $\ti{F}|_{\ti{M}}$ is a smooth~map,
$\io_0,\io_1$ are smooth embeddings, and
$$\prt\wt{M}=\io_1(M_1\!-\!Y_1)\!\big)\!\sqcup\!-\io_0(M_0\!-\!Y_0).$$
These are straightforward local statements, 
which are established as in~(3) in the proof of \cite[Lemma~3.3]{Z}.
\end{proof}

\subsection{From pseudocycles to integral cycles}
\label{phi_sec}

\noindent
We next adapt the constructions of \cite[Section~3.2]{Z}
from pseudocycles to Borel-Moore pseudocycles and obtain the second homomorphism 
in~\e_ref{mainthm_e}.
As indicated in Section~\ref{outline_subs},
we first define a homology class~$[f]_{X;U}$ of a pseudocycle~$f$ relative
to a nice neighborhood~$U$ provided by Proposition~\ref{neighb2_prp}
and then pull it back to the absolute Borel-Moore homology of the target.
 
\begin{lmm}\label{euler}
Let $X$ be a manifold and $k\!\in\!\Z^{\ge0}$.
Every Borel-Moore $k$-pseudocycle \hbox{$f\!:M\!\lra\!X$} determines 
an element of~$\Hcl_k(X;\Z)$.
\end{lmm}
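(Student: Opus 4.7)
The strategy, outlined in Section~\ref{outline_subs}, is to restrict $f$ to a closed codimension-zero oriented submanifold with boundary $\ov V\!\subset\!M$ on which $f$ becomes proper, push the fundamental class $[\ov V]$ forward into a relative Borel-Moore group $\Hcl_k(X,\{U\};\Z)$ for a carefully chosen neighborhood $U$ of $\Bd f$, and then transfer the result back to $\Hcl_k(X;\Z)$ via an isomorphism furnished by Proposition~\ref{neighb2_prp}.

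First I would construct $U$. Since $X$ is locally compact Hausdorff, the subset $\Bd f\!\subset\!X$ is closed, and by hypothesis it lies in the image of a smooth map $h\!:Y\!\lra\!X$ from a $(k\!-\!2)$-manifold. Applying Lemma~\ref{neighb2_lmm} with $A\!=\!\Bd f$ produces an open neighborhood $U$ of $\Bd f$ with $\Hcl_{\{U\};l}(X;\Z)\!=\!0$ for all $l\!>\!k\!-\!2$. The long exact sequence of the pair $(\Scl_{\{U\};*}(X;\Z),\Scl_*(X;\Z))$ then gives an isomorphism $\Hcl_k(X;\Z)\xlra{\cong}\Hcl_k(X,\{U\};\Z)$. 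By Lemma~\ref{proper_lmm}(1), $f|_{M-f^{-1}(U)}$ is proper, and Lemma~\ref{proper_lmm}\ref{properext_it} upgrades this to an open neighborhood $W$ of $M\!-\!f^{-1}(U)$ on which $f|_{\ov{W}}$ remains proper. A smooth bump function $\rho\!:M\!\lra\![0,1]$ equal to $1$ on $M\!-\!f^{-1}(U)$ and supported in $W$, combined with a regular value $c\!\in\!(0,1)$, yields a closed oriented codimension-zero submanifold with boundary $\ov V\!=\!\rho^{-1}([c,1])$ satisfying
$$M\!-\!f^{-1}(U)\subset\Int\ov V\subset\ov V\subset\ov W, \qquad f(\prt\ov V)\subset U.$$

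With $\ov V$ in hand, the fundamental class construction recalled at the end of Section~\ref{OrientedBM_subs} produces $[\ov V]\!\in\!\Hcl_k(\ov V,\{\prt\ov V\};\Z)$, and the chain-level pushforward of Lemma~\ref{BMpush_lmm} applied to the proper map $f|_{\ov V}$ sends $\Scl_{\{\prt\ov V\};*}(\ov V;\Z)$ into $\Scl_{\{U\};*}(X;\Z)$ because $f(\prt\ov V)\!\subset\!U$. This descends to a homomorphism $\Hcl_k(\ov V,\{\prt\ov V\};\Z)\!\lra\!\Hcl_k(X,\{U\};\Z)$, and I set
$$[f]_{X;U}\equiv f_*[\ov V]\in\Hcl_k(X,\{U\};\Z).$$
I define $[f]\!\in\!\Hcl_k(X;\Z)$ to be the unique preimage of $[f]_{X;U}$ under the isomorphism from the preceding paragraph.

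The main obstacle is verifying that $[f]$ is independent of the choices of $U$ and $\ov V$. For two candidate submanifolds $\ov V_1,\ov V_2$ sharing the same $U$, I form a common enlargement $\ov V_3\!\supset\!\ov V_1\!\cup\!\ov V_2$ via the same bump-function recipe and represent each fundamental class by a locally finite triangulation in the oriented chain complex $\ovScl_*$ of Section~\ref{OrientedBM_subs}. For $i\!=\!1,2$, the difference $f_\#\{c_3\}\!-\!f_\#\{c_i\}$ is a chain whose simplices all map into $f(\ov V_3\!-\!\Int\ov V_i)\!\subset\!U$, so it vanishes in $\ovHcl_k(X,\{U\};\Z)\!\cong\!\Hcl_k(X,\{U\};\Z)$ by Proposition~\ref{isom1_prp}. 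For two admissible neighborhoods $U_1,U_2$, I pass to a common refinement $U_3\!\subset\!U_1\!\cap\!U_2$ still satisfying the vanishing of Proposition~\ref{neighb2_prp}, select a single $\ov V$ that works for $U_3$, and invoke the naturality of both the pushforward and the long exact sequence to conclude that the three lifts in $\Hcl_k(X;\Z)$ coincide.
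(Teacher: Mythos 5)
Your proposal is correct and follows essentially the same route as the paper: choose $U$ via Proposition~\ref{neighb2_prp} so that $\Hcl_{\{U\};l}(X;\Z)$ vanishes for $l>k\!-\!2$, pick $\ov{V}\supset M\!-\!f^{-1}(U)$ with $f|_{\ov{V}}$ proper, push forward its fundamental class to the relative group, and check independence of $\ov{V}$ and $U$ by common enlargements/refinements and compatible triangulations, exactly as in the paper's proof of Lemma~\ref{euler}. The only (cosmetic) difference is that the paper phrases the fundamental class and the comparison of representatives in the oriented complex $\ovScl_*$, where the triangulation sum is actually a cycle, while you work in $\Hcl_*$ and invoke the identification of Proposition~\ref{isom1_prp}; your explicit bump-function construction of $\ov{V}$ via Lemma~\ref{proper_lmm}\ref{properext_it} just makes precise what the paper asserts.
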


\begin{proof}
By Proposition~\ref{neighb2_prp}, there exists an open neighborhood~$U\!\subset\!X$
of $\Bd f$ such~that 
$$\ovHcl_{\{U\};l}(X;\Z)=0 \qquad\forall~ l>k\!-\!2.$$ 
Thus, $f|_{M-f^{-1}(U)}$ is a proper map and the homomorphism
\BE{euler_e2} \ovHcl_k\big(X;\Z\big)\lra  \ovHcl_k\big(X,\{U\};\Z\big)\EE
induced by the quotient is an isomorphism.
Let $V\!\subset\!M$ be an open neighborhood of $M\!-\!f^{-1}(U)$ so that 
$f|_{\ov{V}}$ is still proper and $\ov{V}$ is a manifold with boundary.
This manifold inherits an orientation from~$M$ and thus defines a homology class
$$[\ov{V}]\in \ovHcl_k\big(\ov{V},\{\prt\ov{V}\};\Z\big).$$
Put
\BE{euler_e3} [f]_{X;U}=f_*\big([\ov{V}]\big)\in \ovHcl_k\big(X,\{U\};\Z\big)
\stackrel{\e_ref{euler_e2}}{\cong}\ovHcl_k(X;\Z),\EE
where 
\BE{euler_e4}f_*\!: \ovHcl_k\big(\ov{V},\{\prt\ov{V}\};\Z\big)
\lra \ovHcl_k\big(X,\{U\};\Z\big)\EE
is the homology homomorphism induced by the proper map~$f|_{\ov{V}}$.\\

\noindent 
We now show that $[f]_{X;U}$ doesn't depend on the choices of $V$ or $U$.
Suppose $V'\!\subset\!X$ is an open neighborhood of $\ov{V}$ so that 
$f|_{\ov{V}'}$ is also proper and $\ov{V}'$ is a manifold with boundary.
Choose a triangulation of~$\ov{V}'$ extending some triangulation
of $(\prt\ov{V})\!\cup\!(\prt\ov{V}')$;
such a triangulation exists by \cite[Section~16]{Mu2}. 
Since $f(\ov{V}'\!-\!V)\!\subset\!U$, the classes
$$f_*\big([\ov{V}]\big),f_*\big([\ov{V}']\big)\in \ovHcl_k\big(X,\{U\};\Z\big)$$
are represented by cycles that differ by singular simplices lying in~$U$;
see the last paragraph of Section~\ref{OrientedBM_subs}.
It follows that
$$f_*\big([\ov{V}]\big)=f_*\big([\ov{V}']\big)\in \ovHcl_k\big(X,\{U\};\Z\big).$$
Thus, the homology class $[f]_{X;U}$ is independent of the choice of~$V$.\\

\noindent
Suppose $U'\!\subset\!U$ is an another open neighborhood of $\Bd f$.
By the previous paragraph, we can choose~$V$ for~$U$
and~$V'$ for~$U'$ to be the~same. 
Since the isomorphism~\e_ref{euler_e2} is the composition of the isomorphisms
$$\ovHcl_k(X;\Z)\lra \ovHcl_k\big(X,\{U'\};\Z\big)\lra \ovHcl_k\big(X,\{U\};\Z\big)$$ 
induced by inclusions and the homomorphism~\e_ref{euler_e4} is the composition
$$\ovHcl_k\big(\ov{V},\{\prt\ov{V}\};\Z\big)\lra \ovHcl_k\big(X,\{U'\};\Z\big)
\lra \ovHcl_k\big(X,\{U\};\Z\big),$$ 
the homology classes in $\ovHcl_k(X;\Z)$ corresponding to $[f]_{X;U'}$ and  $[f]_{X;U}$ 
are the~same.
Thus, the homology class~$[f]$ in~$\ovHcl_k(X;\Z)$ corresponding to~$[f]_{X;U}$
under the isomorphism~\e_ref{euler_e2} is independent of the choice of~$U$ as~well. 
\end{proof}

\begin{lmm}\label{equiv2_l}
Let $X$ be a manifold and $k\!\in\!\Z^{\ge0}$.
If Borel-Moore $k$-pseudocycles \hbox{$f_0\!:M_0\!\lra\!X$} and 
\hbox{$f_1\!:M_1\!\lra\!X$} are equivalent, then
$$[f_0]=[f_1]\in\Hcl_k(X;\Z).$$
\end{lmm}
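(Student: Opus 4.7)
The plan is to reduce the equality to one in a relative Borel-Moore group $\Hcl_k(X,\{W\};\Z)$ into which $\Hcl_k(X;\Z)$ injects, and to realize the given pseudocycle equivalence as an explicit bounding chain between cycles in $\ovScl_{k+1}(X,\{W\};\Z)$ representing $[f_0]_{X;W}$ and $[f_1]_{X;W}$.

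First I would invoke Remark~\ref{McDuff_rmk} to replace $\wt f$ by an equivalent equivalence with $Y_0\!=\!Y_1\!=\!\eset$, $\prt\wt M\!=\!M_1\!\sqcup\!(-M_0)$, and $\wt f|_{M_r}\!=\!f_r$. An elementary use of the definition of $\Bd$ --- namely, that any compact $\wt K\!\subset\!\wt M$ restricts to a compact $K\!=\!\wt K\!\cap\!M_r\!\subset\!M_r$ with $\ov{f_r(M_r\!-\!K)}\!\subset\!\ov{\wt f(\wt M\!-\!\wt K)}$ --- then gives $\Bd f_r\!\subset\!\Bd\wt f$ for $r\!=\!0,1$. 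Since $\dim\Bd\wt f\!\le\!k\!-\!1$, Proposition~\ref{neighb2_prp} supplies an open neighborhood $W\!\subset\!X$ of $\Bd\wt f$ with $\Hcl_{\{W\};l}(X;\Z)\!=\!0$ for all $l\!\ge\!k$. The long exact sequence of the pair then implies that $\Hcl_k(X;\Z)\!\lra\!\Hcl_k(X,\{W\};\Z)$ is injective. A second application of Proposition~\ref{neighb2_prp} provides smaller neighborhoods $U_r\!\subset\!W$ of $\Bd f_r$ with $\Hcl_{\{U_r\};l}(X;\Z)\!=\!0$ for $l\!>\!k\!-\!2$; by the independence of the choice of $V$ in the construction of Lemma~\ref{euler}, the image of $[f_r]$ under the above injection is exactly $[f_r]_{X;W}$.

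It therefore suffices to show $[f_0]_{X;W}\!=\![f_1]_{X;W}\!\in\!\Hcl_k(X,\{W\};\Z)$. By Lemma~\ref{proper_lmm}(1) the restriction $\wt f|_{\wt M-\wt f^{-1}(W)}$ is proper, and Lemma~\ref{proper_lmm}\ref{properext_it} yields an open neighborhood $\wt V\!\subset\!\wt M$ of $\wt M\!-\!\wt f^{-1}(W)$ such that $\wt f|_{\ov{\wt V}}$ is proper; shrinking $\wt V$ if necessary, I may additionally assume $\ov{\wt V}$ is a $(k\!+\!1)$-manifold with corners whose boundary decomposes as
$$\prt\ov{\wt V}=\ov{V_0}\!\cup\!\ov{V_1}\!\cup\!\ov{V_{\mathrm{int}}},\qquad
\ov{V_r}\equiv\ov{\wt V}\!\cap\!M_r,\qquad \wt f\big(\ov{V_{\mathrm{int}}}\big)\!\subset\!W.$$
By the construction in Lemma~\ref{euler}, $f_r|_{\ov{V_r}}$ represents $[f_r]_{X;W}$.

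Finally, I would choose a smooth triangulation of $\ov{\wt V}$ restricting to triangulations of the three boundary pieces and form the associated locally finite chains $\wt c\!\in\!\Scl_{k+1}(X;\Z)$ on $\ov{\wt V}$ and $c_0,c_1,c_{\mathrm{int}}$ on the boundary pieces, as at the end of Section~\ref{OrientedBM_subs}. The standard observation that each interior codimension-one face of the triangulation is shared by exactly two top simplices with opposite orientations, interpreted in $\ovScl$ where any simplex agrees up to sign with any permutation of its vertices, yields
$$\ov\prt_X\{\wt c\}=\{c_1\}-\{c_0\}\pm\{c_{\mathrm{int}}\}\in\ovScl_k(X;\Z).$$
Since $\supp c_{\mathrm{int}}\!\subset\!\wt f(\ov{V_{\mathrm{int}}})\!\subset\!W$, the chain $\{c_{\mathrm{int}}\}$ lies in $\ovScl_{\{W\};k}(X;\Z)$ and thus vanishes in the quotient $\ovScl_k(X,\{W\};\Z)$. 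Hence $\{c_0\}$ and $\{c_1\}$ determine the same class in $\ovHcl_k(X,\{W\};\Z)\!\approx\!\Hcl_k(X,\{W\};\Z)$ (by Proposition~\ref{isom1_prp}), giving $[f_0]_{X;W}\!=\![f_1]_{X;W}$ and therefore $[f_0]\!=\![f_1]$ in $\Hcl_k(X;\Z)$. The main obstacle I expect is the geometric setup of $\ov{\wt V}$ --- arranging a manifold-with-corners structure with the prescribed three-piece boundary decomposition and a compatible smooth triangulation --- together with careful orientation bookkeeping needed to identify the image of $[f_r]$ in $\Hcl_k(X,\{W\};\Z)$ with $[f_r]_{X;W}$.
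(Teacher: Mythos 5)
Your proposal is correct and follows essentially the same route as the paper's proof: remove $Y_0,Y_1$ via Remark~\ref{McDuff_rmk}, use Proposition~\ref{neighb2_prp} to get a neighborhood of $\Bd\wt{f}$ for which $\Hcl_k(X;\Z)$ injects into the relative group, use Lemma~\ref{proper_lmm}\ref{properext_it} to obtain a proper restriction of $\wt{f}$ to a triangulated submanifold with corners, and compare boundary chains modulo chains supported in that neighborhood. The only (harmless) deviation is that you take $\ov{V}_r=\ov{\wt{V}}\cap M_r$ and identify its class with $[f_r]_{X;W}$ via the independence-of-$V$ argument, whereas the paper builds pre-chosen $\ov{V}_0,\ov{V}_1$ (adapted to smaller neighborhoods $U_r$ of $\Bd f_r$) into the set to which Lemma~\ref{proper_lmm}\ref{properext_it} is applied, so the boundary pieces are literally the representatives of $[f_r]_{X;U_r}$.
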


\begin{proof}
Let $\wt{f}\!: \wt{M}\!\lra\!X$ be a Borel-Moore pseudocycle equivalence  
between~$f_0$ and~$f_1$ as in Definition~\ref{BMpseudo_dfn}\ref{BMequiv_it}.
By Remark~\ref{McDuff_rmk}, we can assume that $Y_0,Y_1\!=\!\eset$.
By Proposition~\ref{neighb2_prp}, there exists an open neighborhood~$\wt{U}\!\subset\!X$
of $\Bd\wt{f}$ such~that 
$$\ovHcl_{\{\wt{U}\};l}(X;\Z)=0 \qquad\forall~ l>k\!-\!1.$$ 
Thus, $\wt{f}|_{\wt{M}-\wt{f}^{-1}(\wt{U})}$ is a proper map and the homomorphism
\BE{equiveuler_e2} \ovHcl_k\big(X;\Z\big)\lra  \ovHcl_k\big(X,\{\wt{U}\};\Z\big)\EE
induced by inclusion is injective.\\

\noindent
For $r\!=\!0,1$, let $U_r\!\subset\!\wt{U}$
be an open neighborhood of 
$\Bd f_r\!\subset\!\Bd\wt{f}$ such~that 
\BE{equiveuler_e4}\ovHcl_{\{U_r\};l}(X;\Bbb{Z})=0 \qquad\forall~l>k\!-\!2.\EE
Let $V_r\!\subset\!M_r$ be a choice of an open subset for $(f_r,U_r)$
as in  the proof of Lemma~\ref{euler}.
Since the restriction of~$\wt{f}$ to the closed subset
$$B\equiv \big(\wt{M}\!-\!\wt{f}^{-1}\big(\wt{U}\big)\big)\!\cup\!\ov{V}_0\!\cup\!\ov{V}_1\subset\wt{M}$$ 
is proper, Lemma~\ref{proper_lmm}\ref{properext_it} implies that 
there exists a neighborhood~$W\!\subset\!\wt{M}$ of~$B$
so that $\wt{f}|_{\ov{W}}$ is still proper and 
$\ov{W}$ is a manifold with boundary and corners
(with the corners contained in \hbox{$\prt\wt{M}\!-\!\ov{V}_0-\!\ov{V}_1$)}.
We note~that
\BE{equiv2_e5}\wt{f}\big(\prt\ov{W}\!-\!\big(V_0\!\cup\!V_1\big)\big)
=\wt{f}\big((\ov{W}\!-\!W)\!\cup\!(W\!\cap\!(M_0\!\cup\!M_1)\!)\!-\!V_0\!\cup\!V_1\big)
\subset\wt{U}\!\cup\!U_0\!\cup\!U_1=\wt{U}.\EE
For $r\!=\!0,1$, let
\begin{gather*}
\io_{X;r*}\!:\Hcl_k\big(X,\{U_r\};\Z\big)\lra\Hcl_k\big(X,\{\wt{U}\};\Z\big)
\qquad\hbox{and}\\
\io_{\wt{M};r*}\!: \Hcl_k\big(\ov{V}_r,\{\prt\ov{V}_r\};\Z\big)
\lra \Hcl_k\big(\ov{W},\{\prt\ov{W}\!-\!V_0\!\cup\!V_1\};\Z\big) 
\end{gather*}
be the homomorphisms induced by inclusions.\\

\noindent
Choose a triangulation $T\!=\!(K,\eta)$ of
$\ov{W}$ that restricts to triangulations of 
$\ov{V}_0,\prt\ov{V}_0,\ov{V}_1,\prt\ov{V}_1$ and~$\prt\ov{W}$.
Let
$$K^{\top}=\big\{\si\!\in\!K\!:\dim\si\!=\!k\!+\!1\big\}.$$
For $r\!=\!0,1$, put
$$K_r=\big\{\si\!\in\!K\!:\eta(\si)\!\subset\!\ov{V}_r\big\}, \qquad
K_r^{\top}=\big\{\si\!\in\!K_r\!:\dim\si\!=\!k\big\}.$$
For each $\si\!\in\!K^{\top}$ and $\si\!\in\!K_r^{\top}$, let 
$$l_{\si}\!:\De^{k+1}\lra\si\subset|K| \qquad\hbox{and}\qquad 
l_{\si}\!:\De^k\lra\si\subset|K_r|,$$
respectively, be as in~\e_ref{lsidfn_e}.
By our assumptions,
$$\prt\!\!\!\!\sum_{\si\in K^{\top}}\!\!\!\!\!\big\{\eta\!\circ\!l_{\si}\big\}
+\sum_{r=0,1}\!(-1)^r\!\!\!\!\!\sum_{\si\in K_r^{\top}}\!\!\!\!\!\big\{\eta\!\circ\!l_{\si}\big\}
\in \Scl_{\{\prt\ov{W}-V_0\cup V_1\};k}\big(\wt{M};\Z\big).$$
Along with~\e_ref{equiv2_e5}, this gives
\BE{equiv2_e9}
\prt\!\!\!\!\!\sum_{\si\in K^{\top}}\!\!\!\!\big\{\wt{f}\!\circ\!\eta\!\circ\!l_{\si}\big\}
=\sum_{\si\in K_1^{\top}}\!\!\!\!\!\big\{f_1\!\circ\!\eta\!\circ\!l_{\si}\big\}
-\sum_{\si\in K_0^{\top}}\!\!\!\!\!\big\{f_0\!\circ\!\eta\!\circ\!l_{\si}\big\}
\in \ovScl_l\big(X,\{\wt{U}\};\Z\big).\EE

\vspace{.2in}

\noindent
For $r\!=\!0,1$, let $[f_r]_{X;U_r}\!\in\!\Hcl_k(X,\{U_r\};\Z)$ be as in the proof
of Lemma~\ref{euler} and
$$[f_r]_{X;\wt{U}}=\io_{X;r*}\big([f_r]_{X;U_r}\big)\in \Hcl_k\big(X,\ti{U};\Z\big).$$
Since the diagram
$$\xymatrix{ \Hcl_k(\ov{V}_r,\{\prt\ov{V}_r\};\Z) \ar[rr]^{\{\ti{f}|_{\ov{V_r}}\}_*}
\ar[d]_{\io_{\wt{V};r*}}&&
\Hcl_k(X,\{U_r\};\Z)\ar[d]^{\io_{X;r*}}\\
\Hcl_k(\ov{W},\{\prt\ov{W}\!-\!V_0\!\cup\!V_1\};\Z) 
\ar[rr]^<<<<<<<<<{\{\ti{f}|_{\ov{W}}\}_*}&&\Hcl_k(X,\{\wt{U}\};\Z)}$$
commutes,
$$[f_r]_{X;\wt{U}}=\big\{\ti{f}|_{\ov{W}}\big\}_*
\big(\io_{X;r*}([\ov{V}_r])\!\big)\in\Hcl_k\big(X,\ti{U};\Z\big).$$
By the last paragraph of Section~\ref{OrientedBM_subs},
the first term and the second term on the right-hand side of~\e_ref{equiv2_e9}
represent~$[f_1]_{X;\wt{U}}$ and~$[f_0]_{X;\wt{U}}$, respectively.
Thus,  
$$\io_{X;0*}\big([f_0]_{X;U_0}\big)=\io_{X;1*}\big([f_1]_{X;U_1}\big)
\in \Hcl_k\big(X,\ti{U};\Z\big).$$
Since the diagram 
$$\xymatrix{ \Hcl_k(X;\Z) \ar[rr]^{\e_ref{equiveuler_e4}}_{\cong}
\ar[d]_{\e_ref{equiveuler_e4}}^{\cong} \ar[rrd]|{\e_ref{equiveuler_e2}}&& 
\Hcl_k(X,\{U_0\};\Z)\ar[d]^{\io_{X;0*}}\\
\Hcl_k(X,\{U_1\};\Z) 
\ar[rr]^<<<<<<<<<{\{\io_{X;1*}\}_*}&&\Hcl_k(X,\{\wt{U}\};\Z)}$$
of homomorphisms induced by inclusions commutes and the diagonal homomorphism is injective,
the classes $[f_0],[f_1]\!\in\!\Hcl_k(X;\Z)$ corresponding to 
$[f_0]_{X;U_0}$ and $[f_1]_{X;U_1}$ are the same.
\end{proof}

\subsection{Isomorphisms of homology theories}
\label{isom_sec}

\noindent
In order to establish that the homomorphisms of Theorem~\ref{main_thm} as
constructed in Section~\ref{psi_sec} and~\ref{phi_sec} are isomorphisms
and mutual inverses, we first show~that
$$\Phi_*\!\circ\!\Psi_*\!=\!\id:  \Hcl_*(X;\Z)\lra \Hcl_*(X;\Z).$$
We then show that the homomorphism~$\Phi_*$ is injective.

\begin{lmm}\label{isom_l1}
Let $X$ be a manifold and $k\!\in\!\Z^{\ge0}$.
Suppose $c$ is an integer locally finite singular $k$-chain~$c$ as in~\e_ref{Zsingch_e}
with $\si_i\!\in\!C^{\i}(\De^k;X)$ for all $i\!\in\!\cI$ representing a cycle
in~$\ovScl_k(X;\Z)$ and $(M',M,F)$ is the triple corresponding to~$c$ via the construction 
of Lemma~\ref{constr1_lmm}.
The homology class~$[F|_M]$ obtained via the construction of Lemma~\ref{euler} then
satisfies
\BE{isoml1_e1}\big[F|_M\big]=[c]\in\Hcl_k(X;\Z).\EE
\end{lmm}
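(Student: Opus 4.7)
The plan is to prove \eqref{isoml1_e1} by matching $[F|_M]$ and $[c]$ after pushing into $\ovHcl_k(X,\{U\};\Z)$, where $U\!\subset\!X$ is the open neighborhood of $\Bd F|_M$ used in the construction of $[F|_M]$ in Lemma~\ref{euler}. Via the isomorphism~\eqref{euler_e2} and Proposition~\ref{isom1_prp}, the statement reduces~to
$$\{c\}\;=\;F_*\big([\ov V\,]\big)\;\in\;\ovHcl_k\big(X,\{U\};\Z\big),$$
where $\ov V\!\subset\!M$ is the auxiliary submanifold with boundary chosen in Lemma~\ref{euler}'s proof.

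The key technical step is to relate $\{c\}$ and the chain $\{c'\}\!:=\!\{\sum_{i\in\cI}\si_i\!\circ\!\vp_k\}$ in $\ovHcl_k(X;\Z)$. The identity $\vp_k|_{U^k_p}\!=\!\wt\pi^k_p|_{U^k_p}$ from \eqref{bdpush_e1} together with $\wt\pi^k_p|_{\Int\De^k_p}\!=\!\id$ and continuity forces $\vp_k|_{\prt\De^k}\!=\!\id$. Hence the straight-line homotopy $H_t(x)\!=\!(1\!-\!t)x\!+\!t\vp_k(x)$ from $\id_{\De^k}$ to $\vp_k$ is constant on $\prt\De^k$; its prism operator $P\!:\!S_*(\De^k;\Z)\!\to\!S_{*+1}(\De^k;\Z)$ satisfies $\prt P\!+\!P\prt\!=\!\vp_{k\#}\!-\!\id$, and each $P(\io_{k;p})$ is a sum of degenerate $k$-simplices supported in $\io_{k;p}(\De^{k-1})\!=\!\De^k_p$. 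Define the locally finite chain
$$B\;=\;\sum_{i\in\cI}\si_{i\#}P(\id_{\De^k})\;\in\;\Scl_{k+1}(X;\Z),$$
which makes sense because $\supp\si_{i\#}P(\id_{\De^k})\!\subset\!\si_i(\De^k)$ and $(\si_i)_{i\in\cI}$ is locally finite. Its boundary is
$$\prt B\;=\;c'\!-\!c\!-\!R,\qquad R\!:=\!\!\sum_{(i,p)\in\cB_c}\!\!(-1)^p\si_{i\#}P(\io_{k;p}),$$
and the paired contributions to $R$ indexed by $((i_1,p_1),(i_2,p_2))\!\in\!\D_c$ combine via the identity $\si_{i_1}\!\circ\!\io_{k;p_1}\!\circ\!\tau_{(i_1,p_1),(i_2,p_2)}\!=\!\si_{i_2}\!\circ\!\io_{k;p_2}$ and the sign \eqref{psi_e1b} into elements of $\prScl_k(X;\Z)$. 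Consequently $\{R\}\!=\!0$ in $\ovScl_k(X;\Z)$ and $\{c'\}\!=\!\{c\}\!+\!\ov\prt_X\!\{B\}$, yielding $\{c'\}\!=\!\{c\}$ in $\ovHcl_k(X;\Z)$.

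Next, $\{c'\}$ represents $F_*([\ov V])$ in $\ovHcl_k(X,\{U\};\Z)$. The parameterizations $L_i\!:\!\De^k\!\to\!M'$ defined by $L_i(t)\!=\![i,t]$ form a locally finite tuple whose pairwise boundary identifications from \eqref{Mprdfn_e} mirror those of $\si_i\!\circ\!\io_{k;p}$ in Lemma~\ref{gluing_l1}, so $\{\sum_{i\in\cI}L_i\}$ is a cycle in $\ovScl_k(M',\{Y_c\};\Z)$ representing the extension of the fundamental class $[M]$ from Proposition~\ref{FC_prp}\ref{mnfdFC_it}. Since $F\!\circ\!L_i\!=\!\si_i\!\circ\!\vp_k$ by \eqref{fmapdfn_e} and $F(Y_c)\!=\!\Bd F|_M\!\subset\!U$, the pushforward equals $\{c'\}\!\in\!\ovHcl_k(X,\{U\};\Z)$. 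On the other hand, $[\ov V]$ also represents (the restriction of) $[M]$ in $\ovHcl_k(\ov V,\{\prt\ov V\};\Z)$, and a comparison of triangulations adapted to the simplicial structure of $M'$ shows $F_*([\ov V])\!=\!F_*\{\sum_i L_i\}$ in $\ovHcl_k(X,\{U\};\Z)$.

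The main obstacle is the cancellation of $\{R\}$ in the second paragraph, which requires checking that the degenerate $k$-simplices produced by $P(\io_{k;p})$ interact with the $\cS_{k-1}$-action of Lemma~\ref{gluing_l1} so that paired contributions land in $\prScl_k(X;\Z)$; this can be verified by direct computation with the explicit prism decomposition of $[0,1]\!\times\!\De^{k-1}$, or equivalently handled by an iterative application of Lemma~\ref{Dhb_lmm} to absorb any residual terms.
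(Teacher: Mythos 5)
Your overall architecture is the paper's: reduce via the injection \eqref{euler_e2} to a comparison in $\ovHcl_k(X,\{U\};\Z)$, show $\{c\}$ is homologous to $\{c'\}=\{\sum_i\si_i\circ\vp_k\}$, and identify $\{c'\}$ with $F_*([\ov{V}])$ modulo chains in~$U$. The problem is that your middle step is left genuinely open. Replacing the identity by $\vp_k$ via the prism operator of the straight-line homotopy produces the residual term $R=\sum_{(i,p)}(-1)^p\si_{i\#}P(\io_{k;p})$, and the claim that the $\D_c$-paired contributions to $R$ lie in $\prScl_k(X;\Z)$ is exactly the nontrivial point: it requires the combinatorial identity expressing $\tau\circ D_j$ (with $D_j$ the degenerate collapse simplices of a stationary prism) as $D_{j'}\circ\pi$ for a permutation $\pi\in\cS_k$ whose sign matches $(-1)^{j+j'}\sign\tau$, consistently with \eqref{psi_e1b}; note also that degenerate simplices are only $2$-torsion in $\ovScl_k(X;\Z)$, not zero, so one cannot wave them away and the signs must be tracked exactly. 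You flag this as the ``main obstacle'' but do not close it, and your proposed fallback does not work as stated: the collection $\si\mapsto\vp_k-\id_{\De^k}$ is \emph{not} a pre-chain map in the sense of \eqref{hbrigid_e4} (since $\vp_{k-1}\neq\id$ on $\De^{k-1}$, the faces do not match), so Lemma~\ref{Dhb_lmm} cannot be invoked to absorb the residue. The paper avoids the whole issue: because $\vp_k|_{\prt\De^k}=\id$ \emph{exactly}, the chain $\vp_k-\id_{\De^k}\in S_k(\De^k;\Z)$ is already a cycle, hence equals $\prt_{\De^k}s_k$ for some $s_k\in S_{k+1}(\De^k;\Z)$, and pushing the constant assignments $\si\mapsto\vp_k$, $\si\mapsto s_k$ through the rigid-map Lemma~\ref{hrigid_lmm} gives the chain-level identity $c'-c=\prt_X\wt\hb_{\#}(c)$ in $\Scl_*(X;\Z)$ with no boundary-face residue at all. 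Adopting this removes the gap.

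Your final step is also only asserted. The equality $F_*([\ov{V}])=F_*\{\sum_iL_i\}$ in $\ovHcl_k(X,\{U\};\Z)$ is precisely where the work lies: $M'$ is not a manifold, so ``the extension of $[M]$'' over $Y_c$ is not supplied by Proposition~\ref{FC_prp}, and the comparison with the class $[\ov{V}]$ used to \emph{define} $[F|_M]$ in Lemma~\ref{euler} must be made explicitly. The paper does this by choosing the triangulation of $\ov{V}$ so that every top simplex lies in some $\pi(\{i\}\!\times\!\De^k)$, completing it inside each $\De^k$ by a complementary triangulation $\wt{K}_i$ whose image under $\si_i\circ\vp_k$ lands in~$U$, and using that subdivision does not change the class in $\ovScl_k$; this yields \eqref{isom_l1e7}, from which the conclusion follows by injectivity of \eqref{euler_e2}. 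Your ``comparison of triangulations adapted to the simplicial structure of $M'$'' gestures at the same argument but needs to be carried out in this form for the proof to be complete.
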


\begin{proof} For $k\!=\!0$, the claim clearly holds on the chain level.
Thus, suppose $k\!\ge\!1$.
Since the self-map $\vp_k$ of Lemma~\ref{bdpush_lmm} restricts to the identity on~$\prt\De^k$,
\BE{isom_l1e2}\vp_k\!-\!\id_k=\prt_{\De^k}s_k\in S_k(\De^k;\Z)\EE
for some $s_k\!\in\!S_{k+1}(\De^k;\Z)$.
Define
\begin{alignat*}{2}
\hb\!:\Hom(\De^k,X)&\lra S_k(\De^k;\Z), &\qquad\hb(\si)&=\vp_k, \\
\wt\hb\!:\Hom(\De^k,X)&\lra S_{k+1}(\De^k;\Z),&\qquad  \wt\hb(\si)&=s_k\,.
\end{alignat*}
By Lemma~\ref{hrigid_lmm} and~\e_ref{isom_l1e2}, the homomorphisms
$$\hb_{\#}\!:\Scl_k(X;\Z)\lra \Scl_k(X;\Z) \qquad\hbox{and}\qquad
\wt\hb_{\#}\!:\Scl_k(X;\Z)\lra \Scl_{k+1}(X;\Z)$$
induced via~\e_ref{hrigid_e3} and~\e_ref{hmap_e2b} are well-defined and satisfy
$$\hb_{\#}(c')-c'=\prt_X\big(\wt\hb_{\#}(c')\!\big)\in  \Scl_k(X;\Z)
\qquad\forall~c'\!\in\! \Scl_k(X;\Z).$$
In particular,
\BE{isom_l1e3}
\sum_{i\in\cI}\si_i\!\circ\!\vp_k-\sum_{i\in\cI}\si_i
\equiv \hb_{\#}(c)\!-\!c\in\prt \Scl_{k+1}(X;\Z).\EE

\vspace{.2in}

\noindent
Let $\pi$ be the quotient map of the proof of Lemma~\ref{constr1_lmm} and
$U\!\subset\!X$ be a neighborhood of $\Bd F|_M$ as in the proof of Lemma~\ref{euler}.
Choose a manifold with boundary $\ov{V}\!\subset\!M$ containing~\hbox{$M\!-\!F^{-1}(U)$} 
as in the latter proof so that $(\ov{V},\prt\ov{V})$ admits a triangulation $T\!\equiv\!(K,\eta)$ 
with each $k$-simplex of~$T$ contained in $\pi(\{i\}\!\times\!\De^k)$ for some $i\!\in\!\cI$.
Let
$$K^{\top}=\big\{\si\!: \dim\si\!=\!k\big\}.$$
For each $\si\!\in\!K^{\top}$, choose a linear map 
\BE{isom_l1e4}l_{\si}\!:\De^k\lra \si\subset|K|\EE
so that the map $\eta\!\circ\!l_{\si}\!:\De^k\!\lra\!M$ is orientation-preserving.
For each $i\!\in\!\cI$, let
$$K_i=\big\{\si\!\in\!K\!: \eta(\si)\!\subset\!\pi(\{i\}\!\times\!\De^k)\!\big\},
\qquad
K_i^{\top}=\big\{\si\!\in\!K_i\!: \dim\si\!=\!k\big\}.$$
Let $\wt{T}_i\!\equiv\!(\wt{K}_i,\wt\eta_i)$ be a triangulation of a subset of
$\{i\}\!\times\!\De^k$ that along with~$K_i$ gives a triangulation of~$\{i\}\!\times\!\De^k$. 
Put 
$$\ti{K}_i^{\top}=\big\{\si\!\in\!\ti{K}_i\!: \dim\si\!=\!k\big\}.$$
By definition of~$T$ and~$F$,
\BE{isom_l1e5}
\wt\eta_i(\si)\subset F^{-1}(U), \quad
\big\{\si_i\!\circ\!\vp_k\big\}\!\big(\wt\eta_i(\si)\!\big)
\subset U \qquad\forall~\si\!\in\!\wt{K}_i^{\top},~i\!\in\!\cI.\EE
Furthermore, by~\e_ref{isom_l1e3}
\BE{isom_l1e7}\begin{split}
\{c\}&=\sum_{i\in\cI}\big\{\si_i\!\circ\!\vp_k\big\}
=\sum_{i\in\cI}\sum_{\si\in K_i^{\top}}
\!\!\!\!\!\big\{\!\si_i\!\circ\!\vp_k\!\circ\!\eta\!\circ\! l_{\si}\!\big\}
+\sum_{i\in\cI}\sum_{\si\in\ti{K}_i^{\top}}
\!\!\!\!\!\big\{\!\si_i\!\circ\!\vp_k\!\circ\!\ti\eta_i\!\circ\!l_{\si}\!\big\}
\in\Hcl_k(X;\Z);
\end{split}\EE
the second equality above holds because subdivisions of cycles do not change the homology class.
By the proof of Lemma~\ref{euler}, the first sum on the right-hand side 
of~\e_ref{isom_l1e7} represents the image~$[F|_M]_{X;U}$
of~$[F|_M]$ under the isomorphism~\e_ref{euler_e2}.
By~\e_ref{isom_l1e5},  the second sum lies in $\ovScl_{\{U\};k}(X;\Z)$.
Since the sum of these two sums represents a cycle in $\ovScl_k(X)$,
it must represent~$[F|_M]$ in~$\ovScl_k(X;\Z)$. 
This gives~\e_ref{isoml1_e1}.
\end{proof}

\begin{lmm}\label{isom_l2}
Let $X$ be a manifold and $k\!\in\!\Z^{\ge0}$.
Suppose \hbox{$f\!:M\!\lra\!X$} is a Borel-Moore $k$-pseudocycle such that 
the homology class~$[f]$ provided by Lemma~\ref{euler} vanishes.
Then $f$ represents the zero element of~$\cHcl_k(X)$.
\end{lmm}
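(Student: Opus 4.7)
\emph{Proof plan.} The strategy parallels the injectivity argument for ordinary pseudocycles in~\cite{Z}: realize $f$ (up to Borel-Moore pseudocycle equivalence) as the pseudocycle associated to a cycle $c\in\ovScl_k(X;\Z)$, then use the vanishing of $[f]$ together with Lemma~\ref{equiv1_lmm} to produce an equivalence from that pseudocycle to the empty one.

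Fix the open neighborhood $U\!\subset\!X$ of $\Bd f$ and the closed $k$-submanifold with boundary $\ov V\!\subset\!M$ as in the proof of Lemma~\ref{euler}, so that $f|_{\ov V}$ is proper, $f(M\!-\!\Int\ov V)\!\subset\!U$, and $\ovHcl_{\{U\};l}(X;\Z)\!=\!0$ for every $l\!>\!k\!-\!2$. Choose a triangulation $T\!=\!(K,\eta)$ of $\ov V$ restricting to one of $\prt\ov V$, and form the locally finite chain $c_V=\sum_{\si\in K^{\top}}f\!\circ\!\eta\!\circ\!l_\si\in\Scl_k(X;\Z)$ with $l_\si$ as in~\e_ref{lsidfn_e}. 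The boundary $\prt_Xc_V$ lies in $\Scl_{\{U\};k-1}(X;\Z)$, and the vanishing of $\ovHcl_{\{U\};k-1}(X;\Z)$ provides $d\!\in\!\Scl_{\{U\};k}(X;\Z)$ so that $c\!\equiv\!c_V\!-\!d$ is a cycle in $\ovScl_k(X;\Z)$ whose image in $\ovScl_k(X,\{U\};\Z)$ still represents $[f]_{X;U}$; by the isomorphism~\e_ref{euler_e2}, $[c]\!=\![f]\!=\!0$ in $\ovHcl_k(X;\Z)$. Let $F\colon M_c\!\to\!X$ be the Borel-Moore pseudocycle produced from $c$ by Lemma~\ref{constr1_lmm}, pick $\wt c\!\in\!\ovScl_{k+1}(X;\Z)$ with $\ov\prt_X\wt c\!=\!\{c\}$, and apply Lemma~\ref{equiv1_lmm} to $(c,0,\wt c)$ to obtain a Borel-Moore pseudocycle equivalence between $F$ and the empty pseudocycle.

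It remains to construct a Borel-Moore pseudocycle equivalence between $f$ and $F$. The portion of $c$ coming from $c_V$ glues, after the smoothing $\vp_k$, into a proper map that coincides with $f|_{\Int\ov V}$ off the codimension-$2$ skeleton of $T$, while the portion from $-d$ contributes a part of $M_c$ whose image lies entirely in $U$. I would build $\wt M$ as a cobordism whose matching, triangulated piece is the collar $[0,1]\!\times\!(\Int\ov V\!\setminus\!\textnormal{skel})$ attached to the corresponding parts of $M$ and $M_c$, and whose remaining, entirely $U$-valued pieces from $M\!-\!\Int\ov V$ and from the $-d$ summands of $M_c$ are joined by an auxiliary $(k+1)$-dimensional simplicial manifold in $U$. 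The codimension-$1$ singularities of the resulting complex are absorbed into the exceptional subsets $Y_0\!\subset\!M$ and $Y_1\!\subset\!M_c$ permitted by Definition~\ref{BMpseudo_dfn}\ref{BMequiv_it}, and the map is smoothed off a codimension-$2$ stratum via Lemma~\ref{bdpush_lmm}; the boundary locus $\Bd\wt f$ is then contained in $\Bd f\!\cup\!\Bd F$ and has dimension at most $k\!-\!2$. Composing this equivalence with the earlier one between $F$ and the empty pseudocycle shows that $f$ represents~$0$ in~$\cHcl_k(X)$.

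The hard part is the construction of the auxiliary $(k\!+\!1)$-dimensional cobordism in $U$: since $d$ is produced purely homologically, its simplicial realization in $M_c$ has no a priori geometric relation to $M\!-\!\Int\ov V$, and bridging the two requires a $(k+1)$-chain in $U$ whose combinatorial boundary matches the prescribed $k$-chains on both sides. The vanishings $\ovHcl_{\{U\};k+1}(X;\Z)\!=\!\ovHcl_{\{U\};k}(X;\Z)\!=\!0$ (both free consequences of the choice of $U$) guarantee the existence of such a bounding chain, but realizing it as a smooth cobordism with controlled $\Bd$-locus requires the analogue of the combinatorial gluing-and-smoothing procedure of Lemmas~\ref{gluing_l2} and~\ref{bdpush_lmm} carried out one dimension higher, together with the verification that the resulting codimension-$1$ exceptional locus fits inside the allowable subsets $Y_0,Y_1$.
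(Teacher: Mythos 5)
Your first two steps are sound: producing an absolute cycle $c=c_V\!-\!d$ with $[c]\!=\![f]\!=\!0$ and applying Lemma~\ref{equiv1_lmm} to a bounding chain $\wt{c}$ does give a Borel-Moore pseudocycle equivalence between the reconstructed pseudocycle $F$ and the empty one. But the step you defer --- the equivalence between $f$ and $F$ --- is the actual content of the lemma, and the mechanism you propose for it breaks down. Your bridge is supposed to be a $(k\!+\!1)$-chain in $U$ whose boundary matches, on one side, a $k$-chain representing $f$ on $M\!-\!\Int\ov{V}$. When $\Bd f\!\neq\!\eset$ no such chain exists: $f|_{M-\Int\ov{V}}$ is not proper, so for any triangulation of $M\!-\!\Int\ov{V}$ the pushforward has infinitely many simplices whose images meet every neighborhood of each point of $\Bd f$; it is therefore not locally finite in~$X$, hence not an element of $\Scl_{\{U\};k}(X;\Z)$, and the vanishing $\ovHcl_{\{U\};k}(X;\Z)\!=\!0$ cannot be applied to it (this is the same reason $f_*[M]$ is not directly defined for a non-proper pseudocycle). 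A single neighborhood $U$ and a single homologically produced bounding chain therefore cannot absorb the whole end of~$M$ into $\prt\wt{M}$, which Definition~\ref{BMpseudo_dfn}\ref{BMequiv_it} requires, since only a closed subset of dimension at most $k\!-\!1$ of $M$ may be left out of the boundary.

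The paper's proof is built precisely to get around this: it takes a nested sequence of neighborhoods $U_r$ of $\Bd f$ with $\bigcap_r U_r\!=\!\Bd f$ and $\Hcl_{\{U_r\};l}(X;\Z)\!=\!0$ for $l\!>\!k\!-\!2$, an exhaustion of $M$ by submanifolds $\ov{V}_r$, and then, using $[f]\!=\!0$, an inductively compatible family of bounding chains $\ti{c}_r\!\in\!\ovScl_{\{U_{r-1}\};k+1}(X;\Z)$ whose boundaries account for the triangulation simplices of $M$ lying in $\ov{V}_r$ but not $\ov{V}_{r-1}$. Gluing this infinite, locally finite family simplex-by-simplex (an infinite version of Lemma~\ref{gluing_l2}, smoothed via Lemma~\ref{bdpush_lmm}) yields a single cobordism containing $M\!-\!Y_f$ in its boundary with $\wt{f}\!\circ\!\io_f\!=\!f|_{M-Y_f}$ and $\Bd\wt{f}\!\subset\!\bigcap_r\ov{U}_r\!=\!\Bd f$, even though $\wt{f}$ is not proper and $\sum_r\ti{c}_r$ is not itself a Borel-Moore chain. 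This gives an equivalence from $f$ directly to $\eset$, so the paper never has to compare $f$ with $F$ (note, incidentally, that on the $c_V$-part $F$ is $f$ precomposed with the collapsing reparametrization $\vp_k$, not $f$ itself) nor to compose two equivalences. Any repair of your outline would in effect have to reproduce this shrinking-neighborhood, infinitely-many-bounding-chains construction, so the flagged ``hard part'' is not a routine one-dimension-higher repetition of Lemmas~\ref{gluing_l2} and~\ref{bdpush_lmm} but the genuine core of the argument.
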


\begin{proof}
The case $k\!=\!0$ is straightforward and very similar to the $k\!=\!0$ case of 
the proof of Lemma~\ref{equiv1_lmm}.
Thus, we assume that $k\!\ge\!1$.
By Example~\ref{BMpseudo_eg}, we can also assume that $f^{-1}(\Bd f)\!=\!\eset$.\\

\noindent
By the first countability of the topology of~$X$ and Proposition~\ref{neighb2_prp},
there exists a sequence~$\{U_r\}_{r\in\Z^+}$ of open neighborhoods of~$\Bd f$ in~$X$
such that 
\BE{isoml2_e2}U_r\supset\ov{U}_{r+1}~~\forall\,r\!\in\!\Z^+, 
\qquad \bigcap_{r=1}^{\i}\!U_r=\Bd f,
\quad\hbox{and}\quad
\Hcl_{\{U_r\};l}(X;\Z)=0 \quad\forall~l>k\!-\!2.\EE
By the first condition above, the closed subset $M\!-\!f^{-1}(U_r)\!\subset\!M$ 
is contained in the open subset \hbox{$M\!-\!f^{-1}(\ov{U}_{r+1})$}.
Thus, we can choose submanifolds with boundary $\ov{V}_r\!\subset\!M$ as
in the proof of Lemma~\ref{euler} so~that 
$$M\!-\!f^{-1}(U_r)\subset V_r\subset \ov{V}_r\subset M\!-\!f^{-1}(\ov{U}_{r+1})
\quad \forall\,r\!\in\!\Z^+.$$
By the second condition in~\e_ref{isoml2_e2}, 
$$\bigcup_{r=1}^{\i}\!V_r\supset 
\bigcup_{r=1}^{\i}\!\big(M\!-\!f^{-1}(U_r)\!\big)
=M\!-\!f^{-1}(\Bd f)=M,$$
i.e.~the open collection $\{V_r\}_{r\in\Z^+}$ covers~$M$.\\

\noindent
Choose a triangulation $T\!=\!(K,\eta)$ of~$M$ that extends triangulations
of all~$\prt\ov{V}_r$ (which are pairwise disjoint).
Let
$$K^{\top}=\big\{\si\!\in\!K\!:\dim\si\!=\!k\big\}, \qquad
\cB_{\eta}=\big\{\!(\si,p)\!:\si\!\in\!K^{\top},\,p\!=\!0,1,\ldots,k\big\}.$$
For each $\si\!\in\!K^{\top}$, let $l_{\si}$ be as in~\e_ref{isom_l1e4}.
Put
\begin{gather}\label{isom2_e6}
f_{\si}=f\!\circ\!\eta\!\circ\!l_{\si}\!:\De^k\lra X
\quad\forall~\si\!\in\!K^{\top} \qquad\hbox{and}\\
\notag
\D_{\eta}=\big\{\!\big(\!(\si_1,p_1),(\si_2,p_2)\!\big)
\!\in\!\cB_{\eta}\!\times\!\cB_{\eta}\!:
(\si_1,p_1)\!\neq\!(\si_2,p_2),~
l_{{\si}_1}(\De^k_{p_1})\!=\!l_{{\si}_2}(\De^k_{p_2})\!\subset\!|K|\big\}.
\end{gather}
For each $(\!(\si_1,p_1),(\si_2,p_2)\!)\!\in\!\D_{\eta}$, define
$$\tau_{(\si_1,p_1),(\si_2,p_2)}\in\cS_{k-1} \qquad\hbox{by}\qquad
l_{\si_1}\!\circ\!\io_{k;p_1}\!\circ\!\tau_{(\si_1,p_1),(\si_2,p_2)}
=l_{\si_2}\!\circ\!\io_{k;p_2}.$$
Since $M$ is an oriented manifold and by the definition of $l_{\si_i}$,
$$\D_{\eta}\subset\cB_{\eta}\!\times\!\cB_{\eta} \qquad\hbox{and}\qquad
\tau\!:\D_{\eta}\lra{\cal S}_{k-1}$$
satisfy (i)-(iii) of Lemma~\ref{gluing_l1} with the subscript $c\!=\!\eta$ 
and the maps~$\si$ replaced by~$f_{\si}$.
Furthermore, the geometric realization~$|K|$ of~$K$ is the topological space~\e_ref{Mprdfn_e} 
with $(\cI,c)\!=\!(K^{\top},\eta)$ and
$$f\!\circ\!\eta\!\circ\!\pi|_{\si\times\De^k}=f_{\si} \qquad\forall~\si\!\in\!K^{\top},$$
where $\pi$ is the quotient map as in the proof of Lemma~\ref{constr1_lmm}.\\

\noindent
For each $r\!\in\!\Z^+$, let
$$K_r^{\top}=\big\{\si\!\in\!K^{\top}\!: \eta(\si)\!\subset\!\bar{V}_r\big\},
\quad \cB_{\eta;r}=\big\{\!(\si,p)\!\in\!\cB_{\eta}:\si\!\in\!K_r^{\top}\big\},
\quad
\D_{\eta;r}=\D_{\eta}\!\cap\!(\cB_{\eta;r}\!\times\!\cB_{\eta;r}).$$
By the construction of $[f]$ in the proof of Lemma~\ref{constr1_lmm}
and by the last paragraph of Section~\ref{OrientedBM_subs}, 
there exists a Borel-Moore singular chain 
$$c_r\equiv\sum_{i\in\cI_r}\!f_{r;i}\in \Scl_{\{U_r\};k}(X;\Z)$$ 
such that
\BE{crcond_e}\sum_{\si\in K_r^{\top}}\!\!\!\!\big\{f_{\si}\big\}
+\{c_r\}\in\ovScl_k(X;\Z)\EE
is a cycle representing~$[f]$. 
Similarly to Lemma~\ref{gluing_l1}, there exist a symmetric subset 
$$\D_r\subset(\cB_{\eta;r}\!\sqcup\!\cB_{c_r}) 
\!\times\! (\cB_{\eta;r}\!\sqcup\!\cB_{c_r})$$
disjoint from the diagonal and a map
$$\tau_r\!: \D_r\lra\cS_{k-1}$$ 
such that
\begin{enumerate}[label=(\roman*),leftmargin=*]

\item $\D_{\eta;r}\!\subset\!\D_r$ and  $\tau_r|_{\D_{\eta;r}}\!=\!\tau|_{\D_{\eta;r}}$;

\item the projection map $\D_r\!\lra\!\cB_{\eta;r}\!\sqcup\!\cB_{c_r}$
on either coordinate is a bijection;

\item for all $(\!(i_1,p_1),(i_2,p_2)\!)\!\in\!\D_r$, 
\begin{gather*}
\tau_{r;(i_1,p_1),(i_2,p_2)}^{~-1}=\tau_{r;(i_2,p_2),(i_1,p_1)}, \qquad
f_{r;i_1}\!\circ\!\io_{k;p_1}\!\circ\!\tau_{(i_1,p_1),(i_2,p_2)}
=f_{r;i_2}\!\circ\!\io_{k;p_2},\\
\hbox{and}\qquad \sign\tau_{r;(i_1,p_1),(i_2,p_2)}=-(-1)^{p_1+p_2},
\end{gather*}
where $f_{r;\si}\!\equiv\!f_{\si}$ for all $\si\!\in\!K_r^{\top}$.

\end{enumerate}

\vspace{.2in}

\noindent
Since every Borel-Moore singular chain~\e_ref{crcond_e} is a cycle,
$$\sum_{\si\in K_r^{\top}-K_{r-1}^{\top}}\hspace{-.3in}
\big\{f_{\si}\big\}+\!\{c_r\}-\{c_{r-1}\}  \in \ovScl_{\{U_{r-1}\};k}(X;\Z)$$
is a cycle as well. 
By the third condition in~\e_ref{isoml2_e2}, this cycle is a boundary.
Since $[f]\!=\!0$ by assumption, this conclusion also holds for $r\!=\!1$
with $U_0\!\equiv\!X$, $K_0^{\top}\!=\!\eset$, and $c_0\!=\!0$.
Let
\BE{isom2_e10}\ti{c}_r\equiv\sum_{i\in\wt\cI_r}\ti{f}_{r;i}\in \ovScl_{\{U_{r-1}\};k+1}(X;\Z)\EE
be a Borel-Moore singular chain such that 
$$\sum_{\si\in K_r^{\top}-K_{r-1}^{\top}}\hspace{-.3in}
\big\{f_{\si}\big\}+\!\{c_r\}-\{c_{r-1}\}
=\ov\prt_X\big\{\ti{c}_r\big\}\in \ovScl_{\{U_{r-1}\};k}(X;\Z).$$
Summing this equation with $r$ replaced by~$r'$ from~1 to~$r$, we obtain
\BE{isom2_e11}\sum_{\si\in K_r^{\top}}\!\!\!
\big\{f_{\si}\big\}+\!\{c_r\}
=\ov\prt_X\sum_{r'=1}^r\!\!\big\{\ti{c}_{r'}\big\}\in \ovScl_k(X;\Z)
\quad\forall\,r\!\in\!\Z^+.\EE

\vspace{.2in}

\noindent
Similarly to Lemma~\ref{gluing_l2}, \e_ref{isom2_e11} implies that 
there exist a subset
$$\wt\cB_r^f\subset
\wt\cB_r\!\equiv\!\bigsqcup_{r'=1}^r\!\!\cB_{\ti{c}_{r'}}\,,$$
a symmetric subset $\ti\D_r\!\subset\!\ti\cB_r\!\times\!\ti\cB_r$ disjoint from 
the diagonal, and maps
\begin{gather*}
\ti\tau_r\!: \ti\D_r\lra \cS_k, \quad 
\big(\!(i_1,p_1),(i_2,p_2)\!\big)\lra\ti\tau_{r;((i_1,p_1),(i_2,p_2))},\\
(\ti\io_r,\ti{p}_r)\!:K_r^{\top}\!\sqcup\!\cI_r\lra\ti\cB_r^f,
~~~\hbox{and}~~~
\ti\tau_r\!:K_r^{\top}\!\sqcup\!\cI_r\lra\cS_k, ~~i\lra\ti\tau_{(r,i)}, 
\end{gather*}
such that
\begin{enumerate}[label=(\roman*),leftmargin=*]

\item $\ti\D_{r-1}\!\subset\!\ti\D_r$, 
$\ti\tau_r|_{\ti\D_{r-1}}\!=\!\ti\tau_{r-1}$,
and $(\ti\io_r,\ti{p}_r,\ti\tau_r)|_{K_{r-1}^{\top}}=
(\ti\io_{r-1},\ti{p}_{r-1},\ti\tau_{r-1})|_{K_{r-1}^{\top}}$
if $r\!\ge\!2$;

\item the projection $\ti\D_r\!\lra\!\ti\cB_r$  on either coordinate
is a bijection onto the complement of~$\ti\cB_r^f$;

\item for all $(\!(i_1,p_1),(i_2,p_2)\!)\!\in\!\ti\D_r\!\cap\!
(\cB_{\ti{c}_{r_1}}\!\!\times\!\cB_{\ti{c}_{r_2}})$ with $r_1,r_2\!\in\![r]$,
\BE{isom2_e15}\begin{split}
&\ti\tau_{r;((i_1,p_1),(i_2,p_2))}^{~-1}=\ti\tau_{r;((i_2,p_2),(i_1,p_1))}, \quad
\ti{f}_{r_1;i_1}\!\circ\!\io_{k+1;p_1}\!\circ\!\ti\tau_{r;((i_1,p_1),(i_2,p_2))}
=\ti{f}_{r_2;i_2}\!\circ\!\io_{k+1;p_2},\\
&\hspace{1.2in}\hbox{and}\qquad \sign\ti\tau_{r;((i_1,p_1),(i_2,p_2))}=-(-1)^{p_1+p_2};
\end{split}\EE

\item for all $\si\!\in\!K_r^{\top}\!-\!K_{r-1}^{\top}$, 
\BE{isom2_e17}
\ti{f}_{r;\ti\io_r(\si)}\!\circ\!\io_{k+1;\ti{p}_r(\si)}\!\circ\!\ti\tau_{(r,\si)}=f_{\si}
\qquad\hbox{and}\qquad \sign\ti\tau_{(r,i)}=-(-1)^{\ti{p}_r(\si)};\EE

\item $(\ti\io_r,\ti{p}_r)$ is a bijection onto $\ti\cB_r^f$.

\end{enumerate}

\vspace{.2in}

\noindent
Put 
\begin{gather*}
\ti{M}'=\Big(\bigsqcup_{r=1}^{\i}\bigsqcup_{i\in\wt\cI_r}\!
\{r\}\!\times\!\{i\}\!\times\!\De^{k+1}\Big)\!\!\Big/\!\!\sim, 
\qquad\hbox{where}\\
\begin{split}
&\big(r_1,i_1,\io_{k+1;p_1}(\ti\tau_{r;((i_1,p_1),(i_2,p_2))}(t)\!)\!\big)
\sim \big(r_2,i_2,\io_{k+1;p_2}(t)\!\big)\\
&\hspace{1in}\forall~\big(\!(i_1,p_1),(i_2,p_2)\!\big)\!\in\!\ti\D_r\!\cap\!
(\cB_{\ti{c}_{r_1}}\!\!\times\!\cB_{\ti{c}_{r_2}}),\,
r_1,r_2,r\!\in\!\Z^+,\,t\!\in\!\De^k.
\end{split}\end{gather*}
Let $\ti\pi$ be the quotient map.
Define 
$$\ti{f}\!:\ti{M}'\lra X, \qquad
\ti{f}\big([r,i,t]\big)=\ti{f}_{r;i}\big(\vp_{k+1}(t)\!\big)
\quad\forall~t\!\in\!\De^{k+1},~i\!\in\!\wt\cI_r,~r\!\in\!\Z^+,$$
where $\vp_{k+1}$ is the self-map of $\De^{k+1}$ provided by Lemma~\ref{bdpush_lmm}.
Since~$\vp_{k+1}$ restricts to the identity on~$\prt\De^{k+1}$,
the map~$\ti{f}$ is well-defined by the second condition in~\e_ref{isom2_e15}
and continuous by the universal property of the quotient topology.
Similarly to the proof of Lemma~\ref{constr1_lmm}, the restriction of~$\wt{f}$ to
$$\wt\pi\bigg(\bigsqcup_{i\in\wt\cI_r}\!\{r\}\!\times\!\{i\}\!\times\!\De^{k+1}\bigg)
\subset\wt{M}'$$
is proper for every $r\!\in\!\Z^+$.
By~\e_ref{isom2_e10}, \hbox{$\wt{f}_{r';i}(\De^{k+1})\!\subset\!U_r$}  
for all $r'\!>\!r$.
Thus, 
\BE{isom2_e21}\Bd\wt{f}\subset  \bigcap_{r=1}^{\i}\!\ov{U}_r=\Bd f\,.\EE
 
\vspace{.2in}

\noindent
Let $\ti{M}\!\subset\!\wt{M}'$ be the complement of the subset
$$\ti\pi\Big(\bigsqcup_{r=0}^{\i}
\bigsqcup_{i\in\wt\cI_r}\!\{r\}\!\times\!\{i\}\!\times\!\ti{Y}\Big)
\subset\ti{M}',$$
where $\ti{Y}\!\subset\!\De^{k+1}$ is the $(k\!-\!1)$-skeleton as before.
By~Lemma~\ref{proper_lmm}\ref{Bdext_it} and~\e_ref{isom2_e21},
\BE{isom2_e25}\Bd\wt{f}|_{\wt{M}}\subset \big(\Bd\wt{f}\big)\cup
\bigcup_{r=1}^{\i}\bigcup_{i\in\wt\cI_r}\!\!\wt{f}_{r;i}\big(\vp_{k+1}(\ti{Y})\!\big)
\subset\big(\Bd f\big)\cup
\bigcup_{r=1}^{\i}\bigcup_{i\in\wt\cI_r}\!\!\wt{f}_{r;i}(\ti{Y}).\EE
Since $\wt{f}_{r;i}|_{\Int\De'}$ is smooth for all $i\!\in\!\wt\cI_r$, $r\!\in\!\Z^+$, 
and  all simplices $\De'\!\subset\!\De^{k+1}$,
$\Bd\ti{f}|_{\ti{M}}$ has dimension at most $k\!-\!1$ by~\e_ref{isom2_e25}.\\

\noindent
Let $Y_f\!\subset\!M$ denote the image
of the $(k\!-\!1)$-skeleton of~$|K|$ under~$\eta$.
The~map
\begin{gather*}
\io_f\!:M\!-\!Y_f\lra\wt{M}, \\
\io_f\big(\eta(l_{\si}(t)\!)\!\big)
=\big[r,\wt\io_r(\si),\io_{k+1;\wt{p}_r(\si)}\big(\wt\tau_{(r,\si)}(t)\!\big)\big]
\quad\forall~\si\!\in\!K_r^{\top}\!-\!K_{r-1}^{\top},\,r\!\in\!\Z^+,\,t\!\in\!\Int\De^k,
\end{gather*}
is a well-defined embedding.
By the first condition in~\e_ref{isom2_e17} and~\e_ref{isom2_e6},
$$\wt{f}\!\circ\!\io_f=f|_{M-Y_f}\,.$$
Thus, $\ti{f}|_{\ti{M}}$ is a Borel-Moore pseudocycle equivalence between
the Borel-Moore $k$-pseudocycles~$f$ and $\eset$, 
provided $\ti{M}$ is an oriented manifold,  $\ti{f}|_{\ti{M}}$ is a smooth~map,
$\io$ is a smooth embedding, and
$$\prt\wt{M}=\io_f(M\!-\!Y_f).$$
These are again straightforward local statements, 
which are established as in~(3) in the proof of \cite[Lemma~3.3]{Z}.
\end{proof}


\noindent
{\it Department of Mathematics, Stony Brook University, Stony Brook, NY 11790-3651\\
spencer.cattalani@stonybrook.edu, azinger@math.stonybrook.edu}

\end{document}